 % SIAM Article Template
\documentclass[hidelinks,onefignum,onetabnum]{siamart250211}

% Information that is shared between the article and the supplement
% (title and author information, macros, packages, etc.) goes into
% ex_shared.tex. If there is no supplement, this file can be included
% directly.

% SIAM Shared Information Template
% This is information that is shared between the main document and any
% supplement. If no supplement is required, then this information can
% be included directly in the main document.

% Packages and macros go here
% \usepackage{fullpage} % use 1 inch margins instead of default
\usepackage{listings}
\usepackage{adjustbox}
% % Figures
% \usepackage{wrapfig}

% % Colors
% \usepackage{graphicx}
% \usepackage[dvipsnames]{xcolor}

% % Hyperrefs
% \usepackage{url}
% \usepackage{hyperref} 
% % For autorefs and hyperlinks
\usepackage[numbers]{natbib}

% Algorithms
\usepackage{algorithm}
\usepackage{algpseudocodex} % https://ctan.math.washington.edu/tex-archive/macros/latex/contrib/algpseudocodex/algpseudocodex.pdf
\tikzset{algpxIndentLine/.style={color=gray,dashed}}
\algrenewcommand\algorithmicrequire{\textbf{Initialize}}
\algrenewcommand\algorithmicloop{\textbf{parallel}}

% % Math
\usepackage{mathtools} % Automatically loads amsmath
\usepackage{amsmath, amssymb, amscd, amsfonts} % Other AMS packages
\usepackage{thmtools}
\usepackage{thm-restate} % So that theorems can be restated in appendix with proofs
% \usepackage{bm} % bold math
% \usepackage{bbm}
% \usepackage{adjustbox}

% % Random
% \usepackage{setspace}
% \usepackage{multirow}
% \usepackage{enumitem} % left lists.
% \usepackage[normalem]{ulem}
% \usepackage{xspace} %  
\usepackage{cleveref}
\usepackage{comment}

% % Todo List
\usepackage{enumitem,amssymb}
\newlist{todolist}{itemize}{2}
\setlist[todolist]{label=$\square$} % Load libraries
\input{YaoJi-macros} % Load math macros

\definecolor{darkred}{RGB}{120,0,0}
\definecolor{dg}{RGB}{0,120,0}
\definecolor{darkblue}{RGB}{0,0,120}
\definecolor{darkgreen}{RGB}{0,120,0}

% \usepackage{lipsum}
% \usepackage{amsfonts}
% \usepackage{graphicx}
% \usepackage{epstopdf}
% \usepackage{algorithmic}
% \ifpdf
%   \DeclareGraphicsExtensions{.eps,.pdf,.png,.jpg}
% \else
%   \DeclareGraphicsExtensions{.eps}
% \fi

% Add a serial/Oxford comma by default.
% \newcommand{\creflastconjunction}{, and~}

% Used for creating new theorem and remark environments
% \newsiamremark{remark}{Remark}
% \newsiamremark{hypothesis}{Hypothesis}
% \crefname{hypothesis}{Hypothesis}{Hypotheses}
% \newsiamthm{claim}{Claim}
% \newsiamremark{fact}{Fact}
% \crefname{fact}{Fact}{Facts}

% Sets running headers as well as PDF title and authors
% \headers{An Example Article}{D. Doe, P. T. Frank, and J. E. Smith}
\headers{Gradient Minimization Using High-Order Methods}{Yao Ji and Guanghui Lan}

% Title. If the supplement option is on, then "Supplementary Material"
% is automatically inserted before the title.
\title{High-order Accumulative Regularization for Gradient Minimization in Convex Programming
\thanks{This work is partially supported by Air Force Office of Scientific Research grant FA9550-22-1-0447 and American Heart Association grant 23CSA1052735.
}
}

% Authors: full names plus addresses.
\author{Yao Ji\thanks{H. Milton Stewart School of Industrial and Systems Engineering, Georgia Institute of Technology, Atlanta, GA 30332 USA (\email{yaoji@gatech.edu}).}
\and Guanghui Lan\thanks{H. Milton Stewart School of Industrial and Systems Engineering, Georgia Institute of Technology, Atlanta, GA 30332 USA (\email{george.lan@isye.gatech.edu}).}}

%%% Local Variables: 
%%% mode:latex
%%% TeX-master: "ex_article"
%%% End: 

% Optional PDF information
\ifpdf
\hypersetup{
  pdftitle={HIGH-ORDER ACCUMULATIVE REGULARIZATION GRADIENT MINIMIZATION FOR CONVEX PROGRAMMING},
  pdfauthor={Yao Ji and Guanghui Lan}
}
\fi

% The next statement enables references to information in the
% supplement. See the xr-hyperref package for details.

% \externaldocument[][nocite]{ex_supplement}

% FundRef data to be entered by SIAM
%<funding-group specific-use="FundRef">
%<award-group>
%<funding-source>
%<named-content content-type="funder-name"> 
%</named-content> 
%<named-content content-type="funder-identifier"> 
%</named-content>
%</funding-source>
%<award-id> </award-id>
%</award-group>
%</funding-group>

\begin{document}
\maketitle
\begin{abstract}
This paper develops a unified high-order accumulative regularization (AR) framework for convex and uniformly convex gradient norm minimization.
Existing high-order methods often exhibit a gap: the function-value residual decreases fast, while the gradient norm converges much slower. To close this gap, we introduce AR that systematically transforms the fast function-value residual convergence rate into a fast (matching) gradient norm convergence rate.

Specifically, 
for composite convex problems, to compute  an approximate
solution such that the norm of its (sub)gradient does not exceed $\varepsilon,$
the proposed AR methods match the best corresponding convergence rate for the function-value residual. 
We further extend the framework to uniformly convex settings, establishing linear, superlinear, and sublinear convergence of the gradient norm under different lower curvature conditions.
Moreover, we design parameter-free algorithms that require no input of problem parameters, e.g., the Lipschitz constant of the $p$-th-order gradient, the initial optimality gap and the uniform convexity parameter, and allow an inexact solution for each high-order step. To the best of our knowledge, no parameter-free methods can attain such a fast gradient norm convergence rate which  matches that of the function-value residual in the convex case, and no such parameter-free methods for uniformly convex problems exist.
These results substantially generalize existing parameter-free and inexact high-order methods and recover first-order algorithms as special cases, providing a unified approach for fast gradient minimization across a broad range of smoothness and curvature regimes.

\end{abstract}
\begin{keywords}
Smooth optimization, high-order methods, gradient norm minimization, Hölder condition,
convex
optimization, uniform convexity, parameter-free
\end{keywords}

\section{Introduction}
In this paper, we first consider the following unconstrained convex optimization problem: 
\begin{equation}\label{eqn:main}
    \min_{x \in \mathbb{R}^n} f(x),
\end{equation}
where $f$ is proper and closed.  
We assume  
that $f(\cdot)$ is $p$-times differentiable with a Lipschitz continuous $p$-th derivative:
\begin{align}\label{eqn:smoothness-p}
    \|D^p f(x) - D^p f(y)\|   \leq L_{p+1}(f) \|x - y\|,\quad\forall x, y\in \mathbb{R}^n,
\end{align}
where $ \|D^p f(x)-D^p f(y)\|=\max\limits_{h\in \mathbb{R}^n}\{|D^p f(x)[h]^p-D^p f(y)[h]^p|:\,\|h\|\leq1\}.$ 
We then extend the analysis to composite and  uniformly convex problems.

The last decade has witnessed an increasing interest in the design and analysis of high-order methods for convex programming.
High-order methods exploit high-order derivative information to achieve faster convergence compared to first-order methods. 
Nesterov and Polyak
established the first global worst-case complexity analysis for the second-order schemes by designing a cubic regularization of Newton method (CNM) \cite{nesterov2006cubic}. They showed that CNM takes at most $\mathcal{O}(\varepsilon^{-1/2})$ iterations to reduce the functional  
residual below a given precision $\varepsilon,$ i.e., $ f(x)-f^*\leq \varepsilon,$ where $f$ is a twice-differentiable  
convex function with a Lipschitz continuous Hessian. Later, Nesterov obtained an improved complexity bound of $\mathcal{O}(\varepsilon^{-1/3})$ by developing an accelerated version of CNM \cite{nesterov2008accelerating}.
After that, Monteiro and Svaiter \cite{monteiro2013accelerated} derived an improved iteration complexity of $\mathcal{O}(\varepsilon^{-2/7}\log \varepsilon^{-1})$ through an accelerated Newton proximal extragradient (ANPE).
Later, a lower bound of $\Omega(\varepsilon^{-2/7})$ for second-order methods was established \cite{arjevani2019oracle}, which implies that ANPE \cite{monteiro2013accelerated} is optimal up to a logarithmic factor.
When it comes to $p>2$, the $p$-th-order tensor method and its accelerated version are developed for convex functions with Lipschitz continuous $p$-th derivatives \eqref{eqn:smoothness-p} in \cite{baes2009estimate, nesterov2021implementable}.
Specifically, to achieve $f(x)-f^*\leq \varepsilon,$ the $p$-th-order tensor method takes up to $\mathcal{O}(\varepsilon^{-1/p})$ iterations, while its accelerated version only requires $\mathcal{O}(\varepsilon^{-1/(p+1)})$ iterations \cite{nesterov2021implementable}.
Subsequent methods improved the iteration complexity to $\mathcal{O}(\varepsilon^{-2/(3p+1)}\log \varepsilon^{-1})$, such as those built upon the ANPE method \cite{jiang2021optimal, bubeck2019near, gasnikov2019optimal}, and the inexact high-order proximal-point methods \cite{nesterov2021inexact, nesterov2023inexact}. Recent methods \cite{carmon2022optimal, kovalev2022first} managed to remove the extra logarithmic factor and
thus are optimal.

On the other hand, the worst-case function for the class of functions with Lipschitz continuous Hessian has a discontinuous third derivative \cite{nesterov2023inexact}. 
Therefore, quite interestingly, some algorithms that rely solely on second-order information can surpass the classical lower bound of $\Omega(\varepsilon^{-2/7})$ for the smaller function class whose second and third derivatives are Lipschitz continuous.
For example, a second-order method only requires an iteration complexity of $\mathcal{O}(\varepsilon^{-1/4});$  at each iteration, it calls the second-order oracle once and the first-order
oracle $\mathcal{O}(\log{\varepsilon^{-1}})$ times at most,
see Nesterov \cite{nesterov2021superfast}.
Furthermore, in \cite{nesterov2023inexact}, Nesterov shows that a second-order implementation of the third-order accelerated proximal-point method with segment search can achieve an iteration complexity of $\mathcal{O}(\varepsilon^{-1/5}),$ at each iteration, it calls the second-order oracle once and the first-order
oracle $\mathcal{O}(\log{\varepsilon^{-1}})$ times at most.
Following this, second-order methods have achieved the convergence rate of third-order accelerated tensor methods in terms of function-value residual for composite problems \cite{ahookhosh2024high, ahookhosh2021high}.

Furthermore, the aforementioned methods  assume access to the Lipschitz constant of the corresponding derivative, which is difficult to estimate in practice, and most  assume exact solutions for each subproblem step, which is hard to obtain when $p>3$ \cite{nesterov2021implementable}.
Therefore, it is important to develop parameter-free and inexact algorithms that automatically estimate such parameters and solve each step inexactly, while maintaining similar theoretical guarantees.
Among the second-order methods, Cartis et al. proposed an adaptive cubic regularized Newton method \cite{cartis2012evaluation}, and further relaxed the criterion for solving each subproblem while maintaining the convergence properties, which is widely used due to its numerical efficiency. Recent works \cite{grapiglia2020tensor, grapiglia2022tensor, jiang2020unified, grapiglia2023adaptive,He2025HAR} develop accelerated high-order methods that are fully independent of problem constants while maintaining superior theoretical iteration bounds; they achieve an iteration complexity of 
$\mathcal{O}(\max\{L_0, pL_{p+1}(f), \theta\}{\varepsilon^{-{\frac{1}{{p+1}}}}}),$
where $L_0$ is the initial line-search value and $\theta$ represents the inexactness level of each subproblem.

Despite these advances of high-order methods for convex optimization, the convergence guarantees in existing works  
are mainly established for the function-value residual, i.e., generating a point $\widehat{x}$ such that $f(\widehat{x}) - f^\ast \leq \varepsilon$, where $f$ is the objective function, $f^\ast$ is its optimal value, and $\varepsilon > 0$ is a given precision;  but 
not on the gradient norm criterion, i.e., $\|\nabla f(\widehat{x}) \| \leq \varepsilon.$ 
While the former is theoretically appealing, it is difficult to verify in practice since $f^*$ is unknown. In contrast, the gradient norm criterion is easily computable and may serve as
a measure of optimality. Moreover, the gradient norm minimization perspective also provides a practical mechanism to monitor the progress of the algorithm and  can be used to estimate problem parameters adaptively.
Some works provide complexity results for the gradient norm criterion; however, they do not match the guarantees for the function-value residual.  For example,
in \cite{grapiglia2020tensor}, the authors studied a regularized accelerated tensor method and established a complexity of $\mathcal{O}(\varepsilon^{-1/(p+1)} \log \varepsilon^{-1})$,  
which is worse than the corresponding function-value residual complexity of the accelerated tensor methods \cite{nesterov2021implementable} by a logarithmic factor.
As another example, the ANPE method \cite{monteiro2013accelerated} exhibits a gradient norm complexity of $\mathcal{O}(\varepsilon^{-1/3}\log\varepsilon^{-1})$, which is worse than its nearly optimal function-value residual complexity of $\mathcal{O}(\varepsilon^{-2/7}\log\varepsilon^{-1}).$
A third example is a parameter-free and inexact method, where the best-known iteration complexity for gradient norm minimization is $\mathcal{O}(\max\{L_0, pL_{p+1}(f), \theta\}{\varepsilon^{-{\frac{p+1}{{p(p+2)}}}}})$ \cite{grapiglia2022tensor,grapiglia2023adaptive}. However, this complexity is worse than the function-value residual complexity, which is $\mathcal{O}(\max\{L_0, pL_{p+1}(f), \theta\}\varepsilon^{-\frac{1}{p+1}}).$

To summarize, the convergence rates in terms of gradient norm for different types of high-order methods do not match the corresponding function-value residual. 
This raises a natural question: is there a unified approach to translate the {\it fast} function-value residual rate into a {\it matching} rate for gradient norm minimization?

To address this question, we propose a unified {\it accumulative regularization} (AR) method for gradient minimization and use it to accelerate existing high-order algorithms, yielding best-known gradient norm convergence rates for convex problems. Specifically, for an existing high-order algorithm $\mathcal{A}$, we utilize its {\it fast} function-value residual decay together with its {\it slow} gradient norm decay, and design new gradient norm minimization algorithms that match the convergence rate of the function-value residual.
It is worth noting that 
our approach is inspired by the first-order gradient minimization algorithm of Lan et al. \cite{lan2024optimalparameterfreegradientminimization}, and recovers their results when $p=1$. Our contributions can be briefly summarized as follows.

First, for a twice-differentiable convex function with a Lipschitz continuous Hessian, we refine the estimating sequence analysis of the accelerated CNM (ACNM) to obtain a slow gradient norm convergence rate of $\mathcal{O}(\varepsilon^{-1/2})$, together with its original fast function-value residual decrease $\mathcal{O}(\varepsilon^{-1/3})$. Then we design a third-order AR method that uses ACNM as a subroutine. 
The method restarts intermittently, with each epoch initialized with the output from the previous epoch. Moreover, it applies a single accumulative regularization scheme whose parameter depends on $\varepsilon$ at initialization and then increases geometrically across epochs.
We show that, to achieve $\|\nabla f(\widehat{x})\|\le \varepsilon$, the number of iterations reduces to $\mathcal{O}(\varepsilon^{-1/3})$, thereby matching the complexity for the function-value residual.

Second, we generalize the method to solve the composite convex problem 
\begin{equation}\label{eqn:composite-problem}
  \min\limits_{x \in \mathbb{R}^n} \left\{ f(x) := g(x) + h(x) \right\},
\end{equation}
where $g$ and $h$ are proper closed convex functions, $h$ may be  nonsmooth,
and $g$ is $p$-times continuously  
differentiable on $\mathbb{R}^n.$ We assume that there exists at least one optimal solution $x^*.$  
Furthermore, the level of smoothness of  
$g$ is characterized by the family of Hölder constants, i.e., $\exists\, p \geq 1,\ \nu \in [0\,\,1],\ L_{p,\nu}(g) > 0$ such that:
\begin{equation}\label{Holder}
    \|D^p g(x) - D^p g(y)\|   \leq L_{p,\nu}(g) \|x - y\|^{\nu},\quad \forall x, y \in \mathbb{R}^n.
\end{equation}
The goal is to generate an approximate solution  $\widehat{x}$ such that $\|\widehat{\nu}\|\leq \varepsilon,$ where $\widehat{\nu}\in \partial f(\widehat{x}).$
We design a $(p+\nu)$-th order AR framework that uses different subroutines $\mathcal{A}.$
Within this framework, we accelerate the subgradient norm convergence to match the fast function-value residual rate of $\mathcal{A}.$ We illustrate it with several examples.
\smallskip
\begin{enumerate}
\item[(a)] Suppose $g$ satisfies \eqref{Holder} with general $p\geq 1, \nu\in [0\,\,1],$ and $p+\nu\geq 2,$ by choosing $\mathcal{A}$ as  the $p$-th-order accelerated tensor method \cite{grapiglia2023adaptive}, the $(p+\nu)$-th order AR framework only requires $\mathcal{O}(\varepsilon^{-1/{(p+\nu)}})$ 
iterations, which matches that of the function-value residual \cite{nesterov2021implementable}. 
\item[(b)]
Suppose $g$ satisfies \eqref{Holder} with general $p\geq1,$ and
$\nu = 1$, by choosing $\mathcal{A}$ as the nearly optimal tensor methods
\cite{jiang2021optimal, bubeck2019near, gasnikov2019optimal} or
$p$-th-order inexact proximal point method \cite{nesterov2023inexact,ahookhosh2021high}, the $(p+1)$-th order AR framework requires 
$\mathcal{O}\left(\varepsilon^{-\frac{2}{3p+1}} \log \tfrac{1}{\varepsilon}\right)$  
iterations, which matches that of the function-value residual. 
The extra 
logarithmic term disappears if $\mathcal{A}$ is chosen as
the optimal $p$-th-order tensor method (for function-residual) \cite{carmon2022optimal,kovalev2022first} and can output a slow-gradient norm.

\item[(c)]
Suppose $g$ satisfies \eqref{Holder} with $p=3, \nu=1,$ by choosing $\mathcal{A}$ as the second-order method \cite{nesterov2023inexact}, 
the fourth-order AR framework achieves an iteration complexity of $\mathcal{O}(\varepsilon^{-{1}/{5}});$ at each iteration, it calls the second-order once and the first-order
oracle $\mathcal{O}(\log{\varepsilon^{-1}})$ times at most.
\end{enumerate}

Third, we develop an inexact and parameter-free high-order AR framework 
for unconstrained convex optimization \eqref{eqn:main} where  $f$ satisfies \eqref{Holder} with general $p\geq 1, \nu\in [0\,\,1],$ and $p+\nu\geq 2.$ By choosing $\mathcal{A}$ as adaptive and inexact tensor methods \cite{grapiglia2020tensor, grapiglia2023adaptive}, the $(p+\nu)$-th order inexact and parameter-free AR framework only requires
\begin{align*}
\mathcal{O}\left(\left[\tfrac{\max\{{pL_{p,\nu}}(f), L_0, \theta \}[C_p\operatorname{dist}(x_0, X^*)^{p+\nu-1}]}{\varepsilon}\right]^{\frac{1}{p+\nu}}\right)
\end{align*}
calls to the $p$-th-order oracle, 
where $L_0$ is the initial line-search value and $\theta$ represents the inexactness level of the subproblem of each iteration. It does not
require knowledge of the number of iterations to run in advance, the number of restarting epochs, or $L_{p,\nu} (f),$ and the initial optimality gap $\operatorname{dist}(x_0, X^*)$. It 
matches the best-known convergence rate in terms of function-value residual for adaptive and inexact methods \cite{jiang2020unified, grapiglia2020tensor, grapiglia2023adaptive,He2025HAR}.

Fourth, we consider the composite convex problem \eqref{eqn:composite-problem}, additionally assuming that
$f$ is uniformly convex of order $q$ with parameter $\sigma_q:$
\begin{align*}
    f(y)\geq f(x)+\langle\nabla f(x), y-x\rangle+\tfrac{\sigma_q}{q}\|y-x\|^q,\quad \forall\,\,\, x, y\in\mathbb{R}^n.
\end{align*}
We show that by properly restarting the high-order AR framework, we can obtain an approximate solution $\widehat{x}$ such that $\|\widehat{\nu}\|\leq \varepsilon,$ \text{where} $\widehat{\nu}\in \partial f(\widehat{x})$ 
within at most  
\begin{align*}
\mathcal{O}\left(\left(\tfrac{(p+1)L_{p+1}(g)}{\sigma_{p+1}(f)}\right)^{\frac{2}{3p+1}}  \log \tfrac{\|\nu_0\|}{\varepsilon} \right)
\end{align*}  
iterations if $q=p+1,$
where $\nu_0\in\partial f(x_0).$
Notice that when $p = 1$, this recovers the gradient minimization results for strongly convex and smooth function \cite{lan2024optimalparameterfreegradientminimization}.
Furthermore, it improves the condition number dependence established in \cite{doikov2022local,cartis2022evaluation} for linear decay of the function-value residual.
The linear rate improves to superlinear rate if $q<p+1,$ i.e., $\|\widehat{\nu}\|\leq \varepsilon$  
within at most  
\begin{align*}
   \mathcal{O}\left( 
   % \tfrac{1}{\log{\left(\frac{p}{q-1}\right)}}
   {\log\log\left[\tfrac{\sigma_q(f)}{{q\varepsilon}}{\left(\tfrac{\sigma_q(f)}{qL_{p+1}(g)}\right)^{\frac{q-1}{p+1-q}}}\right]}+\left(\tfrac{qL_{p+1}(g)}{{\sigma_{q}(f)}}\right)^{\frac{2}{3p+1}}\left(\tfrac{q\|\nu_0\|}{\sigma_{q}(f)}\right)^{{\frac{2(p-q+1)}{(3p+1)(q-1)}}}\right)
\end{align*}
iterations. When $p=q=2,$ the algorithm achieves the iteration complexity of $\mathcal{O}( {\log\log{\tfrac{\sigma^2_2(f)}{L_{3}(g)\varepsilon}}}+[{L_{3}(g)\|\nu_0\|}/{\sigma^2_{2}(f)}]^{{\frac{2}{7}}}),$ which nearly matches the lower bound \cite{arjevani2019oracle} for the function-value residual.
The algorithm becomes sublinear if $q>p+1,$ and requires
\begin{align*}
  \mathcal{O}\left(  \left(\tfrac{qL_{p+1}(g)}{{\sigma_{q}(f)}}\right)^{\frac{2}{3p+1}}\left(\tfrac{q}{\sigma_{q}(f)}\right)^{{\frac{2(q-1-p)}{(3p+1)(q-1)}}}{\varepsilon}^{-{\frac{2(q-1-p)}{(3p+1)(q-1)}}}\right)
\end{align*}
iterations. It improves over the 
sublinear rate $\mathcal{O}(\varepsilon^{-\frac{q-p-1}{pq}})$ when $q>p+1$ for the function-value residual \cite{cartis2022evaluation}. The results in \cite{doikov2022local} are  non-asymptotic rates, but assume knowledge of the Lipschitz constant
$L_{p+1}(q)$ and uniform convexity $\sigma_{q}(f)$; a very recent work \cite{Welzel2025Local} removes the need to know the Lipschitz constant $L_{p+1}(q)$ through an adaptive algorithm, but it is an asymptotic rate.

 Lastly, we develop inexact and parameter-free gradient minimization which  {\it does not} require knowledge of the Lipschitz continuous parameter or the uniform convex parameter.
 Using the unconstrained uniformly convex optimization problem with $q=p+1$ as a showcase, 
our parameter-free and inexact method requires
    \begin{align*}
\mathcal{O}\left(\left(\tfrac{\max\{{pL_{p+1}}(f), L_0, \theta \}}{\sigma_{p+1}(f)}\right)^{\frac{1}{p+1}}\left\{\left\lceil \log_4\tfrac{\sigma_{p+1,0}}{\sigma_{p+1}(f)}\right\rceil+\left\lceil \log_2\tfrac{\|\nabla f(x_0)\|}{\varepsilon}\right\rceil\right\}\right)
    \end{align*}
    iterations to compute a solution $\widehat{x}$ such that $\|\nabla f(\widehat{x})\|\leq \varepsilon,$ where $\sigma_{p+1,0}$ is the initial guess of the uniform convexity parameter $\sigma_{p+1}(f).$
    It is the first parameter-free and inexact method that does not require the knowledge of the uniform convexity parameter $\sigma_{p+1}(f),$ while still matching the fastest known non-asymptotic convergence rate of methods that assume $L_{p+1}(f), \sigma_{p+1}(f)$ are known.

To the best of our knowledge, all of the above gradient norm convergence results are new.
The only exception is the recent work \cite{dvurechensky2024near}, which established a complexity of $\mathcal{O}(\varepsilon^{-\frac{2}{3p+1}}\log \tfrac{1}{\varepsilon} )$ for convex problems.  However, the result appears to be more restrictive as it requires $p\geq 2,$ thus 
hard to expand to the cases of bounded Hessian $(p=2, \nu=0)$ or $L$-smooth $(p=1, \nu=1).$ Moreover, it is restricted to the unconstrained case and does not extend to general convex composite or uniformly convex objectives. Unlike AR, it relies on known problem parameters and requires the subproblem in each iteration to be solved exactly.

The rest of this paper is organized as follows.  
In \Cref{sec:second-order-methods}, we provide a brief review of the ACNM method and its modified analysis for convex functions with smooth Hessians. Then, we construct a third-order AR approach to accelerate the gradient norm convergence rate.
In \Cref{sec:Optimal high-order methods for structured convex problems}, we propose a general AR framework and employ different types of subroutines $\mathcal{A}$ for structured convex composite problems.  In \Cref{sec:parameter-free-convex},  we derive a parameter-free, inexact, high-order AR framework 
for convex functions. 
In \Cref{sec:Global high-order non-degeneracy}, we further investigate gradient norm minimization for uniformly convex functions and develop a parameter-free and inexact algorithm.

\subsection{Notation and terminology}
We use $\|\cdot\|$ to denote the Euclidean norm in $\mathbb{R}^n,$ which is associated with the inner product $\langle \cdot, \cdot\rangle.$
Denote the  directional derivative of function \( f \) at \( x \) along directions \( h_i \in \mathbb{R}^n, i = 1, \dots, p \) as $D^p f(x)[h_1, \dots, h_p].$
Note that \( D^p f(x)[\cdot] \) is a symmetric \( p \)-linear form.
For example, for any $x\in\text{dom } f$ and $h_1, h_2\in\mathbb{R}^n,$ we have 
\begin{align*}
    D f(x)[h_1]=\langle \nabla f(x),  h_1\rangle\quad\text{and}\quad D^2 f(x)[h_1, h_2]=\langle \nabla^2 f(x)h_1,  h_2\rangle.
 \end{align*}
   Its norm is induced by the Euclidean norm.
\begin{equation*}
    \|D^p f(x)\|   \coloneqq \max_{h_1, \dots, h_p} \left\{
        D^p f(x)[h_1, \dots, h_p]: \|h_i\| \leq 1, i = 1, \dots, p
    \right\}.
\end{equation*}
For a convex function $f,$ $\partial f(x)$ denotes the subdifferential at $x$. For any real number $s,$ $\lceil s\rceil$ and $\lfloor s\rfloor$ denote the nearest integers to $s$ from above and below, respectively. Let $[m]\triangleq\{1,\dots,m\}$, with $m\in \mathbb{N}_{+}.$

\section{Acceleration of the Cubic Regularization of Newton’s Method}\label{sec:second-order-methods}
In this section, we consider the unconstrained convex optimization problem \eqref{eqn:main},
where $f$ is a twice-differentiable convex function on $\mathbb{R}^n$ 
that satisfies \eqref{eqn:smoothness-p} with $p=2,$ i.e., there exists $L_3(f)>0$ such that 
$ \| \nabla^2 f(x)-\nabla^2 f(y)\|\leq L_3(f)\|x-y\|,$ for all $x, y\in\mathbb{R}^n.$

We start with a modified version of the ACNM \cite{nesterov2008accelerating}, and show that unlike the ACNM, which only outputs the function value,
the modified ACNM outputs both a gradient norm bound (with a slower rate) and the original fast function-value residual, simultaneously. Define
$T_M(x) \coloneqq \arg\min_{y \in \mathbb{R}^n} \xi_{2, x}(y)$, where  
\begin{align*}
\xi_{2, x}(y)={f(x)+\langle\nabla f(x), y-x\rangle+\tfrac{1}{2}\langle \nabla^2 f(x) (y-x), y-x\rangle}+\tfrac{M}{6}\|y-x\|^3,
 \end{align*}
 and $\arg\min$ indicates that $T_M(x)$ is chosen from the set of global minimizers of the corresponding problem.
 \begin{algorithm}[!ht]
\caption{Accelerated Cubic Regularization of Newton's Method}\label{alg:inner-loop}
\begin{algorithmic}[1]
 \Require $x_0\in\mathbb{R}^n, \{a_k\}_{k\geq 1}, $ $a_k>0, A_1>0.$
 Compute $x_1=T_{L_3(f)}(x_0)$ and define
\begin{equation*}
f_1(x)\coloneqq {f(x_1)+{\tfrac{1}{\sqrt{L_3(f)+M}}\left\|\nabla f(x_1)\right\| ^{\frac{3}{2}}}}+\tfrac{C}{6}\|x-x_0\|^3.
 \end{equation*}
  \For{$k\geq 1$} 
 \State Compute $\nu_k=\argmin_{x\in\mathbb{R}^n}f_k(x).$ Choose  $A_{k+1}=A_k+a_k$ and 
 \begin{align*}     y_k&=(1-\alpha_k)x_k+\alpha_k\nu_k,\quad \text{where}\quad\alpha_k=\tfrac{a_k}{A_{k+1}},\quad a_k>0.
 \end{align*}
 \State Compute $ x_{k+1}=T_{M}(y_k)$ and update
\begin{align*}
     f_{k+1}(x)&=f_k(x)+a_k\left( f(x_{k+1})+\langle \nabla f(x_{k+1}), x-x_{k+1}\rangle\right).
 \end{align*}
 \EndFor
\end{algorithmic}
\end{algorithm}
With a modified ACNM analysis, we can establish the following relations:
\begin{align*}
\mathcal{R}_k^1: \quad   & A_k f(x_k)+\textstyle\sum_{j=1}^k{A_{j}\tfrac{\left\|\nabla f(x_{j})\right\| ^{\frac{3}{2}}}{\sqrt{L_3(f)+M}}}\leq f_k^*\equiv \min\limits_{x\in \mathbb{R}^n}f_k(x).\\
    \mathcal{R}_k^2: \quad   & f_k(x)\leq A_k f(x)+\tfrac{2L_3(f)+C}{6}\|x-x_0\|^3+\tfrac{\left\|\nabla f(x_1)\right\| ^{\frac{3}{2}}}{\sqrt{L_3(f)+M}}.
\end{align*}
Here, $\mathcal{R}_k^1$ shows a lower bound for the minimum function value of the estimating function $f_k(x),$  and $\mathcal{R}_k^2$ establishes an upper bound for $f_k(x).$  See \Cref{sec:Gradient Complexity of the Subroutine} for the proof. Thus, we have the following convergence guarantee for the modified ACNM.
\begin{lemma}\label{lem:prop-cubic}
    Let the sequence $\{x_k\}^{\infty}_{k=1}$ be generated by  \Cref{alg:inner-loop} with the parameters $ M=2L_3(f), C={12L_3(f)}/{(\sqrt{2}-1)^2}, a_k={(k+1)(k+2)}/{2}, A_1=1,$
    then, for any $k\geq 1,$ we have
    \begin{align*}
         &f(x_k)-f(x^*)+\tfrac{1}{\sqrt{3L_3(f)}}\|\nabla f(x_k)\| ^{\frac{3}{2}}
        \leq\tfrac{80L_3(f)\|x_0-x^*\|^3}{k(k+1)(k+2)},
    \end{align*}
    where $x^*$ is an optimal solution to the problem \eqref{eqn:main}.
\end{lemma}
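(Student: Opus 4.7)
The plan is to chain the two estimating-sequence relations $\mathcal{R}_k^1$ and $\mathcal{R}_k^2$ in the standard way. First I would evaluate $\mathcal{R}_k^2$ at $x=x^*$ and insert the result into $\mathcal{R}_k^1$, yielding
\begin{equation*}
A_k f(x_k)+\sum_{j=1}^k A_j\tfrac{\|\nabla f(x_j)\|^{3/2}}{\sqrt{L_3(f)+M}}\le A_k f(x^*)+\tfrac{2L_3(f)+C}{6}\|x_0-x^*\|^3+\tfrac{\|\nabla f(x_1)\|^{3/2}}{\sqrt{L_3(f)+M}}.
\end{equation*}
Because $A_1=1$, the $j=1$ summand on the left precisely cancels the trailing gradient term on the right. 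All remaining summands being nonnegative, I would keep only the $j=k$ term (which carries the largest weight $A_k$) and discard the rest, obtaining
\begin{equation*}
A_k\bigl[f(x_k)-f(x^*)\bigr]+A_k\tfrac{\|\nabla f(x_k)\|^{3/2}}{\sqrt{L_3(f)+M}}\le \tfrac{2L_3(f)+C}{6}\|x_0-x^*\|^3.
\end{equation*}

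Then I would substitute the prescribed constants. A one-line induction on the recursion $A_{k+1}=A_k+(k+1)(k+2)/2$ with $A_1=1$ gives the closed form $A_k=k(k+1)(k+2)/6$. With $M=2L_3(f)$ one has $\sqrt{L_3(f)+M}=\sqrt{3L_3(f)}$, and rationalising $(\sqrt{2}-1)^{-2}=3+2\sqrt{2}$ produces $2L_3(f)+C=(38+24\sqrt{2})L_3(f)<80\,L_3(f)$. Dividing both sides of the inequality above by $A_k$ then yields exactly the stated bound for every $k\ge 2$.

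The corner case $k=1$ needs a separate argument because the cancellation above annihilates the only gradient summand in the sum. Here I would fall back on the standard single-step cubic-regularization estimates for $x_1=T_{L_3(f)}(x_0)$, namely the function-residual bound $f(x_1)-f(x^*)\le \tfrac{L_3(f)}{3}\|x_0-x^*\|^3$ together with a new-point gradient estimate of the form $\|\nabla f(x_1)\|=\mathcal{O}(L_3(f)\|x_0-x^*\|^2)$, and verify that the resulting constant still fits inside the envelope $80L_3(f)/6$. The main conceptual content of the proof is therefore the two-line estimating-sequence chain; the only real obstacle is the numerical bookkeeping, namely checking that the specific choice $C=12L_3(f)/(\sqrt{2}-1)^2$ is tight enough to force $(2L_3(f)+C)/6$ below the advertised constant while still leaving enough slack to cover the $k=1$ boundary cleanly.
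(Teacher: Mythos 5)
Your chaining of $\mathcal{R}_k^1$ and $\mathcal{R}_k^2$ at $x=x^*$, the cancellation of the $j=1$ summand against the trailing $\tfrac{\|\nabla f(x_1)\|^{3/2}}{\sqrt{L_3(f)+M}}$ term (valid since $A_1=1$), and the subsequent arithmetic with $A_k=k(k+1)(k+2)/6$, $\sqrt{L_3(f)+M}=\sqrt{3L_3(f)}$, and $(\sqrt{2}-1)^{-2}=3+2\sqrt{2}$ are exactly the closing steps of the paper's appendix proof. However, two things deserve comment.

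First, the paper's actual proof of this lemma is almost entirely the inductive derivation of $\mathcal{R}_k^1$ and $\mathcal{R}_k^2$ themselves — establishing the lower bound on $f_k^*$ via the uniform convexity of the estimating function $f_k$, invoking the cubic-step gradient inequality $\langle\nabla f(x_{k+1}),y_k-x_{k+1}\rangle\ge\sqrt{2/(L_3(f)+M)}\,\|\nabla f(x_{k+1})\|^{3/2}$ for $M\ge 2L_3(f)$, and checking the inequality $a_k^{-3/2}A_{k+1}\ge 2/3$ and $\tfrac{1}{L_3(f)+M}\ge\tfrac{4}{C(\sqrt{2}-1)^2}$ that tie the parameter choices together. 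Your proposal takes these relations as given from the main text, which is a reasonable reading, but if the lemma's proof is expected to be self-contained (as the paper's is), this is the bulk of the work that is missing.

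Second, your flag on $k=1$ is a genuine observation that the paper glosses over — after the cancellation, the $k=1$ instance of the chained inequality contains no gradient term at all, so the lemma's claim at $k=1$ does not follow from the chain alone. But your proposed repair needs more care: the bound $\|\nabla f(x_1)\|=\mathcal{O}\bigl(L_3(f)\|x_0-x^*\|^2\bigr)$ does not follow from the standard single-step cubic estimates. What Nesterov's Lemma 4 gives is $\|\nabla f(x_1)\|\le L_3(f)\|x_1-x_0\|^2$, and $\|x_1-x_0\|$ is not controlled by $\|x_0-x^*\|$ in general (it depends on $\|\nabla f(x_0)\|$ and the Hessian at $x_0$, which can be arbitrarily large under only the Lipschitz-Hessian assumption). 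A correct treatment of $k=1$ would need a different mechanism — for instance bounding $\|\nabla f(x_1)\|^{3/2}/\sqrt{L_3(f)}$ by $f(x_0)-f(x_1)$ via the descent inequality, and then in turn relating $f(x_0)-f(x_1)$ to $\|x_0-x^*\|^3$ — which is again nontrivial under only the given smoothness hypothesis. So while your instinct to isolate $k=1$ is sound, the sketched patch as written would not go through.
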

    Note that the convergence rate of the gradient norm is slower than that of the function-value residual.  
To achieve a point $\tilde{x}$ such that $\|\nabla f(\tilde{x})\|\leq \varepsilon,$ \Cref{alg:inner-loop} requires $\mathcal{O}(L_3(f)^{1/2} \|x_0 - x^*\| / \varepsilon^{1/2})$ iterations, while to achieve the function-value residual 
$f(\tilde{x})-f^*\leq \varepsilon,$ \Cref{alg:inner-loop} requires $\mathcal{O}(L_3(f)^{1/3} \|x_0 - x^*\| / \varepsilon^{1/3})$ iterations. Note that using a regularization technique could improve the convergence rate to $\mathcal{O}( [L_3(f)^{1/3}\|x_0 - x^*\|^{2/3} / \varepsilon^{1/3}]\log({L_3(f)\|x_0 - x^*\|^2}/{\varepsilon}))$ \cite{nesterov2012make}. However, the rates differ by a logarithmic term.

We next outline a third-order AR method that utilizes \Cref{alg:inner-loop} to improve its gradient norm convergence rate to that of the function-value residual. Notably, the design of \Cref{alg:main'} is built upon accumulative regularization, where the search points $\{x_s\}_{s \in [S]}$ are generated by inexactly solving a third-order proximal mapping defined in \eqref{eqn:sub-proximal-p'}. \Cref{alg:main'} requires a sequence of regularization parameters $\{\sigma_s\}_{s \in [S]}$, and the previous search points $\{x_s\}_{s \in [S-1]}$ are accumulated within the regularization term. 
Unlike the fixed regularization \cite{nesterov2012make}, \Cref{alg:main'} uses an accumulative, restart-based scheme: each epoch starts at $x_{s-1}$, and the regularizer adds a new term centered at $x_s$ while preserving all earlier terms (rather than centered at $x_0$). Such accumulative regularization is close in spirit to the classic accelerated proximal point method \cite{guler1992new}.

Observe that the proximal term is cubic. We next develop several properties of this cubic term for use in the subsequent algorithmic analysis.
Denote the cubic function as $d_3(x) = \|x - x_0\|^3/3$, for any $x_0 \in \mathbb{R}^n.$ Note that it has a Lipschitz continuous Hessian \cite{rodomanov2020smoothness} as follows:
\begin{equation}\label{eqn:power prox-function-smooth-3}
     \|D^2 d_{3}(x)-D^2 d_{3}(y)\|  \leq 4\|x-y\|,\quad \forall \,\, x, y\in\mathbb{R}^n.
\end{equation}
Furthermore, it is uniformly convex 
of degree $3$ with parameter $1/2$ \cite{nesterov2018lectures}. 
\begin{equation}\label{eqn:power prox-function-3}
d_{3}(x)-d_{3}(y)-\langle \nabla d_{3}(y), x-y\rangle\geq \tfrac{1}{6}\|x-y\|^{3},\quad \forall \,\, x, y\in\mathbb{R}^n.
\end{equation}
\begin{algorithm}[!ht]
\caption{A third-order AR method for gradient minimization}\label{alg:main'}
\begin{algorithmic}[1]
 \Require Total number of epochs $S;$ strictly increasing regularization parameters $\{\sigma_s\}_{s=0}^S$ with $\sigma_0=0;$ initial point $x_0\in\mathbb{R}^n.$
  \For{$s=1,\dots,S$} 
 \State Set initialization at epoch $s$ to $x_{s-1}.$
    \State 
    Compute an approximate solution $x_s$   of the proximal subproblem
    by running \Cref{alg:inner-loop}  with the initialization $x_{s-1}$ for $N_s$ iterations.
    \begin{align}\label{eqn:sub-proximal-p'}
        x_s\approx   & \argmin\limits_{x\in\mathbb{R}^n} \left\{ f_s(x)\coloneqq f(x)+\textstyle\sum_{i=1}^s\tfrac{(\sigma_i-\sigma_{i-1})\|x-{x}_{i-1}\|^3}{3}\right\}.
    \end{align}
     \EndFor
 \State \Return{$x_S$}
\end{algorithmic}
\end{algorithm}

We now present the convergence rate of the third-order AR method \Cref{alg:main'}.
\begin{proposition}\label{ACN}
Suppose the parameters are set to      \begin{align}\label{eqn:cubic-samplesize'}
S=\left\lceil\log_{4}\tfrac{L_{3}(f) D^2}{\varepsilon}\right\rceil+1,\,\, \sigma_s=\tfrac{4^{s-2}\varepsilon}{D^2},\,\,  N_s=\left\lceil 4\left(\tfrac{480[L_3(f)+4\sigma_s]}{{\sigma_{s}}}\right)^{\frac{1}{3}}\right\rceil,
 \end{align}
  where $D$ is an upper bound on the
distance to the set of optimal solutions, i.e., $D\geq \min\limits_{x^*\in X^*}\|x_0-x^*\|.$
  Then \Cref{alg:main'} can compute an approximate solution $x_S$ such that $ \|\nabla f(x_S)\|\leq \varepsilon$
after at most $$\left\lceil\tfrac{128L_3(f)^{\frac{1}{3}}D^{\frac{2}{3}}}{{\varepsilon^{\frac{1}{3}}}}+128S\right\rceil$$
 evaluations of the first- and second-order information of $f$.
    \end{proposition}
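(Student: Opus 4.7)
The plan is to analyze each epoch as an approximate minimization of the regularized subproblem $f_s$ and chain the resulting bounds across $s=1,\dots,S$. Writing $f_s(x) = f(x) + R_s(x)$ with $R_s(x) = \sum_{i=1}^{s}\tfrac{\sigma_i - \sigma_{i-1}}{3}\|x-x_{i-1}\|^3$, the telescoping identity $\sum_{i=1}^{s}(\sigma_i-\sigma_{i-1})=\sigma_s$ combined with \eqref{eqn:power prox-function-smooth-3} shows that $\nabla^2 R_s$ is Lipschitz with constant $4\sigma_s$, so $f_s$ has Lipschitz Hessian bounded by $L_3(f)+4\sigma_s$; by \eqref{eqn:power prox-function-3} each summand of $R_s$ is uniformly convex of degree three, and summation yields that $f_s$ is uniformly convex of degree three with parameter proportional to $\sigma_s$. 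These two facts make $f_s$ an ideal target for the subroutine of \Cref{alg:inner-loop}.

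The next step is to apply \Cref{lem:prop-cubic} to $f_s$, initialized at $x_{s-1}$ and run for $N_s$ iterations. Letting $x_s^{\ast}$ denote the exact minimizer of $f_s$, the lemma yields simultaneously
\begin{align*}
 f_s(x_s) - f_s(x_s^{\ast}) + \tfrac{\|\nabla f_s(x_s)\|^{3/2}}{\sqrt{3(L_3(f)+4\sigma_s)}} \leq \tfrac{80(L_3(f)+4\sigma_s)\|x_{s-1}-x_s^{\ast}\|^3}{N_s(N_s+1)(N_s+2)}.
\end{align*}
The choice $N_s^3 \geq 64\cdot 480 (L_3(f)+4\sigma_s)/\sigma_s$ is exactly calibrated so that the right-hand side becomes a small constant multiple of $\sigma_s\|x_{s-1}-x_s^{\ast}\|^3$. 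Combined with the lower bound $f_s(x_s)-f_s(x_s^{\ast})\geq \tfrac{\sigma_s}{6}\|x_s-x_s^{\ast}\|^3$ from uniform convexity, this delivers the per-epoch contraction $\|x_s-x_s^{\ast}\| \leq \tfrac{1}{4}\|x_{s-1}-x_s^{\ast}\|$, and an analogous control on $\|\nabla f_s(x_s)\|$ in terms of $\sigma_s$ times squared distances.

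The core inductive claim is that $\|x_{s-1} - x_s^{\ast}\| \leq D$ for every $s \in [S]$. To close the induction one must bound the shift $\|x_{s-1}^{\ast} - x_s^{\ast}\|$ between consecutive exact minimizers; comparing the first-order optimality conditions of $f_{s-1}$ and $f_s$ and invoking the uniform convexity of $f_{s-1}$ shows that this shift is controlled by the increment $\sigma_s - \sigma_{s-1}$ together with the previous contraction, so the geometric growth $\sigma_s = 4\sigma_{s-1}$ is exactly calibrated to keep $\|x_{s-1}-x_{s-1}^{\ast}\| + \|x_{s-1}^{\ast}-x_s^{\ast}\|$ within $D$. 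At the final epoch I would decompose $\nabla f(x_S) = \nabla f_S(x_S) - \nabla R_S(x_S)$, bound the regularization gradient via the accumulated distances $\|x_S - x_{i-1}\|$, and invoke the choice $S = \lceil \log_4(L_3(f)D^2/\varepsilon)\rceil + 1$, which ensures $\sigma_S$ exceeds a constant multiple of $L_3(f)$ so that the regularization dominates the smoothness in the last epoch and forces $\|\nabla f(x_S)\|\leq \varepsilon$.

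For the iteration count, $N_s = O\bigl(((L_3(f)+4\sigma_s)/\sigma_s)^{1/3}\bigr)$ behaves like $O((L_3(f)/\sigma_s)^{1/3}) = O((L_3(f)D^2/(4^{s-2}\varepsilon))^{1/3})$ when $\sigma_s \ll L_3(f)$, so that $\sum_s N_s$ forms a geometric series in $4^{-s/3}$ dominated by its largest term $O((L_3(f)D^2/\varepsilon)^{1/3})$; when $\sigma_s \gtrsim L_3(f)$ (i.e., the last $O(1)$ epochs), each $N_s$ is an absolute constant and the contribution is $O(S)$. Summing gives the stated bound $\lceil 128 L_3(f)^{1/3}D^{2/3}/\varepsilon^{1/3} + 128 S\rceil$. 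The main obstacle will be making the distance bound $\|x_{s-1}-x_s^{\ast}\|\leq D$ airtight through the accumulated regularization with shifting proximal centers $\{x_{i-1}\}$: the induction must simultaneously control the approximate-minimizer error inside the current epoch and the drift between consecutive exact minimizers, and the key leverage comes from the uniform convexity of degree three produced by the cubic proximal function together with the carefully tuned geometric schedule of $\sigma_s$.
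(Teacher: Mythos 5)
The overall structure you outline—applying \Cref{lem:prop-cubic} to the regularized subproblem $f_s$, exploiting the Lipschitz Hessian constant $L_3(f)+4\sigma_s$ and uniform convexity with modulus $\propto\sigma_s$, contracting $\|x_s-x_s^*\|$ by a factor of $4$ per epoch, and decomposing $\nabla f(x_S)$ into $\nabla f_S(x_S)$ plus the regularization gradient—is the same as the paper's, and the complexity accounting at the end is sound. However, the one place where you deviate from the paper is precisely where your argument has a gap.

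You propose to control the distance $\|x_{s-1}-x_s^*\|$ by the triangle inequality through the drift $\|x_{s-1}^*-x_s^*\|$, bounding the drift by ``comparing the first-order optimality conditions of $f_{s-1}$ and $f_s$ and invoking the uniform convexity of $f_{s-1}$.'' This fails for the base case $s=1$: by construction $\sigma_0=0$, so $f_0=f$ is merely convex, and there is no uniform convexity of $f_0$ to invoke. Even for $s\ge 2$, running the stationarity identity $\nabla f_{s-1}(x_s^*)=-(\sigma_s-\sigma_{s-1})\|x_s^*-x_{s-1}\|(x_s^*-x_{s-1})$ through the degree-$3$ uniform convexity modulus $\sigma_{s-1}/2$ gives $\|x_s^*-x_{s-1}^*\|^2\lesssim \frac{\sigma_s-\sigma_{s-1}}{\sigma_{s-1}}\|x_s^*-x_{s-1}\|^2$; with $\sigma_s=4\sigma_{s-1}$ this multiplicative factor is an absolute constant larger than $1$, so the triangle inequality does not keep $\|x_{s-1}-x_s^*\|$ within the previous iterate's error and the induction does not close. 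The paper avoids all of this with a two-line function-value comparison: evaluating $f_s$ at $x_s^*$ and $x_{s-1}^*$ and using the optimality of $x_{s-1}^*$ for $f_{s-1}$ and of $x_s^*$ for $f_s$ yields the nonexpansion $\|x_s^*-x_{s-1}\|\le\|x_{s-1}^*-x_{s-1}\|$ directly (inequality \eqref{eqn:optimality-condition-each epoch}), valid for \emph{all} $s\ge1$ including $s=1$ where $x_0^*\in X^*$, and requiring no curvature of $f_{s-1}$.

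Relatedly, your stated ``core inductive claim'' $\|x_{s-1}-x_s^*\|\le D$ is too weak. To bound both $\|\nabla f_S(x_S)\|$ and the regularization gradient $\sum_i(\sigma_i-\sigma_{i-1})\|x_S-x_{i-1}\|^2$ to $O(\varepsilon)$, one needs the geometric decay $\|x_{s-1}-x_{s-1}^*\|\le D/4^{s-1}$, which is exactly what the nonexpansion plus per-epoch contraction deliver in the paper's chain \eqref{eqn:linear-cubic-per-epoch}. If you replace your drift-based argument with the function-value comparison and propagate the resulting geometric decay (not merely the bound by $D$), the rest of your outline goes through as written.
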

 \begin{proof}
 By \eqref{eqn:sub-proximal-p'}, 
 we have
 $\|\nabla f(x_S)\|
     \leq \|\nabla f_S(x_S)\|+\sum_{i=1}^S(\sigma_i-\sigma_{i-1})\|x_S-{x}_{i-1}\|^2.$
     We start with bounding $ \|\nabla f_S(x_S)\|.$
By the property of the power prox-function in \eqref{eqn:power prox-function-smooth-3}, we conclude that $f_S(\cdot)$ has a Lipschitz continuous Hessian with parameter $L_{3}(f_S)=L_3(f)+4\sigma_S.$ 
     For all $s\in[S],$
     let $x_s^*$  denote the exact solution.
 By \Cref{lem:prop-cubic}, 
\begin{align}\label{eqn:cubic-gradient-overall}
   \|\nabla f_S(x_S)\|&{\leq}\,{\left[3L_{3}(f_S)\right]^{\frac{1}{3}}\left(\tfrac{80L_{3}(f_S)\|x_S^*-x_{S-1}\|^3 }{N_S^3}\right)^{\frac{2}{3}}}.
 \end{align} 
We proceed with  bounding $\|x_S^*-x_{S-1}\|.$
By optimality conditions of \eqref{eqn:sub-proximal-p'} at $x_{s-1}^*$ and $x_{s}^*,$ the following hold
    \begin{align}\label{eqn:optimality-condition-each epoch}
        &f(x_{s-1}^*)+\textstyle\sum_{i=1}^{s-1}\tfrac{(\sigma_i-\sigma_{i-1})\|x_{s-1}^*-{x}_{i-1}\|^3}{3}+\tfrac{(\sigma_s-\sigma_{s-1})\|x_{s}^*-{x}_{s-1}\|^3}{3}\notag\\
        &\leq f(x_{s}^*)+\textstyle\sum_{i=1}^{s-1}\tfrac{(\sigma_i-\sigma_{i-1})\|x_{s}^*-{x}_{i-1}\|^3}{3}+\tfrac{(\sigma_s-\sigma_{s-1})\|x_{s}^*-{x}_{s-1}\|^3}{3}\notag\\
&\leq f(x_{s-1}^*)+\textstyle\sum_{i=1}^{s-1}\tfrac{(\sigma_i-\sigma_{i-1})\|x_{s-1}^*-{x}_{i-1}\|^3}{3}+\tfrac{(\sigma_s-\sigma_{s-1})\|x_{s-1}^*-{x}_{s-1}\|^3}{3}.
    \end{align}
    Thus, we have $ \|x_s^*-x_{s-1}\|\leq \|x_{s-1}^*-x_{s-1}\|,$ for all $s\in[S].$ 
Using this fact, we can derive the linear convergence of the solution error per epoch, i.e., $\|x_s-x_s^*\|\leq\|x_{s-1}^*-x_{s-1}\|/4$ as follows.
\begin{align}\label{eqn:linear-cubic-per-epoch}
\|x_s-x_s^*\|^3&\overset{\text{(a)}}{\leq}\tfrac{6[f_s(x_s)-f_s(x_s^*)]}{\sigma_s} \overset{\text{(b)}}{\leq}\tfrac{6}{\sigma_s}\tfrac{80L_{3}(f_s) }{N_s^3}\|x_s^*-x_{s-1}\|^3\notag\\
&\overset{\text{(c)}}{\leq}\tfrac{6}{\sigma_s}\tfrac{80L_{3}(f_s) }{N_s^3}\|x_{s-1}^*-x_{s-1}\|^3\overset{\text{(d)}}{\leq}\tfrac{1}{64}\|x_{s-1}^*-x_{s-1}\|^3,
 \end{align}
 where in (a), we used the convexity of $f,$  the
 uniform convexity of the power prox-function in \eqref{eqn:power prox-function-3},
 and $\nabla f_s(x_s^*)=0,$ in (b) we used \Cref{lem:prop-cubic}, in (c), we used the fact $ \|x_s^*-x_{s-1}\|\leq \|x_{s-1}^*-x_{s-1}\|,$ and in (d), we substituted the choice of $N_s$ in \eqref{eqn:cubic-samplesize'}. Thus, substituting  \eqref{eqn:linear-cubic-per-epoch} and \eqref{eqn:cubic-samplesize'}
 into
 \eqref{eqn:cubic-gradient-overall}, we have
\begin{align*}
 \|\nabla f_S(x_S)\|^3
 &\leq{3L_{3}(f_S)\left(\tfrac{80L_{3}(f_S)\|x_{S-1}^*-x_{S-1}\|^3 }{N_S^3}\right)^{2}}\leq
 3(L_{3}(f)+\tfrac{4^{S-1}\varepsilon}{D^2})(\tfrac{\varepsilon D}{96\times4^{2S}})^{2}\leq \tfrac{\varepsilon^3}{8}.
 \end{align*}
 It remains to bound 
 $\textstyle\sum_{i=1}^{S}(\sigma_i-\sigma_{i-1})\|x_S-{x}_{i-1}\|^2.$
\begin{align*}
&\textstyle\sum_{i=1}^{S}(\sigma_i-\sigma_{i-1})\|x_S-{x}_{i-1}\|^2\notag\\
% =\sum\limits_{i=1}^S(\sigma_i-\sigma_{i-1})\|\sum\limits_{k=i}^{s}(x_k-x_{k-1})\|^2\notag\\
&=\textstyle\sum_{i=1}^{S}(\sigma_i-\sigma_{i-1})\bigg(2\textstyle\sum_{\ell>k\geq i}^S\langle x_k-x_{k-1}, x_{\ell}-x_{\ell-1}\rangle+\textstyle\sum_{k=i}^{S}\left\|x_k-x_{k-1}\right\|^2\bigg)\notag\\
&\leq\textstyle\sum_{i=1}^{S}(\sigma_i-\sigma_{i-1})\bigg(2\textstyle\sum_{\ell>k\geq i}^S\|x_k-x_{k-1}\|\| x_{\ell}-x_{\ell-1}\|+\textstyle\sum_{k=i}^{S}\left\|x_k-x_{k-1}\right\|^2\bigg).
 \end{align*}
 By using the fact $\|x_k^*-x_{k-1}\|\leq \|x_{k-1}^*-x_{k-1}\|,$ we have
\begin{align*}
&2\textstyle\sum_{\ell>k\geq i}^S\|x_k-x_{k-1}\|\| x_{\ell}-x_{\ell-1}\|
     % &\overset{\triangle}{\leq}2\sum\limits_{\ell>k\geq i}^s\left(\|x_k-x_k^*\|+\|x_k^*-x_{k-1}\|\right)\left(\| x_{\ell}-x_{\ell}^*\|+\|x_{\ell}^*-x_{\ell-1}\|\right)\notag\\
\leq8\textstyle\sum_{\ell>k\geq i}^S\|x_{k-1}^*-x_{k-1}\|\|x_{\ell-1}^*-x_{\ell-1}\|\notag\\
&\overset{\eqref{eqn:linear-cubic-per-epoch}}{\leq}8\textstyle\sum_{\ell>k\geq i}^S\tfrac{1}{4^{k-1}}\tfrac{D^2}{4^{\ell-1}}\leq8\textstyle\sum_{k=i}^S\tfrac{D^2}{4^{k-1}}\tfrac{1/4^{k}}{1-1/4}\leq\textstyle\sum_{k=i}^S\tfrac{D^2}{4^{2k-2}}\tfrac{8}{3}.
 \end{align*}
 Similarly, we have
$\sum_{k=i}^{S}\left\|x_k-x_{k-1}\right\|^2\leq 2\sum_{k=i}^{S}{D^2}/{4^{2k}}.$
 Therefore,
 we have
\begin{align*}
&\textstyle\sum_{i=1}^S(\sigma_i-\sigma_{i-1})\|x_S-{x}_{i-1}\|^2
    \leq \textstyle\sum_{i=1}^S(\sigma_i-\sigma_{i-1})\left(\textstyle\sum_{k=i}^{S}\tfrac{2D^2}{4^{2k}}+\textstyle\sum_{k=i}^S\tfrac{D^2}{4^{2k-2}}\tfrac{8}{3}\right)\notag\\
    &\leq 50\textstyle\sum_{i=1}^S(\sigma_i-\sigma_{i-1})\textstyle\sum_{k=i}^{S}\tfrac{D^2}{4^{2k}}
   \overset{\eqref{eqn:cubic-samplesize'}}{\leq} \tfrac{\varepsilon}{2}.
 \end{align*}
 Combining this with $\|\nabla f(x_S)\|
     \leq \|\nabla f_S(x_S)\|+\sum_{i=1}^S(\sigma_i-\sigma_{i-1})\|x_S-{x}_{i-1}\|^2,$ we conclude $\|\nabla f(x_S)\|\leq \varepsilon.$
The total number of first- and second-order evaluations of $f$ is bounded by
\begin{align*}
   \textstyle\sum_{s=1}^S N_s&
    % = \sum\limits_{k=1}^S\left\lceil\frac{32[L_3(f)+2\sigma_k]^{\frac{1}{3}}10^{\frac{1}{3}}}{{\sigma_{k-1}}^{\frac{1}{3}}}\right\rceil
    \overset{\textnormal{(e)}}{\leq}\textstyle\sum_{s=1}^S\left\lceil\tfrac{4[L_3(f)^{\frac{1}{3}}+\left({4^{s-1}\varepsilon}/{D^2}\right)^{\frac{1}{3}}]480^{\frac{1}{3}}}{\left({4^{s-2}\varepsilon}/{D^2}\right)^{\frac{1}{3}}}\right\rceil
    % \notag\\
    % &\leq\sum\limits_{k=1}^S\left\lceil\frac{640^{\frac{1}{3}}L_3(f)^{\frac{1}{3}}D^{\frac{2}{3}}}{2^{k-1}\varepsilon^{\frac{1}{3}}}+640^{\frac{1}{3}}\right\rceil
    \leq\left\lceil\tfrac{128L_3(f)^{\frac{1}{3}}D^{\frac{2}{3}}+128S\varepsilon^{\frac{1}{3}}}{\varepsilon^{\frac{1}{3}}}\right\rceil,
\end{align*}
where in (e), we substituted the choices of $\sigma_k, N_k$ in \eqref{eqn:cubic-samplesize'} and used the relation $(a+b)^{1/3}\leq a^{1/3}+b^{1/3},$ for all $a, b\geq0.$
This concludes the proof.
 \end{proof}
The following comments are in order.

\noindent {\bf (i)} \textbf{On the parameter choices:} The regularization parameter $\sigma_s$ is exponentially increasing, while the epoch length $N_s$ is exponentially decreasing, scaling inversely with respect to $\sigma_s$, i.e., $N_s = \Theta([L_3(f)/\sigma_s]^{1/3})$.  
The dominant iterations belong to the first epoch, which determines the final convergence rate, as the remaining $N_s$ are summable.
Furthermore, the number of epochs $S$ is chosen such that $4\sigma_S$ reaches $L_3(f)$, after which the algorithm terminates.

\noindent {\bf (ii)} \textbf{On the convergence rate:}
Observe that the convergence rate in terms of the gradient norm in \Cref{ACN} 
removes the extra $\log({L_3(f)\|x_0 - x^*\|^2}/{\varepsilon})$  in \cite{nesterov2012make}, and matches the convergence rate of the function-value residual for ACNM \cite{nesterov2008accelerating}.  
Furthermore, the regularization parameter $\sigma_s$ increases geometrically as the algorithm approaches the true solution $x^*$,  which enables the algorithm to use shorter epoch lengths $N_s$, 
thus achieving a faster overall convergence rate.

\section{Gradient Minimization Framework for Structured Convex Problems}\label{sec:Optimal high-order methods for structured convex problems}

In this section, we study the convex composite problem \eqref{eqn:composite-problem}.
We first introduce a general high-order AR framework, and then in \Cref{sec:Minimization of convex Hölder smooth functions} we study its convergence when the smooth part has $p$-th derivatives that are $\nu$-Hölder continuous.  
In \Cref{Sec:Optimal gradient minimization}, we focus on the case when the smooth function part has Lipschitz continuous $p$-th-order  derivative (i.e., $\nu=1$).

Consider the high-order AR framework in \Cref{alg:main-composite}.
 \begin{algorithm}[!ht]
\caption{High-order AR framework for gradient minimization}\label{alg:main-composite}
\begin{algorithmic}[1]
 \Require Total number of epochs $S;$ strictly increasing regularization parameters $\{\sigma_s\}_{s=0}^S$ with $\sigma_0=0;$  initial point $x_0\in\mathbb{R}^n.$
\For{$s=1,\dots,S$} 
    \State 
    Compute an approximate solution $x_s$   of the proximal subproblem
\begin{align}\label{eqn:sub-prox-composite}
 x_s\approx \argmin\limits_{x\in \mathbb{R}^n} \left\{ f_s(x)\coloneqq f(x)+\textstyle\sum_{i=1}^s\tfrac{\sigma_i-\sigma_{i-1}}{p+\nu}\|x-{x}_{i-1}\|^{p+\nu}\right\},
    \end{align}
    where $p+\nu\geq 2,$ 
    by running some subroutine $\mathcal{A} $ with the  initialization $x_{s-1}$ for $N_s$ iterations.
     \EndFor
 \State \Return{$x_S$}
\end{algorithmic}
\end{algorithm}
Observe that \Cref{alg:main-composite} shares the acceleration idea of \Cref{alg:main'}, and the power of the regularization term depends on the smoothness  level of $g.$ The subroutine $\mathcal{A}$ needs to satisfy some properties for AR to converge, for example, $\mathcal{A}$ outputs a slow gradient norm decay and a fast function-value residual decay, which we will specify later in the convergence analysis.

Notice that the regularization terms in the subproblem \eqref{eqn:sub-prox-composite} build upon the power prox-function $d_{p+\nu}: \mathbb{R}^n \rightarrow \mathbb{R},$ defined as $d_{p+\nu}(x) \coloneqq 1/{(p+\nu)} \|x - x_0\|^{p+\nu}, x_0\in\mathbb{R}^n.$  
We recall its uniform convexity and the $\nu$-Hölder continuity of the $p$-th-order derivatives  as follows \cite{nesterov2008accelerating, rodomanov2020smoothness}.
Note that we only require $p+\nu\geq 2.$ Therefore, AR covers the first order algorithm for $L$-smooth convex function, i.e., $p=\nu=1,$ and the bounded Hessian case, i.e., $p=2, \nu=0.$
\begin{lemma}\label{lem:power-uniform-smoothness}
For all $\nu\in[0 \,\, 1],  {p+\nu\geq 2}, $
$d_{p+\nu}(x)$ is a uniformly convex function of degree $p+\nu$ with parameter $2^{-(p+\nu-2)}.$ 
\begin{equation}\label{eqn:power prox-function}
d_{p+\nu}(x)-d_{p+\nu}(y)-\langle \nabla d_{p+\nu}(y), x-y\rangle\geq \tfrac{1}{p+\nu}\left(\tfrac{1}{2}\right)^{p+\nu-2}\|x-y\|^{p+\nu}.
\end{equation}
Furthermore, its $p$-th-order derivatives satisfies $\nu$- Hölder continuity.
\begin{equation}\label{eqn:power prox-function-smooth}
     \|D^p d_{p+\nu}(x)-D^p d_{p+\nu}(y)\|  \leq \tfrac{2}{p+\nu}\textstyle\prod_{i=1}^{p}(i+\nu)\|x-y\|^\nu.
\end{equation}
\end{lemma}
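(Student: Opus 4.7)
The plan is to translate so that $x_0 = 0$, denote $r = p+\nu \geq 2$ and $\phi(z) := d_{p+\nu}(z) = \|z\|^r/r$, and establish the two estimates separately; both are classical facts about powers of the Euclidean norm (see \cite{nesterov2008accelerating,rodomanov2020smoothness}), so the work lies in careful bookkeeping of constants rather than any fundamentally new argument.

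For the uniform convexity inequality \eqref{eqn:power prox-function}, I would begin with the exact second-order Taylor remainder
\begin{equation*}
\phi(x) - \phi(y) - \langle \nabla\phi(y), x-y\rangle = \int_0^1 (1-t)\,\bigl\langle \nabla^2\phi\bigl(y+t(x-y)\bigr)(x-y),\,x-y\bigr\rangle\,dt,
\end{equation*}
which is valid since $\phi \in C^2$ when $r \geq 2$. A direct computation gives $\nabla^2\phi(z) = \|z\|^{r-2}I + (r-2)\|z\|^{r-4}zz^\top \succeq \|z\|^{r-2} I$, so the task reduces to lower-bounding $\int_0^1 (1-t)\|y+t(x-y)\|^{r-2}\,dt$ by a constant multiple of $\|x-y\|^{r-2}$. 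By the triangle inequality, either $\|y\| \geq \tfrac{1}{2}\|x-y\|$ or $\|x\| \geq \tfrac{1}{2}\|x-y\|$; in either case I can isolate a sub-interval of $[0,1]$ of positive length on which $\|y+t(x-y)\| \geq \tfrac{1}{2}\|x-y\|$, and the elementary integration of $(1-t)$ on that sub-interval produces the target constant $(1/2)^{r-2}/r$.

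For the Hölder continuity \eqref{eqn:power prox-function-smooth}, I would compute $D^p\phi$ explicitly. Writing $\phi$ as the composition of the scalar map $t \mapsto t^r/r$ with $z \mapsto \|z\|$ (or applying Fa\`a di Bruno to $z\mapsto\|z\|^2$), one shows that $D^p\phi(x)[h]^p$ is a homogeneous polynomial in $\langle x,h\rangle$ and $\|x\|^2\|h\|^2$ with radial prefactors of the form $\|x\|^{\nu-2k}$; collecting terms and applying Cauchy--Schwarz yields the sharp operator-norm bound
\begin{equation*}
\|D^p\phi(x)\| \;=\; \tfrac{1}{p+\nu}\textstyle\prod_{i=1}^{p}(i+\nu)\,\|x\|^\nu \qquad (x \neq 0),
\end{equation*}
with equality along the radial direction $h = x/\|x\|$. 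In the smooth regime $\nu = 1$, the claim then follows by integrating $D^{p+1}\phi$ along the segment $[y,x]$. For $\nu \in [0,1)$, $\phi$ is not $C^{p+1}$ at the origin, so I would argue by cases: when $\max(\|x\|,\|y\|) \leq \|x-y\|$, the triangle inequality $\|D^p\phi(x) - D^p\phi(y)\| \leq \|D^p\phi(x)\| + \|D^p\phi(y)\|$ combined with the operator-norm bound above produces exactly the factor $2$ in the constant; in the complementary case, one uses the reverse-triangle estimate $|\,\|x\|^\nu - \|y\|^\nu\,| \leq \|x-y\|^\nu$ together with a direct term-by-term expansion of $D^p\phi(x) - D^p\phi(y)$ to finish.

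The main obstacle I anticipate is the Hölder continuity in the non-smooth regime $\nu \in [0,1)$, where the naive ``integrate the $(p+1)$-th derivative'' strategy breaks down near the origin; the case analysis sketched above, which mirrors the one used in \cite{rodomanov2020smoothness}, is what ultimately delivers the stated constant $2$ cleanly. A secondary nuisance is making sure the radial prefactors in the Fa\`a di Bruno expansion collapse to the clean product $\prod_{i=1}^{p}(i+\nu)$; this is a routine but slightly tedious induction on $p$.
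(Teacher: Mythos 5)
The paper itself gives no proof of this lemma; it is stated as a known fact and attributed to \cite{nesterov2008accelerating,rodomanov2020smoothness}, so there is no in-paper argument to compare against. Your proposal, however, contains a quantitative gap in the uniform-convexity half.

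Your plan is to use the second-order remainder, replace the Hessian by the lower bound $\nabla^2\phi(z)\succeq \|z\|^{r-2}I$ (with $r=p+\nu$), and then bound $\int_0^1(1-t)\|y+t(x-y)\|^{r-2}\,dt$ from below by $(1/2)^{r-2}/r$ times $\|x-y\|^{r-2}$. This cannot yield the stated constant for any $r>2$, because the discarded radial term $(r-2)\|z\|^{r-4}zz^\top$ is precisely what matters in the worst case. Concretely, take $y=-\tfrac12 h$, $x=\tfrac12 h$, $h$ a unit vector. Then $\|y+th\|=|t-\tfrac12|$ and your lower bound becomes
\begin{equation*}
\int_0^1(1-t)\,\bigl|t-\tfrac12\bigr|^{\,r-2}\,dt \;=\; \frac{(1/2)^{r-1}}{r-1},
\end{equation*}
whereas the lemma demands $(1/2)^{r-2}/r$. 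The ratio of the former to the latter is $r/(2(r-1))<1$ for all $r>2$, so the crude Hessian bound falls short (for $r=3$: you get $1/8$, the target is $1/6$, the true Bregman divergence is $1/4$). The further reduction to a subinterval on which $\|y+th\|\ge\tfrac12\|x-y\|$ is strictly worse: in this same example that set has measure zero. The standard route (which is what the cited references use, in spirit) is not the second-order remainder at all; one first proves the first-order monotonicity estimate
\begin{equation*}
\langle \nabla\phi(x)-\nabla\phi(y),\,x-y\rangle \;\ge\; 2^{\,2-r}\,\|x-y\|^r,
\end{equation*}
which reduces by rotational invariance to a two-dimensional inequality in $(\|x\|,\|y\|,\cos\angle(x,y))$ and is handled by power-mean arguments, and then integrates $\beta_\phi(x,y)=\int_0^1\langle\nabla\phi(y+t(x-y))-\nabla\phi(y),\,x-y\rangle\,dt$ to pick up exactly the factor $\int_0^1 t^{r-1}\,dt=1/r$. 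Your Hessian route drops a factor of order $r-1$ in the radial direction that the first-order route never gives up.

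The Hölder half is in better shape: the operator-norm identity $\|D^p\phi(x)\|=\tfrac{1}{p+\nu}\prod_{i=1}^p(i+\nu)\|x\|^\nu$ is correct (I checked $p=1,2,3$), and the case $\max(\|x\|,\|y\|)\le\|x-y\|$ is cleanly handled by the triangle inequality, which is where the factor $2$ comes from. But the complementary case is left as ``term-by-term expansion,'' and that is where the real work is: a naive integration of $D^{p+1}\phi$ along the segment produces $\int_0^1\|y+t(x-y)\|^{\nu-1}\,dt$, and when $\|x\|$ is only slightly larger than $\|x-y\|$ this integral is not controlled by $\|x-y\|^{\nu-1}$. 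You would need either a finer case split (e.g.\ $\max\ge 2\|x-y\|$ versus $\|x-y\|<\max<2\|x-y\|$) or a direct comparison of the tensor coefficients in the spirit of \cite{rodomanov2020smoothness}. As written, that part is a sketch, not a proof, though the approach is salvageable.
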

Suppose \Cref{alg:main-composite} terminates at epoch $S$ with a point $x_S^k, k\geq N_S.$ Then,
by the construction of $f_S,$ and suppose $p+\nu\geq 2,$ we have $$\nu_S^k\coloneqq v_S^k-\textstyle\sum_{i=1}^S(\sigma_i-\sigma_{i-1})\|{x}_S^k-x_{i-1}\|^{p+\nu-2}({x}_S^k-x_{i-1})\in\partial f({x}_S^k),$$ where $v_S^k\in \partial f_S(x_S^k).$
Therefore, the subgradient norm $\|\nu_S^k\|$ can be decomposed into two parts as follows.
\begin{align}\label{eqn: error-decompose}
\|\nu_S^k\|&\leq \|v_S^k\|+{\textstyle\sum_{i=1}^S(\sigma_i-\sigma_{i-1})\|x_S^k-{x}_{i-1}\|^{p+\nu-1}}.
 \end{align} 
We next show that the second term is controlled by the regularization parameters, provided the inner subroutine satisfies the linear convergence condition
$\|x_s - x_s^*\| \leq \|x_{s-1} - x_{s-1}^*\|/4,
$ which can be satisfied by our subroutines, as we will demonstrate in the next subsections.

Let $D$ denote an upper bound on the distance to the set of optimal solutions, i.e., 
\begin{align}\label{eqn:initial-optimality-gap}
   \operatorname{dist}(x_0,X^*)\coloneqq \min_{x^* \in X^*} \|x_0 - x^*\|\leq D.
\end{align} 
This quantity characterizes the initial optimality gap and will be used frequently in our convergence analysis. We have the following convergence guarantee for \eqref{eqn: error-decompose}.
\begin{lemma}\label{lem:linear}
    Suppose $p + \nu \geq 2,$  the subroutine can compute $x_s,$ for all $s \in [S]$, such that $\|x_s - x_s^*\| \leq \|x_{s-1} - x_{s-1}^*\|/4,$ and suppose in the last epoch $S$, for all $k \geq N_S$, we have $\|x_S^k - x_S^*\| \leq \|x_{S-1} - x_{S-1}^*\|/4.$
Then, for all $k\geq N_S,$
we have  
\begin{align*}
    \textstyle\sum_{i=1}^S (\sigma_i - \sigma_{i-1}) \|x_S^k - x_{i-1}\|^{p+\nu-1}
    \leq \tfrac{2\sigma_S D^{p+\nu-1}}{4^{(p+\nu-1)(S-2)}} + \tfrac{(18D)^{p+\nu-1}}{2}
    \textstyle\sum_{i=1}^S \tfrac{\sigma_i - \sigma_{i-1}}{4^{i(p+\nu-1)}},
\end{align*}
where $D$ is defined in \eqref{eqn:initial-optimality-gap}.
\end{lemma}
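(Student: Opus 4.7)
The plan is to reduce the claim to two ingredients: a geometric decay of the per-epoch approximation errors $e_s \coloneqq \|x_s - x_s^*\|$, and a triangle-inequality split of $\|x_S^k - x_{i-1}\|$ that isolates the last-epoch displacement from the previous-epochs displacement. The former produces the $4^{-i(p+\nu-1)}$ factor appearing in the second sum on the right-hand side, while the isolated last-epoch displacement yields the standalone $\sigma_S$-term. The argument parallels the scalar telescoping carried out for the cubic case in the proof of \Cref{ACN}, now generalized to the exponent $\gamma \coloneqq p+\nu-1 \geq 1$.

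I would begin with an induction showing $e_s \leq D/4^s$ for $s = 0, 1, \ldots, S$. The base case $e_0 \leq D$ uses \eqref{eqn:initial-optimality-gap} (interpreting $x_0^*$ as the $X^*$-projection of $x_0$), and the inductive step is immediate from the hypothesized contraction $\|x_s - x_s^*\| \leq \|x_{s-1} - x_{s-1}^*\|/4$. Mimicking the two-sided optimality comparison \eqref{eqn:optimality-condition-each epoch} for the proximal subproblem \eqref{eqn:sub-prox-composite}---now with the power-prox term of exponent $p+\nu$ in place of $3$---I would then prove $\|x_s^* - x_{s-1}\| \leq \|x_{s-1}^* - x_{s-1}\| = e_{s-1}$. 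Combining these gives the per-epoch step bound $\|x_j - x_{j-1}\| \leq e_j + e_{j-1} \leq 5D/4^j$ for $1 \leq j \leq S-1$, and the analogous bound $\|x_S^k - x_{S-1}\| \leq (5/4)\,e_{S-1} \leq 5D/4^S$ in the last epoch, where the extended ``for all $k \geq N_S$'' clause from the hypothesis is essential.

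At this point I would split by the triangle inequality
\begin{equation*}
\|x_S^k - x_{i-1}\| \leq \|x_S^k - x_{S-1}\| + \|x_{S-1} - x_{i-1}\|
\end{equation*}
and raise to the $\gamma$-th power using $(a+b)^\gamma \leq 2^{\gamma-1}(a^\gamma + b^\gamma)$, valid since $\gamma \geq 1$. Summing against $\sigma_i - \sigma_{i-1}$, the first piece is independent of $i$ and its weights telescope to $\sigma_S$; plugging in $\|x_S^k - x_{S-1}\| \leq 5D/4^S$ and invoking the elementary estimate $2^{\gamma-1}\cdot 5^\gamma \leq 2\cdot 16^\gamma$ (equivalently $(5/8)^\gamma \leq 4$) recovers the advertised term $\tfrac{2\sigma_S D^\gamma}{4^{\gamma(S-2)}}$. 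For the second piece, a further triangle inequality together with geometric summation of the step bounds yields $\|x_{S-1} - x_{i-1}\| \leq \sum_{j=i}^{S-1} 5D/4^j \leq (20/3)D/4^i \leq 9D/4^i$, and the identity $2^{\gamma-1}\cdot 9^\gamma = 18^\gamma/2$ then produces the coefficient $(18D)^\gamma/2$ in front of $\sum_{i=1}^S (\sigma_i - \sigma_{i-1})/4^{i\gamma}$, as claimed.

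The main obstacle is the bookkeeping in the first step: one must correctly chain two distinct hypotheses---the assumed contraction, which measures an approximate iterate against the next \emph{exact} subproblem solution, and the proximal optimality inequality $\|x_s^* - x_{s-1}\| \leq \|x_{s-1}^* - x_{s-1}\|$, which measures consecutive exact solutions against the approximate previous iterate. Once the geometric chain $e_s \leq D/4^s$ (with its extended version in the last epoch) is established, the remainder is a routine expansion of $(a+b)^\gamma$ and a geometric-series summation, with the constants $5/8 < 1$ and $20/3 \leq 9$ calibrating the precise coefficients in the advertised bound.
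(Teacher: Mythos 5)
Your proof is correct and reaches the advertised bound. The overall skeleton matches the paper's: establish the geometric decay $\|x_s-x_s^*\|\le D/4^s$, prove $\|x_s^*-x_{s-1}\|\le\|x_{s-1}^*-x_{s-1}\|$ via the two-sided optimality comparison for \eqref{eqn:sub-prox-composite}, split $\|x_S^k-x_{i-1}\|$ at $x_{S-1}$ by the triangle inequality, and push the exponent through with $(a+b)^\gamma\le 2^{\gamma-1}(a^\gamma+b^\gamma)$, $\gamma:=p+\nu-1\ge1$.

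Where you genuinely diverge is in bounding the multi-epoch displacement $\|x_{S-1}-x_{i-1}\|=\|\sum_{j=i}^{S-1}(x_j-x_{j-1})\|$. The paper works at the level of the \emph{squared} norm, expanding \eqref{eqn:quadratic-relation} into diagonal and cross terms, bounding each by Cauchy--Schwarz, and then raising the resulting scalar bound to the power $\gamma/2$. You instead apply the triangle inequality directly, bound each per-epoch step $\|x_j-x_{j-1}\|\le e_j+e_{j-1}\le 5D/4^j$, and sum the geometric series to get $\|x_{S-1}-x_{i-1}\|\le\tfrac{20}{3}D/4^i\le 9D/4^i$. This is a more elementary route: it avoids the cross-term bookkeeping and the awkward $(\cdot)^{\gamma/2}$ step, yet lands on the same constant $9D/4^i$ (and hence the same $(18D)^{\gamma}/2$ coefficient after $2^{\gamma-1}\cdot 9^\gamma=18^\gamma/2$). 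The paper's quadratic expansion would in principle allow exploiting cancellation in the telescoping sum, but since it bounds cross terms by absolute values anyway, it buys nothing here and is strictly more work. Your treatment of the leading $\sigma_S$-term (inserting $x_S^*$, using the extended-$k$ contraction, and the estimate $(5/8)^\gamma\le4$) also agrees with the paper's in substance.
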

\begin{proof}
Similar to the proof of \Cref{ACN}, by using the optimality condition of \eqref{eqn:sub-prox-composite}, for all $s\geq 1,$ $s\in[S],$ we have
\begin{align}
        \|x_{s-1}-x_s^*\|&\leq \|x_{s-1}-x_{s-1}^*\|.\label{eqn:consecutive-sol-dis}
        \end{align}
        Furthermore, for the last epoch $S,$ if $p+\nu\geq 2,$
        we have
        \begin{align}
&\big\|\textstyle\sum_{k=i}^{S-1}(x_k-x_{k-1})\big\|^2
\leq 2\textstyle\sum_{k=i}^{S-1}\left(\left\|x_k-x_k^*\right\|^2+\left\|x_{k-1}^*-x_{k-1}\right\|^2\right)\notag\\
&\quad\quad\quad\quad+\textstyle\sum_{\ell>k\geq i}^{S-1}\left(\|x_k-x_k^*\|+\|x_{k-1}^*-x_{k-1}\|\right)\left(\| x_{\ell}-x_{\ell}^*\|+\|x_{\ell-1}^*-x_{\ell-1}\|\right).\label{eqn:quadratic-relation}
\end{align}
By the convexity of the function $x\rightarrow \|x\|^{p+\nu-1},$ for $p+\nu\geq 2,$  we have
\begin{align*}
&\textstyle\sum_{i=1}^S(\sigma_i-\sigma_{i-1})\|x_S^k-{x}_{i-1}\|^{p+\nu-1}\notag\\
% =\textstyle\sum_{i=1}^S(\sigma_i-\sigma_{i-1})\left\|\textstyle\sum_{k=i}^{S-1}(y_k-y_{k-1})+y_S^k-y_{S-1}\right\|^{p+\nu-1}\notag\\
&\leq2^{p+\nu-2}\left(\sigma_S\left\|x_S^k-x_{S-1}\right\|^{p+\nu-1}+\textstyle\sum_{i=1}^S(\sigma_i-\sigma_{i-1})\big\|\textstyle\sum_{k=i}^{S-1}(x_k-x_{k-1})\big\|^{p+\nu-1}\right).
\end{align*}
Inserting $x_S^*$ into the first part, and using the condition $\|x_S^k-x_S^*\|\leq (1/4)\|x_{S-1}-x_{S-1}^*\|,$ for all $k\geq N_S,$
we have
\begin{align*}
      &\,\,\,2^{p+\nu-2}\sigma_S\|x_S^k-x_{S-1}\|^{p+\nu-1}\notag\\
      &\,\,\leq  4^{p+\nu-1}\sigma_S\|x_S^k-x_S^*\|^{p+\nu-1}+4^{p+\nu-1}\sigma_S\|x_S^*-x_{S-1}\|^{p+\nu-1}\notag\\
     &\overset{\eqref{eqn:consecutive-sol-dis}}{\leq} 4^{p+\nu-1}\sigma_S\|x_S^k-x_S^*\|^{p+\nu-1}+4^{p+\nu-1}\sigma_S\|x_{S-1}^*-x_{S-1}\|^{p+\nu-1}\leq\tfrac{2\sigma_SD^{p+\nu-1}}{4^{(p+\nu-1)(S-2)}},
 \end{align*}
Using \eqref{eqn:quadratic-relation}, and the condition $\|x_s-x_s^*\|\leq\,(1/4)\|x_{s-1}-x_{s-1}^*\|,$
 we have
\begin{align*}
&\textstyle\sum_{i=1}^S(\sigma_i-\sigma_{i-1})\big\|\textstyle\sum_{k=i}^{S-1}(x_k-x_{k-1})\big\|^{p+\nu-1}\notag\\
% &\overset{\text{Lem}.\ref{lem:epoch-relation}}{\leq} \textstyle\sum_{i=1}^S(\sigma_i-\sigma_{i-1})\left(2\textstyle\sum_{k=i}^{S-1}\left(\left\|y_k-y_k^*\right\|^2+\left\|y_{k-1}^*-y_{k-1}\right\|^2\right)+8\textstyle\sum_{\ell>k\geq i}^{S-1}\|y_{k-1}^*-y_{k-1}\|\|y_{\ell-1}^*-y_{\ell-1}\|\right)^{\frac{p+\nu-1}{2}}\notag\\
&\leq 
\textstyle\sum_{i=1}^S(\sigma_i-\sigma_{i-1})\bigg[\textstyle\sum_{k=i}^{S-1}\tfrac{4D^2}{16^{k-1}}+\textstyle\sum_{\ell>k\geq i}^{S-1}\tfrac{8D^2}{4^{k+\ell-2}}\bigg]^{\frac{p+\nu-1}{2}}\notag\\
%   &\,\,\,\,\leq 
% \textstyle\sum_{i=1}^S(\sigma_i-\sigma_{i-1})D^{p+\nu-1}\left[2\left(\frac{\frac{1}{16^i}}{1-\frac{1}{16}}+\frac{\frac{1}{16^{i-1}}}{1-\frac{1}{16}}\right)+8\textstyle\sum_{k\geq i}^{S-1}\frac{1}{4^{k-1}}\frac{\frac{1}{4^{k}}}{1-\frac{1}{4}}\right]^{\frac{p+\nu-1}{2}}\notag\\
% &\,\,\,\,\leq 
% \textstyle\sum_{i=1}^S(\sigma_i-\sigma_{i-1})D^{p+\nu-1}\left[32\left(\frac{\frac{17}{16^{i}}}{15}\right)+32\textstyle\sum_{k\geq i}^{S-1}\frac{1}{16^{k}}\frac{4}{3}\right]^{\frac{p+\nu-1}{2}}\notag\\
% &\,\,\,\,\leq 
% \textstyle\sum_{i=1}^S(\sigma_i-\sigma_{i-1})D^{p+\nu-1}\left[32\left(\frac{\frac{17}{16^{i}}}{15}\right)+32\frac{\frac{16}{16^i}}{15}\frac{4}{3}\right]^{\frac{p+\nu-1}{2}}\notag\\
&\leq (9D)^{p+\nu-1}
\textstyle\sum_{i=1}^S\tfrac{\sigma_i-\sigma_{i-1}}{4^{i(p+\nu-1)}},
 \end{align*}
 which concludes the proof.
\end{proof}
In the following, we specify Assumptions needed to satisfy the linear convergence in terms of the distance to the optimal solution across epochs, i.e.,
$\|x_s-x_s^*\|\leq\,(1/4)\|x_{s-1}-x_{s-1}^*\|,$  
and show the convergence rate of \Cref{alg:main-composite}.
 \subsection{Extension from accelerated cubic Newton to high-order tensor methods}\label{sec:Minimization of convex Hölder smooth functions}
In this subsection, we study AR with tensor methods as subroutines to handle convex  functions with 
  $p$-th derivatives that are $\nu$-Hölder continuous. We make the following assumption regarding the subroutine $\mathcal{A}_s\coloneqq\mathcal{A} (f, \{\sigma_i\}_{i\leq s}, \{x_{i-1}\}_{i\leq s}).$
\begin{assumption}\label{as:stronger-ANPE-tensor}
   The approximate solution 
   $x_s$  exhibits the following
performance guarantees:  after $N_s$ iterations of the subroutine $\mathcal{A}_s,$ there holds
      \begin{align}
f_s(x_s)-f_s(x_s^*) &\leq \tfrac{{C}_{\mathcal{A}}L_{p,\nu}(g)\|x_s^*-{x_{s-1}}\|^{p+\nu}}{(N_s-1)^{p+\nu}},\quad \forall\,\,1\leq s\leq S.
\label{eqn:gradient-direct-bound-anpe-h}
   \end{align}
   For the $S$-th epoch, for $N_S\leq k\leq 2N_S,$
   there exists $v_S^k
   \in \partial f_S({x}_S^k), $
   % $v_S^k
   % \in \nabla g({x}_S^k) + \partial h({x}_S^k)+\textstyle\sum_{i=1}^S(\sigma_i-\sigma_{i-1})\|x_{S}^k-x_{i-1}\|^{p+\nu-2}(x_{S}^k-x_{i-1}), N_S\leq k\leq 2N_S,$
   such that
\begin{equation}\label{eq:6.14-assumption-h}
\min\limits_{k=N_S+1,\dots, 2N_S}\|v_S^k\|\leq \tfrac{{C}_{\mathcal{A}} L_{p,\nu}(g)\|x_S^*-x_{S-1}\|^{p+\nu-1}}{(N_s-1)^{p+\nu-1}},
   \end{equation}
   where ${C}_{\mathcal{A}}>1$ is a  universal constant that depends on $\mathcal{A}_s.$
    \end{assumption}
    The above assumption holds for a wide range of algorithms with different {$(p, \nu).$} See the end of this subsection for applications. We next demonstrate that with \Cref{as:stronger-ANPE-tensor} and properly chosen  epoch lengths $\{N_s\}_{s\in[S]},$ we can achieve the linear convergence in terms of the distance to the optimal solution across epochs, i.e.,
$\|x_s-x_s^*\|\leq\,(1/4)\|x_{s-1}-x_{s-1}^*\|.$  
\begin{lemma}\label{lem: linear convergence-h}
    Suppose $p+\nu\geq 2,$ and for all $s\in[S],$ the epoch length satisfies \begin{align}\label{eqn:cubic-samplesizeoptimal-h}
  N_s\geq \left\lceil4\left(\tfrac{ 2^{{p+\nu-2}}(p+\nu){C}_{\mathcal{A}}L_{p,\nu}(g)}{{\sigma_{s}}}\right)^{\frac{1}{p+\nu}}\right\rceil+1,
 \end{align}
 where $D$ is defined in \eqref{eqn:initial-optimality-gap}.
 If the subroutine $\mathcal{A}$ for solving subproblems \eqref{eqn:sub-prox-composite}
 satisfies 
 \Cref{as:stronger-ANPE-tensor}, then, for all $s\in[S],$
 \Cref{alg:main-composite} can compute $x_s$  such that 
 \begin{equation}
     \|x_s-x_s^*\|\leq\tfrac{\|x_{s-1}-x_{s-1}^*\|}{4}.
 \end{equation}
 Furthermore, in the last epoch $S,$ for all $N_S\leq k\leq 2N_S,$ we have $ \|x_S^k-x_S^*\|\leq {\|x_{S-1}-x_{S-1}^*\|}/{4}.$
\end{lemma}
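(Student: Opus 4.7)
My plan is to prove the lemma by induction on $s \in [S]$, combining two ingredients: (i) the uniform convexity of the accumulated regularizer (from \Cref{lem:power-uniform-smoothness}) to convert the function-value accuracy supplied by \Cref{as:stronger-ANPE-tensor} into a distance bound, and (ii) an optimality-condition comparison between $x_s^*$ and $x_{s-1}^*$ analogous to \eqref{eqn:optimality-condition-each epoch}. For the base case I take $x_0^*$ to be a closest minimizer of $f = f_0$ to $x_0$, so that $\|x_0 - x_0^*\| = \operatorname{dist}(x_0, X^*)$ and the induction is well-posed.

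First I would observe that each summand $(\sigma_i - \sigma_{i-1})\|\cdot - x_{i-1}\|^{p+\nu}/(p+\nu)$ in $f_s$ is uniformly convex of degree $p+\nu$ with parameter $(\sigma_i - \sigma_{i-1})\,2^{-(p+\nu-2)}$ by \eqref{eqn:power prox-function}. Summing over $i$ and telescoping the coefficients to $\sigma_s$, the accumulated regularizer is uniformly convex with parameter $\sigma_s\,2^{-(p+\nu-2)}$; combining with the convexity of $f$ and the first-order condition $0 \in \partial f_s(x_s^*)$ yields
$$ f_s(x) - f_s(x_s^*) \ \geq\ \tfrac{\sigma_s\,2^{-(p+\nu-2)}}{p+\nu}\,\|x - x_s^*\|^{p+\nu}, \qquad \forall\, x \in \mathbb{R}^n. $$
Specializing to $x = x_s$ and substituting the function-value bound \eqref{eqn:gradient-direct-bound-anpe-h} produces
$$ \|x_s - x_s^*\|^{p+\nu} \ \leq\ \tfrac{(p+\nu)\,2^{p+\nu-2}\,C_{\mathcal{A}}\,L_{p,\nu}(g)}{\sigma_s\,(N_s - 1)^{p+\nu}}\,\|x_s^* - x_{s-1}\|^{p+\nu}. $$
The choice of $N_s$ in \eqref{eqn:cubic-samplesizeoptimal-h} is precisely tuned so that the scalar factor on the right is at most $4^{-(p+\nu)}$, delivering $\|x_s - x_s^*\| \leq \|x_s^* - x_{s-1}\|/4$.

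To close the induction step, I would replay the calculation of \eqref{eqn:optimality-condition-each epoch} with the $(p+\nu)$-power regularizer in place of the cubic one: testing $x_{s-1}^*$ and $x_s^*$ against the optimality of the other in $f_{s-1}$ and in $f_s = f_{s-1} + (\sigma_s - \sigma_{s-1})\|\cdot - x_{s-1}\|^{p+\nu}/(p+\nu)$ respectively and subtracting the resulting inequalities yields $\|x_s^* - x_{s-1}\|^{p+\nu} \leq \|x_{s-1}^* - x_{s-1}\|^{p+\nu}$, i.e., $\|x_s^* - x_{s-1}\| \leq \|x_{s-1} - x_{s-1}^*\|$. Concatenating with the previous bound produces exactly $\|x_s - x_s^*\| \leq \|x_{s-1} - x_{s-1}^*\|/4$. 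For the final epoch, the identical chain applies to every iterate $x_S^k$ with $N_S \leq k \leq 2N_S$, provided that the function-value bound in \eqref{eqn:gradient-direct-bound-anpe-h} persists at such $k$, which is automatic for the $\mathcal{O}(1/k^{p+\nu})$ accuracy rates of the tensor-type subroutines for which \Cref{as:stronger-ANPE-tensor} is instantiated.

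I expect the main obstacle to be bookkeeping the numerical constants: the factor $2^{p+\nu-2}$ from the strong-convexity modulus of the power prox-function and the $(p+\nu)$ from its $1/(p+\nu)$ normalization must line up precisely with the exponent $1/(p+\nu)$ taken in \eqref{eqn:cubic-samplesizeoptimal-h}, so that the $4$ in $N_s - 1 \geq 4(\cdot)^{1/(p+\nu)}$ raises to exactly the $4^{-(p+\nu)}$ contraction in the displayed bound. A secondary subtlety is that \Cref{as:stronger-ANPE-tensor} measures error via $\|x_s^* - x_{s-1}\|$ (distance from the \emph{exact} new optimum to the previous \emph{approximate} iterate), which differs from the $\|x_{s-1} - x_{s-1}^*\|$ appearing in the induction hypothesis; the optimality-condition comparison above is exactly what bridges the two without any loss.
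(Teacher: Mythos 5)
Your proposal is correct and follows essentially the same route as the paper's proof: uniform convexity of the accumulated regularizer (the paper's \eqref{eqn:uniform-convexity-fs}) to convert the function-value bound \eqref{eqn:gradient-direct-bound-anpe-h} into a distance bound, combined with the optimality-comparison inequality $\|x_s^*-x_{s-1}\|\le\|x_{s-1}-x_{s-1}^*\|$ (the paper's \eqref{eqn:consecutive-sol-dis}, which the paper establishes in the proof of \Cref{lem:linear} exactly as you re-derive it from \eqref{eqn:optimality-condition-each epoch}), and the constant-bookkeeping is identical. The induction framing and the explicit telescoping of the uniform-convexity moduli are minor presentational additions, and you correctly flag the same implicit assumption the paper makes for the final-epoch iterates $x_S^k$ with $N_S\le k\le 2N_S$.
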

\begin{proof}
   By the uniform convexity of $f_s$ and the optimality condition $\langle v_s, x_s-x_s^*\rangle\geq 0,$ where $v_s\in\partial f_s(x_s),$  we have
\begin{align}\label{eqn:uniform-convexity-fs}
    f_s(x_s)-f_s(x_s^*)
    % &\geq \langle f_s'(x_s^*), x_s-x_s^*\rangle +\frac{\sigma_s}{p+\nu}\left(\frac{1}{2}\right)^{{p+\nu-2}}\|x_s-x_s^*\|^{p+\nu}\notag\\
    &\geq \tfrac{\sigma_s}{p+\nu}\left(\tfrac{1}{2}\right)^{p+\nu-2}\|x_s-x_s^*\|^{p+\nu}, \quad\forall s\in[S].
\end{align}
Combining this with \eqref{eqn:gradient-direct-bound-anpe-h} in \Cref{as:stronger-ANPE-tensor} and \eqref{eqn:consecutive-sol-dis} in \Cref{lem:linear}, we have 
\begin{align}\label{eqn:linear-per-epoch-function-h}
\tfrac{\sigma_s}{p+\nu}\left(\tfrac{1}{2}\right)^{p+\nu-2}\|x_s-x_s^*\|^{p+\nu}
% &\leq f_s(x_s)-f_s(x_s^*)\notag\\
&\overset{\eqref{eqn:gradient-direct-bound-anpe-h},\eqref{eqn:consecutive-sol-dis}}{\leq} 
% \frac{{C}_{\mathcal{A}}L_{p,\nu}(g)\|x_s^*-{x_{s-1}}\|^{p+\nu}}{(N_s-1)^{p+\nu}}\notag\\
% &\overset{\eqref{eqn:consecutive-sol-dis}}{\leq}
\tfrac{{C}_{\mathcal{A}}L_{p,\nu}(g)\|x_{s-1}^*-{x_{s-1}}\|^{p+\nu}}{(N_s-1)^{p+\nu}}.
% &\overset{\eqref{eqn:uniform-convexity-fs}}{\leq} \frac{{C}_{\mathcal{A}}L_{p,\nu}(g)}{(N_s-1)^{p+\nu}}\left[ f_{s-1}(x_{s-1})-f_{s-1}(x_{s-1}^*)\right]\frac{p+\nu}{\sigma_{s-1}}2^{p+\nu-2}.
 \end{align}
 Therefore, by the epoch length choice in \eqref{eqn:cubic-samplesizeoptimal-h}, we have
\begin{align*}
     \|x_s-x_s^*\|^{p+\nu}&\overset{\eqref{eqn:linear-per-epoch-function-h}}{\leq}\tfrac{[{C}_{\mathcal{A}}L_{p,\nu}(g)(p+\nu)]2^{{p+\nu-2}}}{(N_s-1)^{p+\nu}\sigma_{s}}\|x_{s-1}^*-x_{s-1}\|^{p+\nu}
     % =\left(\frac{c_{\mathcal{A}}L_{p+1}(g)(p+1)2^{{\,p-1}}}{N_s^{\frac{3p+1}{2}}\sigma_{s}}\right)^{\frac{1}{p+1}}\|x_{s-1}^*-x_{s-1}\|
     \overset{\eqref{eqn:cubic-samplesizeoptimal-h}}
     % \notag\\
     % &\,{\leq}\,\frac{1}{2}\left(\frac{1}{2}\right)^{\frac{1}{p+\nu}}\left(\frac{\sigma_{s-1}}{\sigma_s}\right)^{\frac{1}{p+\nu}}\|x_{s-1}^*-x_{s-1}\|
     \leq \tfrac{\|x_{s-1}^*-x_{s-1}\|^{p+\nu}}{4^{p+\nu}}.
 \end{align*}
%   Furthermore, for the function-value residual, we have
% \begin{align*}
%       f_s(x_s)-f_s(x_s^*)&\leq\frac{{C}_{\mathcal{A}}L_{p,\nu}(g)}{(N_s-1)^{p+\nu}}\left[ f_{s-1}(x_{s-1})-f_{s-1}(x_{s-1}^*)\right]\frac{p+\nu}{\sigma_{s-1}}2^{p+\nu-2}\notag\\
%       &\overset{\eqref{eqn:cubic-samplesizeoptimal-h}}{\leq} 
%       % \frac{L_{p+1}(g)c_{\mathcal{A}}}{\frac{ 8^{p}(p+1)L_{p+1}(g)c_{\mathcal{A}}}{{\sigma_{s-1}}}}\left[ f_{s-1}(x_{s-1})-f_{s-1}(x_{s-1}^*)\right]\frac{p+1}{\sigma_{s-1}}2^{p-1}\leq
%       \frac{1}{{2^{p+\nu+1}}{{}}}\left[ f_{s-1}(x_{s-1})-f_{s-1}(x_{s-1}^*)\right].
%   \end{align*}
Similarly, notice that in the last epoch, for all $N_S\leq k\leq 2N_S,$ we have 
  \begin{align*}
     &\tfrac{\sigma_S}{p+\nu}\left(\tfrac{1}{2}\right)^{p+\nu-2}\|x_S^k-x_S^*\|^{p+\nu}\leq   f_S(x_S^k)-f_S(x_S^*)\overset{\eqref{eqn:gradient-direct-bound-anpe-h}}{\leq} \tfrac{{C}_{\mathcal{A}}L_{p,\nu}(g)\|x_S^*-{x_{S-1}}\|^{p+\nu}}{(k-1)^{p+\nu}}\notag\\
&\overset{k\geq N_S}{\leq} \tfrac{{C}_{\mathcal{A}}L_{p,\nu}(g)\|x_S^*-{x_{S-1}}\|^{p+\nu}}{(N_S-1)^{p+\nu}}\overset{\eqref{eqn:consecutive-sol-dis}}{\leq} \tfrac{{C}_{\mathcal{A}}L_{p,\nu}(g)\|x_{S-1}^*-{x_{S-1}}\|^{p+\nu}}{(N_S-1)^{p+\nu}}.
  \end{align*}
  Substituting  the epoch length  condition \eqref{eqn:cubic-samplesizeoptimal-h} concludes the proof.
  % \begin{align*}
  %      \|x_S^k-x_S^*\|&\leq\frac{({C}_{\mathcal{A}}L_{p,\nu}(g))^{\frac{1}{p+\nu}}(p+\nu)^{\frac{1}{p+\nu}}2^{\frac{p+\nu-2}{p+\nu}}}{(N_s-1)\sigma_{s}^{\frac{1}{p+1}}}\|x_{S-1}^*-x_{S-1}\|\leq \frac{1}{4}\|x_{S-1}^*-x_{S-1}\|.
  % \end{align*}
  % \begin{align*}
  %      f_S(x_S^k)-f_S(x_S^*)\leq \frac{1}{{ 2^{3p+3}}{{}}}\left[ f_{S-1}(x_{S-1})-f_{S-1}(x_{S-1}^*)\right].
  % \end{align*}
\end{proof}
Denote $p$-th-order oracle at point $x$ as $\{ f(x), \nabla f(x), \nabla^2 f(x),\dots, \nabla^p f(x)\}, p\geq 1.$
The following theorem states the main convergence results of the high-order AR framework with subroutine $\mathcal{A}$
applied for convex composite problem \eqref{eqn:composite-problem}. 
\begin{theorem}\label{thm:p-order-gradient-optimal-h}
Suppose the assumptions in \Cref{lem: linear convergence-h}, and
\begin{align}\label{eqn:p-th-number-epochs-h}
S\coloneqq\left\lceil\log_{2^{p+\nu-1}}\tfrac{{C}_{\mathcal{A}}4^{(p+\nu-2)}D^{{p+\nu-1}} L_{p,\nu}(g)}{\varepsilon}\right\rceil+1,\,\,\sigma_s\coloneqq\tfrac{2^{(p+\nu-1)(s-1)}\varepsilon}{{C}_{\mathcal{A}} 4^{(p+\nu-2)}D^{p+\nu-1}},
 \end{align}
 for all $s\in[S].$
 Then \Cref{alg:main-composite} can compute an approximate solution  $x_S^k$ such that $\min\limits_{k=N_S+1,\dots, 2N_s}\|\nu_S^k\|\leq \varepsilon,$ \text{where} $\nu_S^k\in \partial f({x}_S^k)$
after at most 
\begin{align}\label{eqn:total-complexity-h}
% \tfrac{16\left[2(p+\nu){C}_{\mathcal{A}}L_{p,\nu}(g)D^{p+\nu-1}\right]^{\frac{1}{p+\nu}}}
% {\left(1 - 2^{-1 + \frac{1}{p+\nu}}\right) \varepsilon^{\frac{1}{p+\nu}}}=
\mathcal{O}\left(\tfrac{L_{p,\nu}(g)^{\frac{1}{p+\nu}} D^{\frac{p+\nu-1}{p+\nu}}}{\varepsilon^{\frac{1}{p+\nu}}}\right)
 \end{align}
calls to the $p$-th-order oracle.
    \end{theorem}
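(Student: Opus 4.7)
The plan is to start from the subgradient decomposition \eqref{eqn: error-decompose} at the terminal epoch $S$, bound the two pieces $\|v_S^k\|$ and $\sum_{i=1}^S(\sigma_i-\sigma_{i-1})\|x_S^k-x_{i-1}\|^{p+\nu-1}$ each by $\varepsilon/2$ under the schedule \eqref{eqn:p-th-number-epochs-h}, and then tally the total oracle cost $\sum_{s=1}^S N_s$.

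First, I would verify the hypotheses of \Cref{lem: linear convergence-h}: the epoch length $N_s$ in \eqref{eqn:cubic-samplesizeoptimal-h} is compatible with the geometric $\sigma_s$ in \eqref{eqn:p-th-number-epochs-h}, so each epoch contracts the distance to the exact subproblem solution by a factor of $4$, and the same contraction holds for every inner iterate $x_S^k$ with $N_S\le k\le 2N_S$. Telescoping from $\operatorname{dist}(x_0,X^*)\le D$ yields $\|x_{S-1}-x_{S-1}^*\|\le D/4^{S-1}$; the displacement bound \eqref{eqn:consecutive-sol-dis} established en route to \Cref{lem:linear} then upgrades this to $\|x_S^*-x_{S-1}\|\le D/4^{S-2}$. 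These two distance estimates feed both pieces of the decomposition.

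Second, for the \emph{algorithmic} piece I would invoke the subgradient bound \eqref{eq:6.14-assumption-h} and substitute the distance estimate above together with $N_S$ from \eqref{eqn:cubic-samplesizeoptimal-h}. The choice of $S$ in \eqref{eqn:p-th-number-epochs-h} is calibrated so that $\sigma_S\ge L_{p,\nu}(g)$, which makes $N_S$ of constant order while the factor $4^{-(S-2)(p+\nu-1)}$ cancels the Lipschitz constant in the numerator, producing $\min_{N_S<k\le 2N_S}\|v_S^k\|\le\varepsilon/2$. For the \emph{regularization} piece I would apply \Cref{lem:linear}: its first summand $2\sigma_S D^{p+\nu-1}/4^{(p+\nu-1)(S-2)}$ collapses to a constant multiple of $\varepsilon/C_{\mathcal{A}}$ once $\sigma_S$ is substituted, while its second summand is a geometric series in $s$ whose common ratio is $2^{-(p+\nu-1)}\le 1/2$ (using $p+\nu\ge 2$) and hence sums to $\mathcal{O}(\varepsilon)$; absorbing both into $\varepsilon/2$ is a constant adjustment in the definition of $\sigma_s$.

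Third, for the complexity count, \eqref{eqn:cubic-samplesizeoptimal-h} gives $N_s=\Theta([L_{p,\nu}(g)/\sigma_s]^{1/(p+\nu)})+\Theta(1)$, and the geometric growth $\sigma_s=2^{(p+\nu-1)(s-1)}\sigma_1$ forces the dominant term in $N_s$ to decrease geometrically with ratio $2^{-(p+\nu-1)/(p+\nu)}\le 2^{-1/2}$. Hence $\sum_{s=1}^S N_s=\mathcal{O}(N_1)+\mathcal{O}(S)$, and substituting $\sigma_1=\varepsilon/(C_{\mathcal{A}}4^{p+\nu-2}D^{p+\nu-1})$ yields $N_1=\mathcal{O}([L_{p,\nu}(g)D^{p+\nu-1}/\varepsilon]^{1/(p+\nu)})$, which dominates the additive $\mathcal{O}(S)=\mathcal{O}(\log[L_{p,\nu}(g)D^{p+\nu-1}/\varepsilon])$ term and produces \eqref{eqn:total-complexity-h}. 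The main obstacle is the tension in the choice of $S$: a larger $S$ sharpens the algorithmic bound via the factor $4^{-(p+\nu-1)(S-2)}$ but simultaneously enlarges $\sigma_S$ and so threatens the regularization bound. The essential cancellation is that the exponent $p+\nu-1$ of the power prox-function appearing in \Cref{lem:linear} exactly matches the growth ratio $2^{p+\nu-1}$ of $\sigma_s$, so that $\sigma_S$ and $4^{-(p+\nu-1)(S-2)}$ combine into an $S$-independent constant; verifying that a \emph{single} geometric schedule controls both pieces simultaneously is where the bookkeeping is most delicate.
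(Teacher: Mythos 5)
Your proposal follows the paper's proof essentially verbatim: same decomposition \eqref{eqn: error-decompose}, same invocation of \Cref{lem: linear convergence-h} and \Cref{lem:linear}, same mechanism whereby $\sigma_S \ge L_{p,\nu}(g)$ replaces the Lipschitz constant in Term~I and the geometric ratio $2^{-(p+\nu-1)/(p+\nu)} \le 2^{-1/2}$ collapses $\sum_s N_s$ to $\mathcal{O}(N_1) + \mathcal{O}(S)$. Two small bookkeeping remarks: telescoping the per-epoch contraction from $\|x_0 - x_0^*\| \le D$ gives $\|x_S^* - x_{S-1}\| \le \|x_{S-1}^* - x_{S-1}\| \le D/4^{S-1}$ rather than $D/4^{S-2}$ (the $4^{S-2}$ appears only later, inside \Cref{lem:linear}'s first summand, after an extra $4^{p+\nu-1}$ factor is absorbed); and the phrase that $4^{-(S-2)(p+\nu-1)}$ "cancels the Lipschitz constant" is slightly loose — what actually happens is that $\sigma_S \ge L_{p,\nu}(g)$ upgrades $L_{p,\nu}(g)$ to $\sigma_S$, after which the geometric factors combine into an $S$-independent constant, as you observe at the end. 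Neither issue affects the validity of the argument or the order of the final bound.
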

\begin{proof}
Recall that the subgradient norm $\|\nu_S^k\|$ can be decomposed  as follows.
\begin{align*}
\min\limits_{k=N_S+1,\dots, 2N_S}\|\nu_S^k\|&\leq \underbrace{\min\limits_{k=N_S+1,\dots, 2N_S}\|v_S^k\|}_{\texttt{Term I}}+\underbrace{\textstyle\sum_{i=1}^S(\sigma_i-\sigma_{i-1})\|x_S^k-{x}_{i-1}\|^{p+\nu-1}}_{\texttt{Term II}},
 \end{align*} 
 Substituting the convergence rate of the subgradient \texttt{Term I} from 
 \Cref{as:stronger-ANPE-tensor} and utilizing \eqref{eqn:consecutive-sol-dis}, we have
\begin{align*}
(\texttt{Term I})^{p+\nu}
% &=\min\limits_{k=N_S+1,\dots, 2N_S}\|v_S^k\|\notag\\
&\,\overset{\eqref{eq:6.14-assumption-h},\eqref{eqn:consecutive-sol-dis}}{\leq} \left[\tfrac{{C}_{\mathcal{A}} L_{p,\nu}(g)\|x_{S-1}^*-x_{S-1}\|^{p+\nu-1}}{(N_S-1)^{p+\nu-1}}\right]^{p+\nu}
\notag\\
 &\quad\overset{\eqref{eqn:cubic-samplesizeoptimal-h}}{\leq} {C}_{\mathcal{A}} L_{p,\nu}(g)\left(\tfrac{ {C}_{\mathcal{A}}\sigma_{S}\|x_{S-1}^*-x_{S-1}\|^{p+\nu}}{{ {C}_{\mathcal{A}}4^{p+\nu}2^{{p+\nu-2}}(p+\nu)}{}}\right)^{p+\nu-1}\notag\\
 &\,\,\,\quad\overset{\text{(i)}}{\leq} {C}_{\mathcal{A}} L_{p,\nu}(g)\left(\tfrac{ {C}_{\mathcal{A}} \sigma_{S} D^{p+\nu}}{{ {C}_{\mathcal{A}}2^{{p+\nu-2}}(p+\nu)}{ 4^{(p+\nu)S}}}\right)^{p+\nu-1},
% &\overset{\eqref{eqn:cubic-samplesizeoptimal-h}}{\leq} {\left(\frac{9^{{p+\nu}}(p+\nu)2^{S(p+\nu-1)}\varepsilon}{3D^p}\right)}^{\frac{p+\nu-1}{p+\nu}}\frac{{C}_{\mathcal{A}} L_{p,\nu}(g)\|x_{S-1}^*-x_{S-1}\|^{p+\nu-1}}{[2^{{2p+2\nu-1}}(p+\nu){C}_{\mathcal{A}} L_{p,\nu}(g)]^{\frac{p+\nu-1}{p+\nu}}}\notag\\
\end{align*}
where in (i), we substitute the linear convergence of $\|x_s-x_s^*\|$ from \Cref{lem: linear convergence-h}. Furthermore, by 
the choice of $S$ from \eqref{eqn:p-th-number-epochs-h}, we have $\sigma_S\geq   L_{p,\nu}(g),$ therefore,
\begin{align*}
    (\texttt{Term I})^{p+\nu}&\leq  {C}_{\mathcal{A}} \sigma_S\left(\tfrac{ \sigma_{S} D^{p+\nu}}{{ 2^{{p+\nu-2}}(p+\nu)}{ 4^{(p+\nu)S}}}\right)^{p+\nu-1}\leq {C}_{\mathcal{A}} \sigma_S\left(\tfrac{ \sigma_{S} D^{p+\nu}}{{ 2^{{p+\nu-1}}}{ 4^{(p+\nu)S}}}\right)^{p+\nu-1}.
    \end{align*}
    Substituting the choice for $\sigma_S$ \eqref{eqn:p-th-number-epochs-h},  we have $\texttt{Term I}\leq \varepsilon /2.$
It remains to bound \texttt{Term II}. By \Cref{lem: linear convergence-h}, the conditions of \Cref{lem:linear} holds, thus, we have
\begin{align*}
\texttt{Term II}
&\,\,\,\,\leq\tfrac{2\sigma_SD^{p+\nu-1}}{4^{(p+\nu-1)(S-2)}}+\tfrac{(18D)^{p+\nu-1}}{2}
\textstyle\sum_{i=1}^S\tfrac{\sigma_i-\sigma_{i-1}}{4^{i(p+\nu-1)}}\notag\\
&\,\,\overset{\eqref{eqn:p-th-number-epochs-h}}{\leq}\tfrac{2\times 2^{(p+\nu-1)(S-1)}\varepsilon}{4^{(p+\nu-1)(S-1)}}+
\textstyle\sum_{i=1}^S\tfrac{[2^{(p+\nu-1)(i-1)}]18^{p+\nu-1}\varepsilon}{2\times4^{(i+1)(p+\nu-1)}}{}{}
% \notag\\
% &\,\,\,\,
% \leq\frac{2\times 4^{(p+\nu-2)}\varepsilon}{4^{(p+\nu-2)(S-1)}}+
% \textstyle\sum_{i=1}^S\frac{3\times 4^{i-1}\varepsilon}{32}\frac{18^{p+\nu-1}}{4^{i(p+\nu-1)}}
\leq\tfrac{\varepsilon}{2}.
 \end{align*}
 Combining the bounds for \texttt{Term I, II}, we have $\|\nu_s\|\leq \varepsilon.$ The total calls to the iterations of the subroutines is bounded by 
\begin{align*}
    \textstyle\sum_{s=1}^S N_s&\overset{\eqref{eqn:cubic-samplesizeoptimal-h}}
% {=}\textstyle\sum_{s=1}^S\tfrac{ 4\left[2^{{p+\nu-1}}(p+\nu){C}_{\mathcal{A}}L_{p,\nu}(g)\right]^{\frac{1}{p+\nu}}}{\left(\frac{2^{(p+\nu-1)(s-1)}\varepsilon}{4^{(p+\nu-2)}{C}_{\mathcal{A}} D^{p+\nu-1}}\right)^{\frac{1}{p+\nu}}}+S
    \leq\tfrac{16\left[2(p+\nu){C}_{\mathcal{A}}L_{p,\nu}(g)D^{p+\nu-1}\right]^{\frac{1}{p+\nu}}}{{\left(1 - 2^{-1 + \frac{1}{p+\nu}}\right) \varepsilon^{\frac{1}{p+\nu}}}}+S.
%     \overset{\eqref{eqn:cubic-samplesizeoptimal-h}}{=}\mathcal{O}\left(
% \frac{D^{\frac{p+\nu-1}{p+\nu}}}{\varepsilon^{\frac{1}{p+\nu}}}\right).
\end{align*}
% Direct calculation concludes the proof.
  \end{proof}
 Similar to \Cref{ACN}, the regularization parameter $\sigma_s$ starts at $\mathcal{O}(\varepsilon/D^{p+\nu-1})$ and is
exponentially increasing, while the epoch length is exponentially decreasing, scaling inversely with respect to the regularization parameter.
\Cref{alg:main-composite} terminates when $\sigma_S$ has reached $L_{p,\nu}(g).$
Other choices of the regularization can also be used, with the number of epochs changing accordingly. 

By \Cref{thm:p-order-gradient-optimal-h}, we can accelerate the convergence rate of the gradient norm by employing various tensor-based methods as subroutines. Examples include the accelerated cubic Newton method~\cite{nesterov2008accelerating}, accelerated tensor methods \cite{nesterov2021implementable},
 the bi-level unconstrained minimization approach~\cite{nesterov2023inexact}, and accelerated proximal-point methods with lower-order solvers~\cite{nesterov2021superfast,nesterov2023inexact}. Notably, in \cite{nesterov2021superfast,nesterov2023inexact}, they show that second-order methods can achieve the same convergence rate as third-order methods in terms of the function-value residual. As a result, using such second-order methods as subroutines in the high-order AR framework \Cref{alg:main-composite} allows the convergence rate in terms of the gradient norm to match that of the function-value residual in \cite{nesterov2021superfast,nesterov2023inexact}. In what follows, we present the Accelerated Regularized Tensor Method (ARTM)~\cite{grapiglia2022tensor,grapiglia2020tensor} as an inner subroutine for solving~\eqref{eqn:composite-problem} with $\nu \in [0\, 1]$, which improves the gradient complexity bound of ARTM  
in~\citep[Theorem 5.7]{grapiglia2022tensor} by removing a logarithmic factor. The gradient complexities for the other methods follow by the same reasoning.
\begin{corollary}
    Suppose the subroutine $\mathcal{A}$ is  ARTM. Then, \Cref{alg:main-composite} can compute an approximate solution $x_S^k$ s.t. $\min_{k = N_S + 1, \dots, 2N_S} \|\nu_S^k\| \leq \varepsilon$, where $\nu_S^k \in \nabla g(x_S^k) + \partial h(x_S^k)$,  
within $\mathcal{O}(({D^{{p+\nu-1}{}}}/{\varepsilon})^{\frac{1}{p+\nu}}) $
calls to the $p$-th-order oracle,
where  $D$ is defined in \eqref{eqn:initial-optimality-gap}, and $p \geq 2,$ $\nu \in [0\,\, 1].$
\end{corollary}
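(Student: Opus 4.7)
The plan is to verify that the Accelerated Regularized Tensor Method (ARTM) satisfies \Cref{as:stronger-ANPE-tensor} when invoked on each regularized subproblem \eqref{eqn:sub-prox-composite}, and then invoke \Cref{thm:p-order-gradient-optimal-h} directly. Since $p\geq 2$ forces $p+\nu\geq 2$, the power prox-function regularizer is well-defined, and \Cref{lem:power-uniform-smoothness} guarantees that the added term $\sum_{i\leq s}[(\sigma_i-\sigma_{i-1})/(p+\nu)]\|x-x_{i-1}\|^{p+\nu}$ has $p$-th derivative that is $\nu$-Hölder continuous with constant of order $\sigma_s$. Hence the smooth part of $f_s$ remains $\nu$-Hölder smooth of order $p$ with constant at most $L_{p,\nu}(g)+O(\sigma_s)$, and on every epoch the choice of $\sigma_s$ in \eqref{eqn:p-th-number-epochs-h} is bounded by a constant multiple of $L_{p,\nu}(g)$, so the effective Hölder constant can be absorbed into the universal factor $C_{\mathcal{A}}$.

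Next I would verify condition \eqref{eqn:gradient-direct-bound-anpe-h}. This follows from the function-value residual analysis of ARTM in \cite{grapiglia2022tensor,grapiglia2020tensor}, which gives, after $N_s$ steps started at $x_{s-1}$, a bound of the shape $f_s(x_s)-f_s(x_s^*)\leq C\,L_{p,\nu}(g)\|x_s^*-x_{s-1}\|^{p+\nu}/(N_s-1)^{p+\nu}$ for a universal $C$, matching \eqref{eqn:gradient-direct-bound-anpe-h} after renaming constants. The main obstacle, and the novel ingredient of the corollary, is establishing the slow subgradient-norm bound \eqref{eq:6.14-assumption-h} for the last epoch. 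My plan is to refine the ARTM estimating-sequence argument exactly as done for ACNM in \Cref{lem:prop-cubic}: starting from the primal–dual gap produced by the estimating sequence and using the uniform convexity of the accumulated prox-regularizer, one extracts a bound $\min_{N_S<k\leq 2N_S}\|v_S^k\|\leq C\,L_{p,\nu}(g)\|x_S^*-x_{S-1}\|^{p+\nu-1}/(N_S-1)^{p+\nu-1}$ for some $v_S^k\in\partial f_S(x_S^k)$. The key point is to track the subgradient at the averaged iterate rather than bound it through an intermediate function-value reduction, which removes the extra logarithmic factor present in the analysis of \cite[Theorem 5.7]{grapiglia2022tensor}.

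With \eqref{eqn:gradient-direct-bound-anpe-h} and \eqref{eq:6.14-assumption-h} in hand, \Cref{lem: linear convergence-h} provides the geometric per-epoch contraction $\|x_s-x_s^*\|\leq \|x_{s-1}-x_{s-1}^*\|/4$ and its counterpart in the last epoch, so \Cref{thm:p-order-gradient-optimal-h} applies verbatim. With the parameter choices in \eqref{eqn:p-th-number-epochs-h}, the epoch lengths $N_s$ form a geometrically decreasing sequence dominated by $N_1=\Theta((L_{p,\nu}(g)D^{p+\nu-1}/\varepsilon)^{1/(p+\nu)})$, and summing over $s=1,\dots,S$ gives the claimed $p$-th-order oracle complexity $\mathcal{O}((D^{p+\nu-1}/\varepsilon)^{1/(p+\nu)})$. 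The expected technical difficulty is entirely concentrated in deriving \eqref{eq:6.14-assumption-h}; once this is in place the corollary reduces to bookkeeping in the proof of \Cref{thm:p-order-gradient-optimal-h}.
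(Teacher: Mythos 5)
Your overall structure—verify that ARTM satisfies \Cref{as:stronger-ANPE-tensor} and then invoke \Cref{thm:p-order-gradient-optimal-h}—is the same as the paper's, and your bookkeeping about the prox term's Hölder constant and the geometric summation of epoch lengths is fine. However, the paper's proof is a one-liner: it cites \citep[Theorem~5.7]{grapiglia2022tensor} as already establishing \emph{both} inequalities \eqref{eqn:gradient-direct-bound-anpe-h} and \eqref{eq:6.14-assumption-h} for ARTM, so no new estimating-sequence analysis is needed. You instead propose to re-derive \eqref{eq:6.14-assumption-h} from scratch "exactly as done for ACNM in \Cref{lem:prop-cubic}," which duplicates work the cited reference is taken to have done.

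More importantly, your explanation of where the logarithmic factor is removed is off. You write that tracking the subgradient at the averaged iterate "removes the extra logarithmic factor present in the analysis of \citep[Theorem 5.7]{grapiglia2022tensor}." But the per-epoch raw bounds of the form \eqref{eqn:gradient-direct-bound-anpe-h} and \eqref{eq:6.14-assumption-h} are log-free; the logarithmic penalty in \citep[Theorem 5.7]{grapiglia2022tensor} arises from their use of a fixed Nesterov-style regularization with $\sigma$ pinned by $\varepsilon$, which forces a $\log(1/\varepsilon)$ number of restarts at fixed length. The improvement in the corollary comes entirely from the accumulative regularization schedule of \Cref{alg:main-composite} — $\sigma_s$ increasing geometrically so that the epoch lengths $N_s$ are geometrically \emph{decreasing} and summable — which is the content of \Cref{thm:p-order-gradient-optimal-h}, not of a sharper subgradient extraction inside ARTM. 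Your proposal would still yield a correct proof if you carried out the claimed re-derivation, but it attributes the gain to the wrong mechanism and inserts an unnecessary and nontrivial step.
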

\begin{proof}
   \citep[Theorem 5.7]{grapiglia2022tensor} shows that  
\Cref{as:stronger-ANPE-tensor} holds for ARTM,  
thus \Cref{thm:p-order-gradient-optimal-h} applies.
\end{proof}
This result is the tightest known for convex and $p$-times differentiable functions with $\nu$-Hölder continuous $p$-th derivative. Although it does not match the lower complexity bounds for such function class \citep[Theorem~6.6]{grapiglia2022tensor}, given by 
$\Omega\left(\left({D^{{p+\nu-1}}}/{\varepsilon}\right)^{\frac{2}{3(p+\nu)-2}}\right).$
The gap arises because no existing methods have yet achieved the optimal rate in terms of the function-value residual for such function class. 
If such methods are developed, the high-order AR framework can incorporate them as subroutines $\mathcal{A}$ to accelerate the gradient-norm convergence and achieve a matching rate.

  \subsection{Fast gradient minimization for convex smooth function}\label{Sec:Optimal gradient minimization}
In this subsection, we study AR with optimal $p$-th-order methods as subroutines to handle composite convex  functions with 
Lipschitz continuous $p$-th derivatives.
 Specifically,  there exists \( L_{p+1}(g) > 0 \) s.t.
\begin{align}\label{assumption:L-p}
    \|D^p g(x)-D^p g(y)\|  \leq L_{p+1}(g)\|x-y\|,\quad \forall\,\, x, y\in\mathbb{R}^n.
\end{align}
We make the following assumption regarding the subroutine $\mathcal{A}_s.$
\begin{assumption}\label{as:stronger-ANPE}
   The approximate solution 
   $x_s$  exhibits the following
performance guarantees:  after $N_s$ iterations of the subroutine $\mathcal{A}_s,$ there holds
      \begin{align}
f_s({x_s})-f_s(x_s^*) &\leq \tfrac{c_{\mathcal{A}}L_{p+1}(g)\|x_s^*-{x_{s-1}}\|^{p+1}}{N_s^{\frac{3p+1}{2}}},\quad \forall\,\,1\leq s\leq S.
\label{eqn:gradient-direct-bound-anpe}
   \end{align}
 For the $S$-th epoch, for $N_S\leq k\leq 2N_S,$ 
   there exists $v_S^k
   \in \partial f_S({x}_S^k),$ such that
\begin{equation}\label{eq:6.14-assumption}
\min\limits_{k=N_S+1,\dots, 2N_S}\|v_S^k\|\leq\tfrac{{c}_{\mathcal{A}}L_{p+1}(g)\|x_{S}^*-x_{S-1}\|^{p}}{{N_S^{\frac{3p}{2}}}},
   \end{equation}
   where $c_{\mathcal{A}}$ is universal constants that depend on $\mathcal{A}_s.$
    \end{assumption}
    Compared with \Cref{as:stronger-ANPE-tensor}, \Cref{as:stronger-ANPE} imposes faster rates for both the function gap and the gradient norm. Note, however, that the gradient-norm rate is still slower than the function-value rate.
  The above assumption holds for a wide range of optimal algorithms with different $p.$ The subroutines examples include first-order algorithms: the accelerated gradient descent method (AGD) \cite{nesterov2018lectures}, second-order algorithms: large stepsize accelerated hybrid proximal extragradient \cite{monteiro2013accelerated}, and $p$-th-order algorithms: 
optimal tensor method (OTM) \cite{bubeck2019near, jiang2021optimal,gasnikov2019optimal} or  inexact $p$-th-order proximal point method with tensor step \cite{nesterov2021inexact,nesterov2023inexact}.

  We next show that with properly chosen  epoch lengths $\{N_s\}_{s\in[S]}$ and \Cref{as:stronger-ANPE}, high-order AR \Cref{alg:main-composite} can achieve the linear convergence in terms of distance to the optimal solution across epochs. The proof is similar to \Cref{lem: linear convergence-h}, thus omitted for simplicity.
\begin{lemma}\label{lem: linear convergence}
    Suppose $p\geq 1,$ and for all $s\in[S],$ the epoch length satisfies 
\begin{align}\label{eqn:cubic-samplesizeoptimal}
  N_s&\geq\left\lceil4\left(\tfrac{ (p+1)c_{\mathcal{A}}L_{p+1}(g)}{{\sigma_{s}}}\right)^{\frac{2}{3p+1}}\right\rceil,
 \end{align}
 where $D$ is defined in \eqref{eqn:initial-optimality-gap}.
 If the subroutine for solving subproblems satisfies \Cref{as:stronger-ANPE}. Then, for all $s\in[S],$
 \Cref{alg:main-composite} can compute $x_s$  such that $\|x_s-x_s^*\|\leq{\|x_{s-1}-x_{s-1}^*\|}/{4}.$
 Furthermore, in the last epoch $S,$ for all $N_S\leq k\leq 2N_S,$ we have $ \|x_S^k-x_S^*\|\leq {\|x_{S-1}-x_{S-1}^*\|}/{4}.$
\end{lemma}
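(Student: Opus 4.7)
The plan is to mirror the argument of \Cref{lem: linear convergence-h} specialized to $\nu = 1$, substituting the faster subroutine rate \eqref{eqn:gradient-direct-bound-anpe} from \Cref{as:stronger-ANPE} in place of \eqref{eqn:gradient-direct-bound-anpe-h}. The key observation is that the new epoch-length prescription \eqref{eqn:cubic-samplesizeoptimal}, whose exponent $2/(3p+1)$ replaces the previous $1/(p+\nu)$, is calibrated so that the improved function-gap rate of order $N_s^{-(3p+1)/2}$ still delivers the same per-epoch contraction factor of $1/4$ in the distance to the minimizer.

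The first step is to invoke the uniform convexity of $f_s$ about its minimizer $x_s^*$, which follows from \Cref{lem:power-uniform-smoothness} with $\nu = 1$ (degree $p+1$, uniform convexity coefficient $\tfrac{1}{p+1}(\tfrac{1}{2})^{p-1}$), together with $0 \in \partial f_s(x_s^*)$, to derive
\[
f_s(x_s) - f_s(x_s^*) \;\geq\; \tfrac{\sigma_s}{p+1}\left(\tfrac{1}{2}\right)^{p-1}\|x_s - x_s^*\|^{p+1}.
\]
Combining this lower bound with the upper bound \eqref{eqn:gradient-direct-bound-anpe} from \Cref{as:stronger-ANPE} and the chain $\|x_{s-1} - x_s^*\| \leq \|x_{s-1} - x_{s-1}^*\|$ from \eqref{eqn:consecutive-sol-dis} then yields
\[
\|x_s - x_s^*\|^{p+1} \;\leq\; \tfrac{(p+1)\,2^{p-1}\,c_{\mathcal{A}}\,L_{p+1}(g)}{\sigma_s\,N_s^{(3p+1)/2}}\,\|x_{s-1} - x_{s-1}^*\|^{p+1}.
\]
The epoch-length choice \eqref{eqn:cubic-samplesizeoptimal} forces $N_s^{(3p+1)/2} \geq 4^{(3p+1)/2}(p+1)\,c_{\mathcal{A}}\,L_{p+1}(g)/\sigma_s$, and since $4^{(3p+1)/2} = 2^{3p+1} = 2^{p-1}\cdot 4^{p+1}$, the prefactor is bounded by $1/4^{p+1}$, giving the desired contraction $\|x_s - x_s^*\| \leq \|x_{s-1} - x_{s-1}^*\|/4$.

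For the last-epoch statement, I would repeat the same uniform-convexity lower bound at each iterate $x_S^k$ for $N_S \leq k \leq 2N_S$, using the monotonicity in $k$ of the subroutine's function-gap guarantee \eqref{eqn:gradient-direct-bound-anpe} to conclude $f_S(x_S^k) - f_S(x_S^*) \leq c_{\mathcal{A}} L_{p+1}(g)\|x_S^* - x_{S-1}\|^{p+1}/N_S^{(3p+1)/2}$, and then rerun the same chain of inequalities with $N_S$ in place of $N_s$ to obtain $\|x_S^k - x_S^*\| \leq \|x_{S-1} - x_{S-1}^*\|/4$. I do not foresee any genuine obstacle: the argument is a routine calibration, and the only subtle point is the exact algebraic identity $4^{(3p+1)/2} = 2^{p-1}\cdot 4^{p+1}$ which makes the epoch-length prescription tight and reduces to the elementary computation $2^{p-1}\cdot 2^{2(p+1)} = 2^{3p+1}$.
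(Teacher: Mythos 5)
Your proposal is correct and follows exactly the argument the paper intends: the paper itself states that the proof is "similar to \Cref{lem: linear convergence-h}, thus omitted," and your write-out is precisely that specialization, with the uniform convexity coefficient $\tfrac{\sigma_s}{p+1}(\tfrac{1}{2})^{p-1}$ replacing the general one, the faster rate \eqref{eqn:gradient-direct-bound-anpe} replacing \eqref{eqn:gradient-direct-bound-anpe-h}, and the chain via \eqref{eqn:consecutive-sol-dis}. Your calibration identity $4^{(3p+1)/2} = 2^{p-1}\cdot 4^{p+1}$ correctly shows how the epoch-length prescription \eqref{eqn:cubic-samplesizeoptimal} absorbs the $2^{p-1}$ from uniform convexity and still delivers the $1/4$ contraction, and the last-epoch extension via monotonicity of $k\mapsto N_s^{-(3p+1)/2}$ matches the paper's treatment in the Hölder case.
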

  With \Cref{lem: linear convergence} in place, we have the following convergence results of the \Cref{alg:main-composite},
the proof is similar to \Cref{thm:p-order-gradient-optimal-h}, and thus we omit it for simplicity.
\begin{theorem}\label{thm:p-order-gradient-optimal}
Suppose the assumptions in \Cref{lem: linear convergence}, and \begin{align}\label{eqn:p-th-number-epochs}
S&\coloneqq\left\lceil\tfrac{2}{3p+1}\log_2\tfrac{{{C}_{\mathcal{A}} D^{p}L_{p+1}(g)}}{\varepsilon}\right\rceil+1,\quad \sigma_s\coloneqq\tfrac{2^{\frac{(s-1)(3p+1)}{2}}\varepsilon }{{C}_{\mathcal{A}} D^{p}},
% ,\quad S=\left\lceil\log_8\frac{64LD^2}{p\varepsilon}\right\rceil
 \end{align} 
 for all $s\in[S].$
  Then \Cref{alg:main-composite} can compute an approximate solution $x_S^k$ together with $\nu_S^k,$
  such that $\nu_S^k\in \partial f(x_S^k),$
  and $\min\limits_{k=N_S+1,\dots, 2N_S}\|\nu_S^k\|\leq \varepsilon$ 
after at most 
\begin{align}\label{eqn:total-complexity}
% \tfrac{8\left[(p+1)L_{p+1}(g)D^p\right]^{\frac{2}{3p+1}}}{\varepsilon^{\frac{2}{3p+1}}}
% =
\mathcal{O}\left(\left[\tfrac{L_{p+1}(g)D^p}{\varepsilon}\right]^{\frac{2}{3p+1}}\right)
 \end{align}
calls to the $p$-th-order oracle.
    \end{theorem}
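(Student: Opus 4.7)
The plan is to mirror the argument of \Cref{thm:p-order-gradient-optimal-h}, but now exploit the faster per-epoch rates in \Cref{as:stronger-ANPE}. Specifically, since $\nu_S^k = v_S^k - \sum_{i=1}^S(\sigma_i-\sigma_{i-1})\|x_S^k-x_{i-1}\|^{p-1}(x_S^k-x_{i-1})\in\partial f(x_S^k)$ in the case $\nu=1$, I would decompose the subgradient norm as
\begin{align*}
\min_{k=N_S+1,\dots,2N_S}\|\nu_S^k\| \leq \underbrace{\min_{k=N_S+1,\dots,2N_S}\|v_S^k\|}_{\texttt{Term I}} + \underbrace{\textstyle\sum_{i=1}^S(\sigma_i-\sigma_{i-1})\|x_S^k-x_{i-1}\|^{p}}_{\texttt{Term II}},
\end{align*}
and show each term is at most $\varepsilon/2$ under the parameter choices in \eqref{eqn:p-th-number-epochs}.

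For \texttt{Term I}, I would invoke \eqref{eq:6.14-assumption} of \Cref{as:stronger-ANPE} to get $\texttt{Term I}\leq c_{\mathcal{A}} L_{p+1}(g)\|x_S^*-x_{S-1}\|^{p}/N_S^{3p/2}$, then apply \eqref{eqn:consecutive-sol-dis} to bound $\|x_S^*-x_{S-1}\|\leq \|x_{S-1}^*-x_{S-1}\|$, and iterate the per-epoch contraction from \Cref{lem: linear convergence} to obtain $\|x_{S-1}^*-x_{S-1}\|\leq D/4^{S-1}$. Plugging the epoch-length lower bound \eqref{eqn:cubic-samplesizeoptimal} for $N_S$ (so that $N_S^{3p/2}\gtrsim [(p+1)c_{\mathcal{A}} L_{p+1}(g)/\sigma_S]^{p/(3p+1)}\cdot 4^{3p/2}$), the bound reduces to a multiple of $\sigma_S (\sigma_S D^{p+1}/4^{(p+1)S})^{p}$ modulo constants; with the choice $\sigma_S = 2^{(S-1)(3p+1)/2}\varepsilon/(C_{\mathcal{A}}D^p)$ from \eqref{eqn:p-th-number-epochs}, the exponents align so that the geometric factor cancels cleanly and \texttt{Term I}$\leq \varepsilon/2$.

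For \texttt{Term II}, the linear convergence $\|x_s-x_s^*\|\leq \|x_{s-1}-x_{s-1}^*\|/4$ established in \Cref{lem: linear convergence} and its extension to $k\geq N_S$ justify invoking \Cref{lem:linear} with $p+\nu=p+1$, yielding
\begin{align*}
    \texttt{Term II} \leq \tfrac{2\sigma_S D^{p}}{4^{p(S-2)}} + \tfrac{(18D)^{p}}{2}\textstyle\sum_{i=1}^S \tfrac{\sigma_i-\sigma_{i-1}}{4^{ip}}.
\end{align*}
Substituting $\sigma_s = 2^{(s-1)(3p+1)/2}\varepsilon/(C_{\mathcal{A}} D^p)$ into both pieces turns each summand into a geometric series whose ratio is less than $1$ (since $2^{(3p+1)/2}/4^{p}=2^{(1-p)/2}\leq 1$ for $p\geq 1$); the series therefore sum to constants times $\varepsilon$, and a careful choice of the leading constants gives \texttt{Term II}$\leq \varepsilon/2$.

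Finally, I would add the epoch lengths $\sum_{s=1}^S N_s$. Because $\sigma_s$ grows by the factor $2^{(3p+1)/2}$ per epoch, \eqref{eqn:cubic-samplesizeoptimal} makes $N_s$ decay geometrically with ratio $1/2$, so the sum is dominated by $N_1 \leq \lceil 4[(p+1)c_{\mathcal{A}} L_{p+1}(g)/\sigma_1]^{2/(3p+1)}\rceil$. Plugging $\sigma_1 = \varepsilon/(C_{\mathcal{A}} D^p)$ yields the claimed complexity $\mathcal{O}([L_{p+1}(g)D^p/\varepsilon]^{2/(3p+1)})$, with the additive $S=\mathcal{O}(\log(L_{p+1}(g)D^p/\varepsilon))$ absorbed in the big-O. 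The only delicate bookkeeping is aligning the exponent $(3p+1)/2$ in the geometric schedule of $\sigma_s$ with the accelerated $N_s^{-3p/2}$ rate in \eqref{eqn:gradient-direct-bound-anpe}--\eqref{eq:6.14-assumption}, which is precisely what the choices in \eqref{eqn:p-th-number-epochs} achieve; once that is verified, the remainder mirrors the proof of \Cref{thm:p-order-gradient-optimal-h}.
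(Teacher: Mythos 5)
Your proposal follows exactly the route the paper has in mind: the paper states that the proof of this theorem "is similar to \Cref{thm:p-order-gradient-optimal-h} and thus we omit it," and your plan mirrors that proof step for step, replacing \Cref{as:stronger-ANPE-tensor} with the accelerated rates of \Cref{as:stronger-ANPE}, using the same Term~I/Term~II decomposition, the same contraction/optimality lemmas \eqref{eqn:consecutive-sol-dis} and \Cref{lem:linear}, and the same geometric summation of $N_s$. Two minor points: in your intermediate estimate the exponent should be $\tfrac{3p}{3p+1}$, not $\tfrac{p}{3p+1}$ (since raising $N_S\gtrsim[\cdots]^{2/(3p+1)}$ to the power $3p/2$ gives $N_S^{3p/2}\gtrsim[\cdots]^{3p/(3p+1)}\cdot4^{3p/2}$), though this slip does not affect your final form; and the geometric ratio $2^{(1-p)/2}$ that you invoke for Term~II is strictly less than $1$ only when $p\geq2$, while at $p=1$ it equals $1$ and the sum picks up an extra factor of $S$ (this borderline case is glossed over in the paper as well, so it is not a gap in your reasoning relative to the paper's intended argument).
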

    
 We can utilize \Cref{thm:p-order-gradient-optimal} to derive optimal algorithms in terms of  (sub)gradient norm minimization 
for convex composite problem \eqref{eqn:composite-problem}
by employing various tensor-based methods as subroutines. We illustrate with the following examples.
\begin{corollary}
    Suppose the subroutine $\mathcal{A}$ is A-NPE \cite{monteiro2013accelerated}. Then, a third-order AR framework \Cref{alg:main-composite} can compute an approximate solution $x_S^k$ such that $\min_{k = N_S + 1, \dots, 2N_S} \|\nu_S^k\| \leq \varepsilon$, where $\nu_S^k \in \nabla g(x_S^k) + \partial h(x_S^k)$,  
within $$\mathcal{O}([{L_{3}(g)D^p}/{\varepsilon}]^{\frac{2}{7}} \log \tfrac{1}{\varepsilon}) $$
evaluations of the gradient and Hessian,
where  $D$ is defined in \eqref{eqn:initial-optimality-gap}.
\end{corollary}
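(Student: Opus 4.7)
The plan is to apply \Cref{thm:p-order-gradient-optimal} with $p=2$ and subroutine $\mathcal{A}$ being A-NPE, so the corollary reduces to (i) verifying \Cref{as:stronger-ANPE} for A-NPE on the cubically regularized subproblem \eqref{eqn:sub-prox-composite} and (ii) accounting for the per-outer-iteration bisection cost of A-NPE. For the subproblem regularity, observe that the aggregated regularizer is $\tfrac{1}{3}\sum_{i=1}^{s}(\sigma_i-\sigma_{i-1})\|x-x_{i-1}\|^3$, which by \eqref{eqn:power prox-function-smooth-3} has $4\sigma_s$-Lipschitz Hessian. Hence $f_s$ has Lipschitz Hessian with constant at most $L_3(g)+4\sigma_s$, and the choice of $S$ in \eqref{eqn:p-th-number-epochs} guarantees $\sigma_S=\Theta(L_3(g))$, so $L_3(f_s)=\Theta(L_3(g))$ uniformly in $s$.

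Next I would verify \Cref{as:stronger-ANPE} with $p=2$. The Monteiro--Svaiter analysis of A-NPE applied to $f_s$ from initialization $x_{s-1}$ yields, after $N_s$ outer iterations, an iterate $x_s$ with $f_s(x_s)-f_s(x_s^*)=\mathcal{O}(L_3(f_s)\|x_s^*-x_{s-1}\|^3/N_s^{7/2})$, matching \eqref{eqn:gradient-direct-bound-anpe} with $(3p+1)/2=7/2$. For the slower bound \eqref{eq:6.14-assumption}, I would use the hybrid proximal extragradient (HPE) residual inequality underlying A-NPE: at each iteration $k$, the method constructs $(x_S^k, v_S^k)$ with $v_S^k\in\partial f_S(x_S^k)$ plus a proximal-extragradient residual, and a standard best-iterate argument over the window $k\in[N_S+1,2N_S]$ gives $\min_k\|v_S^k\|=\mathcal{O}(L_3(f_s)\|x_S^*-x_{S-1}\|^2/N_S^3)$, matching \eqref{eq:6.14-assumption} with $3p/2=3$. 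The composite case (nonsmooth $h$) is absorbed by interpreting the ``gradient'' inside A-NPE's inner quadratic model as an element of $\partial f_s$ via a standard proximal subproblem on $h$.

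With \Cref{as:stronger-ANPE} in place, \Cref{thm:p-order-gradient-optimal} gives $\mathcal{O}([L_3(g)D^2/\varepsilon]^{2/7})$ outer A-NPE iterations summed across all $S$ epochs. Each outer A-NPE iteration performs a bisection on its step-size parameter to fulfill the HPE relative-error criterion, costing $\mathcal{O}(\log\varepsilon^{-1})$ gradient-and-Hessian evaluations per outer step when the target tolerance is $\varepsilon$. Multiplying the two factors yields the stated $\mathcal{O}([L_3(g)D^2/\varepsilon]^{2/7}\log\varepsilon^{-1})$ complexity. The main technical obstacle is establishing \eqref{eq:6.14-assumption} in the composite setting, since the published A-NPE gradient-norm rate is usually stated for smooth unconstrained problems; one must carry subgradients of $h$ through the HPE residual and then translate the residual sum into a minimum-subgradient-norm bound via the window-minimum argument, taking care that the uniform bound $L_3(f_s)\le cL_3(g)$ across epochs prevents the constants from accumulating with $s$.
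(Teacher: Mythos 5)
Your proof follows essentially the same route as the paper's: verify \Cref{as:stronger-ANPE} for A-NPE (the paper cites \cite[Theorem~4.1]{monteiro2013accelerated} for this in a one-line proof) and then apply \Cref{thm:p-order-gradient-optimal} with $p=2$, attributing the extra $\log(1/\varepsilon)$ factor to the per-iteration bisection in A-NPE. Your additional bookkeeping---the uniform bound $L_3(f_s)=L_3(g)+4\sigma_s=\Theta(L_3(g))$ across epochs, and the observation that \cite{monteiro2013accelerated} already provides a subgradient-residual bound in the composite setting---is a correct elaboration of details the paper leaves implicit, not a different argument.
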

\begin{proof}
   \citep[Theorem 4.1]{monteiro2013accelerated} shows that  
\Cref{as:stronger-ANPE} holds for A-NPE,  
thus \Cref{thm:p-order-gradient-optimal} applies.
\end{proof}

We next show a  gradient norm minimization using a second-order method for a third-order smooth problem, and show its
convergence rate of the gradient norm matches that of the function-value residual in \cite{nesterov2021inexact}.  For simplicity, we consider the unconstrained problem \eqref{eqn:main} in this example.
\begin{corollary}
 Suppose the subroutine $\mathcal{A}$ is the second order implementation of the inexact third-order proximal point method \cite{nesterov2021inexact}. Then, a fourth-order AR \Cref{alg:main-composite} can compute an approximate solution $x_S^k$ such that $$\min_{k = N_S + 1, \dots, 2N_S} \|\nabla f(x_S^k)\| \leq \varepsilon$$ 
within $\mathcal{O}([{L_{4}(f)D^3}/{\varepsilon}]^{\frac{1}{5}}) $
iterations,
where  $D$ is defined in \eqref{eqn:initial-optimality-gap}, at each iteration, it calls the second-order once and the first-order
oracle $\mathcal{O}(\log{\varepsilon^{-1}})$ times at most.
\end{corollary}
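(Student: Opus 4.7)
The plan is to invoke \Cref{thm:p-order-gradient-optimal} with $p=3$ after verifying that the second-order implementation of the inexact third-order proximal-point method of \cite{nesterov2021inexact, nesterov2023inexact} satisfies \Cref{as:stronger-ANPE} with $L_{p+1}(g) = L_4(f)$. With $p+\nu = 4$, the regularized subproblem \eqref{eqn:sub-prox-composite} has objective $f_s(x) = f(x) + \sum_{i=1}^{s} \tfrac{\sigma_i - \sigma_{i-1}}{4}\|x - x_{i-1}\|^{4}$. By \Cref{lem:power-uniform-smoothness} with $p+\nu = 4$, each power prox-function $\|x - x_{i-1}\|^{4}/4$ is uniformly convex of degree $4$ and has a Lipschitz continuous third derivative; hence $f_s$ is four-times smooth with constant $L_4(f_s) = L_4(f) + \mathcal{O}(\sigma_s)$ and uniformly convex of degree $4$ with parameter proportional to $\sigma_s$.

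Second, I would invoke the convergence analysis of the second-order implementation of the inexact third-order proximal point method applied to $f_s$. The analysis in \cite{nesterov2021inexact, nesterov2023inexact} gives a function-value decay of the form $f_s(x_s) - f_s(x_s^*) \leq c\, L_4(f_s)\, \|x_{s-1} - x_s^*\|^{4} / N_s^{5}$, which is exactly condition \eqref{eqn:gradient-direct-bound-anpe} since $(3p+1)/2 = 5$ for $p=3$. Because $f_s$ is four-times smooth, a standard cubic descent estimate yields the per-step decrease $f_s(x_s^k) - f_s(x_s^{k+1}) \gtrsim \|\nabla f_s(x_s^{k+1})\|^{4/3}/L_4(f_s)^{1/3}$, and averaging this over $k \in \{N_S + 1, \dots, 2N_S\}$ together with the previously established function-value rate produces the matching gradient-norm bound required by \eqref{eq:6.14-assumption}, namely $\min_k \|\nabla f_s(x_S^k)\| \lesssim L_4(f) \|x_{S-1} - x_S^*\|^{3}/N_S^{9/2}$.

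Third, once \Cref{as:stronger-ANPE} is verified, \Cref{thm:p-order-gradient-optimal} applied with $p = 3$ immediately yields the outer iteration count $\mathcal{O}([L_4(f) D^3/\varepsilon]^{1/5})$. For the per-iteration oracle cost, I would appeal directly to the construction of \cite{nesterov2021superfast, nesterov2023inexact}: each third-order proximal step is implemented by a single Hessian evaluation of $f_s$ combined with an inner binary search on the cubic regularization parameter that uses $\mathcal{O}(\log \varepsilon^{-1})$ first-order evaluations of $f_s$. Multiplying the outer bound by this per-iteration cost gives the claimed overall complexity.

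The main obstacle is the verification of the subgradient-norm condition \eqref{eq:6.14-assumption}: the function-value estimate \eqref{eqn:gradient-direct-bound-anpe} is essentially a direct quotation from \cite{nesterov2021inexact, nesterov2023inexact}, but translating it into a gradient-norm rate at the matching order $9/2$ requires carefully exploiting the cubic descent inequality together with a best-iterate (minimum over $N_S < k \leq 2N_S$) argument; all other steps are straightforward instantiations of the general framework.
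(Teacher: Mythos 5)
Your proof takes the same route as the paper: verify \Cref{as:stronger-ANPE} for the second-order implementation of the inexact third-order proximal point method with $p=3$, then apply \Cref{thm:p-order-gradient-optimal}. The paper simply cites \cite[Theorems~3.8 and 5.5]{nesterov2021inexact} for both the function-value bound \eqref{eqn:gradient-direct-bound-anpe} and the slow gradient-norm bound \eqref{eq:6.14-assumption}, whereas you additionally sketch a cubic-descent plus best-iterate argument to re-derive the gradient-norm rate of order $N_S^{-9/2}$; this is a valid way to fill in the citation but does not change the underlying strategy.
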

\begin{proof}
  \citep[Theorem 3.8, 5.5]{nesterov2021inexact} show that  
\Cref{as:stronger-ANPE} holds with $p=3$,  
thus \Cref{thm:p-order-gradient-optimal} applies.
\end{proof}

\smallskip

\section{Parameter-free algorithm for convex problems}\label{sec:parameter-free-convex}
All convergence results in the \Cref{sec:second-order-methods} and \Cref{sec:Optimal high-order methods for structured convex problems} are based on the exact solutions of the auxiliary optimization problem at each iteration of the high-order method, and assume access to certain key problem parameters, including the Lipschitz constant of the $\nu$-Hölder continuous $p$-th derivative $L_{p,\nu}(f)$, and an upper bound $D$ on the initial distance to the solution set, such that $\operatorname{dist}(x_0,X^*) \le D$. However, in general, the auxiliary subproblem at each iteration cannot be solved exactly, and those problem-dependent parameters are not known beforehand.
In this section, we investigate inexact and parameter-free implementations of high-order methods for gradient minimization that do not rely on such information. For clarity of exposition, we first focus on the unconstrained setting; analogous parameter-free strategies can be developed for problems with simple constraints or composite structure.

Denote 
$(s, k)$ as the pair referring to the $k$-th iteration of epoch $s,$ and 
$\mathcal{A}_s\coloneqq\mathcal{A} (f, \{\sigma_i\}_{i\leq s}, \{x_{i-1}\}_{i\leq s}).$
 \begin{algorithm}[!ht]
\caption{High-order AR  without knowledge of  $L_{p,\nu}(f)$}\label{alg:main-composite-pf}
\begin{algorithmic}[1]
 \Require Initial regularization $\sigma_0=0, \sigma_1>0;$ initial point $x_0\in\mathbb{R}^n;$ initial line-search value $L_0.$
  \For{$s=1,2,\dots$} 
  \State If $s>1,$ set $\sigma_s=2^{p+\nu-1}\sigma_{s-1}.$
    \State 
    Compute an approximate solution $x_s$   of $f_s(x)$ by running $\mathcal{A} _s$ with the  initialization $(x_{s-1}, L_{s-1}).$
\begin{align}\label{eqn:sub-prox-composite-pf}
 x_s\approx \argmin\limits_{x\in \mathbb{R}^n} \left\{ f_s(x)\coloneqq f(x)+\textstyle\sum_{i=1}^s\tfrac{\sigma_i-\sigma_{i-1}}{p+\nu}\|x-{x}_{i-1}\|^{p+\nu}\right\},
    \end{align}
\State and compute $L_s=L_{{s-1},{N_{s-1}}},$ the line-search value at $(s-1, N_{s-1}).$
Here $p\geq 1,  \nu\in[0,1],$ and $p+\nu\geq 2.$ 
% and initial line-search value $L_{s-1}\coloneqq L_{{s-1},{N_{s-1}}}.$ Here $L_{{s-1},{N_{s-1}}}$ is the line-search value at $(s-1, N_{s-1}).$
 % Let  $\mathcal{A}_s$  choose an iteration index $N_s$ as $k,$ and output the 
 %   $k$-th iterate as 
 %    $x_s,$  its line-search value $L_{s, k}$ when
 Let $k$ be the smallest integer satisfying
 \begin{equation*}
    k\geq 8\left[\tfrac{L_{{s},{k}}(p+\nu)}{4\sigma_s}\right]^{\frac{1}{p+\nu}}+1.
    \end{equation*}
    \State Set $N_s\coloneqq k$, and output  $x_s=x_{s,N_s}$ with  its line-search value $L_{s, N_s}.$ 
    \State Run an additional $N_s$
iterations to record 
    $L_{s,{2N_s}}.$
        \State If $\sigma_s\geq \tfrac{(L_{s,{2N_s}})^{(p+\nu)^2}}{(L_{s,{N_s}})^{(p+\nu-1)(p+\nu+1)}},$ then {\bf terminate} with $\widehat{x}=\argmin\limits_{N_s<k\leq 2N_s}\{\|\nabla f(z_s^{k})\|\}.$
     \EndFor
 \State \Return{$\widehat{x}$}
\end{algorithmic}
\end{algorithm}

We make the following assumption on the subroutine $\mathcal{A}_s$ used to solve \eqref{eqn:sub-prox-composite-pf}.
\begin{assumption}\label{as:stronger-ANPE-tensor-pf}
   The subroutine $\mathcal{A}_s$   has the following performance guarantee: for any $k> 1,$ 
      \begin{align}
f_s({x_s^k})-f_s(x_s^*) &\leq \tfrac{L_{s,k}\|{x_{s-1}}-x_s^*\|^{p+\nu}}{(k-1)^{p+\nu}},\quad \forall\,\,1\leq s\leq S,
\label{eqn:gradient-direct-bound-anpe-h-pf}
   \end{align}
   where $x_s^k$ is the computed approximate solution and $L_{s,k}$ is a local estimate of Hölder constant of $f$ at $(s, k)$ such that \begin{equation}\label{eqn:estimate-line-earch-1}L_{s,k}\leq c_{\mathcal{A}} \max\{pL_{p,\nu}(f), L_0, \theta \},
   \end{equation}
   where $c_{\mathcal{A}}$
   is a universal constant  depending on the subroutine $\mathcal{A}_s,$ and $\theta$ is a user-defined subproblem inexactness parameter controlling the inner accuracy in $\mathcal{A}_s.$
   For each $s$-th epoch, for $N_s\leq k\leq 2N_s,$
   there exists sequences $\{{z}_s^k\}$ with $f({z}_s^k)\leq f(x_s^k)$ and
   $\nabla f_s({z}_s^k), $
   such that
\begin{equation}\label{eq:6.14-assumption-h-pf}
\min\limits_{k=N_s+1,\dots, 2N_s}\|\nabla f_s({z}_s^k)\|\leq \tfrac{ L_{s, 2N_s}\|x_{s-1}-x_s^*\|^{p+\nu-1}}{(2N_s-1)^{\frac{(p+\nu-1)(p+\nu+1)}{p+\nu}}},
   \end{equation}
  where $L_{s,2N_s}$ is a local estimate of Hölder constant at $(s, 2N_s)$ such that
  \begin{equation}\label{eqn:estimate-line-search-2}
      L_{s,{2N_s}}\leq c_{\mathcal{A}} \max\{pL_{p,\nu}(f), L_0, \theta\}.
  \end{equation}
    \end{assumption}
Notice that \Cref{as:stronger-ANPE-tensor-pf} relaxes \Cref{as:stronger-ANPE-tensor} in two aspects. First, it does not require knowledge of $L_{p,\nu}(f);$ instead, it uses local estimate $L_{s,k}.$ Second, it allows inexact inner solution, quantified by $\theta,$ where smaller $\theta$ means a more accurate solution for each iteration.
This assumption can be easily satisfied by many adaptive algorithms, for example accelerated tensor methods \cite{grapiglia2020tensor,grapiglia2022tensor,grapiglia2023adaptive,He2025HAR}.

\begin{theorem}\label{thm:p-order-gradient-optimal-h-pf}
Suppose the subroutine satisfies \Cref{as:stronger-ANPE-tensor-pf}.
Given $x_0, L_0$ and $\sigma_1,$ and suppose
$\sigma_1\leq c_{\mathcal{A}}.$
Furthermore, choose the regularization parameter $ \sigma_s=2^{p+\nu-1}\sigma_{s-1},$ for all $s\geq 2.$ Suppose
the epoch length $N_s$ as
\begin{align}\label{eqn:sample-size-pf}
 N_s\geq 8\left[\tfrac{L_{s,N_s}(p+\nu)}{4\sigma_s}\right]^{\frac{1}{p+\nu}}+1,\quad\forall s\geq1.
\end{align}
 Then \Cref{alg:main-composite-pf} can compute an approximate solution  $\widehat{x}$ such that
 \begin{align}\label{eqn:gradient-sigma}
     \|\nabla f(\widehat{x})\|\leq 3\sigma_1[9\cdot\textnormal{dist}(x_0, X^*)]^{p+\nu-1}
 \end{align}
after $\mathcal{O}\left(\left[\tfrac{\max\{{pL_{p,\nu}}(f), L_0, \theta \}}{\sigma_{1}}\right]^{\frac{1}{p+\nu}}\right)$
{calls to the $p$-th-order oracle.}
    \end{theorem}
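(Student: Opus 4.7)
The plan is to extend the analysis of \Cref{thm:p-order-gradient-optimal-h} to the parameter-free setting in which $L_{p,\nu}(f)$ and the distance-to-optimum bound $D$ are both unknown. The first stage is to establish the per-epoch linear contraction $\|x_s - x_s^*\| \le \|x_{s-1} - x_{s-1}^*\|/4$, mirroring \Cref{lem: linear convergence-h} but with the adaptive epoch length \eqref{eqn:sample-size-pf} in place of \eqref{eqn:cubic-samplesizeoptimal-h}. Combining the uniform convexity of $f_s$ with the subroutine guarantee \eqref{eqn:gradient-direct-bound-anpe-h-pf} yields $\|x_s^{N_s}-x_s^*\|^{p+\nu} \le (p+\nu) L_{s,N_s} 2^{p+\nu-2}\|x_{s-1}-x_s^*\|^{p+\nu}/[\sigma_s(N_s-1)^{p+\nu}]$, and substituting the definition of $N_s$ produces the desired factor $4^{-(p+\nu)}$. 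Using the analog of the optimality-condition inequality from the proof of \Cref{ACN}, $\|x_{s-1} - x_s^*\| \le \|x_{s-1} - x_{s-1}^*\|$, iteration then gives $\|x_{s-1} - x_{s-1}^*\| \le 4^{-(s-1)} \operatorname{dist}(x_0, X^*)$.

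The second stage uses the termination test to bound $\|\nabla f_S(z_S^k)\|$ at the termination epoch $S$. Starting from \eqref{eq:6.14-assumption-h-pf}, the epoch length \eqref{eqn:sample-size-pf} implies $(2N_S-1)^{(p+\nu-1)(p+\nu+1)/(p+\nu)}$ is at least a constant multiple of $[L_{S,N_S}/\sigma_S]^{(p+\nu-1)(p+\nu+1)/(p+\nu)^2}$, yielding $\|\nabla f_S(z_S^k)\| \le C\, L_{S,2N_S}(\sigma_S/L_{S,N_S})^{(p+\nu-1)(p+\nu+1)/(p+\nu)^2}\|x_{S-1}-x_S^*\|^{p+\nu-1}$. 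The termination criterion $\sigma_S \ge L_{S,2N_S}^{(p+\nu)^2}/L_{S,N_S}^{(p+\nu-1)(p+\nu+1)}$ is built precisely so that after substitution the surviving exponent of $\sigma_S$ is exactly $1$, producing $\|\nabla f_S(z_S^k)\| \le C' \sigma_S \|x_{S-1}-x_S^*\|^{p+\nu-1}$. Combining with the doubling schedule $\sigma_S = 2^{(p+\nu-1)(S-1)}\sigma_1$ and the geometric decay $\|x_{S-1}-x_{S-1}^*\| \le 4^{-(S-1)}\operatorname{dist}(x_0, X^*)$ makes all $S$-dependent factors cancel, leaving a bound of order $\sigma_1 \operatorname{dist}(x_0, X^*)^{p+\nu-1}$.

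For the accumulation term $\sum_{i=1}^S(\sigma_i - \sigma_{i-1})\|z_S^k - x_{i-1}\|^{p+\nu-1}$ in the subgradient decomposition \eqref{eqn: error-decompose}, I would follow \Cref{lem:linear}, splitting out the dominant term centered at $x_{S-1}$ and using the doubling schedule $\sigma_i = 2^{p+\nu-1}\sigma_{i-1}$ together with the linear contraction from the first stage. Each piece again contributes $\mathcal{O}(\sigma_1 \operatorname{dist}(x_0, X^*)^{p+\nu-1})$, and tracking the constants yields the stated bound $3\sigma_1[9\operatorname{dist}(x_0, X^*)]^{p+\nu-1}$. The total oracle count is then bounded by summing $\sum_s N_s$: since \eqref{eqn:estimate-line-earch-1} and \eqref{eqn:estimate-line-search-2} give $L_{s,\cdot} \le c_{\mathcal{A}}\max\{pL_{p,\nu}(f), L_0, \theta\}$ and $\sigma_s$ grows by a factor $2^{p+\nu-1}$ each epoch, $N_s$ decays geometrically in $s$ and is dominated by $N_1 = \mathcal{O}([\max\{pL_{p,\nu}(f), L_0, \theta\}/\sigma_1]^{1/(p+\nu)})$. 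The number of epochs $S$ is logarithmic in the problem data, so the $+1$ ceilings contribute only a lower-order $\mathcal{O}(S)$ that is absorbed.

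The hardest step is the second: one must verify both that the unusual termination criterion $\sigma_S \ge L_{S,2N_S}^{(p+\nu)^2}/L_{S,N_S}^{(p+\nu-1)(p+\nu+1)}$ is calibrated so that the $\sigma_S$ exponents in the gradient bound cancel to exactly $1$ (otherwise the bound would degrade geometrically in $S$), and that it is eventually satisfied even though the local line-search values $L_{s,k}$ are not a priori monotone across $k$ or $s$. A delicate auxiliary point is controlling the possible gap between $L_{s,N_s}$ and $L_{s,2N_s}$, since the argument requires both to be comparable to the universal upper bound from \eqref{eqn:estimate-line-earch-1} and \eqref{eqn:estimate-line-search-2} to guarantee both finite termination and the claimed oracle complexity.
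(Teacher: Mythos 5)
Your proposal is correct and mirrors the paper's proof structure essentially step for step: same decomposition into $\|\nabla f_S(z_S^k)\|$ plus the accumulation term, the same per-epoch contraction $\|x_s-x_s^*\|\le\|x_{s-1}-x_{s-1}^*\|/4$ via uniform convexity and the optimality inequality $\|x_{s-1}-x_s^*\|\le\|x_{s-1}-x_{s-1}^*\|$, the same use of the termination criterion so that the $L_{S,N_S}$-powers cancel and the surviving $\sigma_S$-exponent is exactly $1$, the same invocation of \Cref{lem:linear} for the accumulation term, and the same geometric summation of $N_s$ dominated by $N_1$. The only concern you flag but leave open — that the termination test is eventually met despite non-monotone $L_{s,k}$ — is exactly what the paper resolves by combining the geometric growth $\sigma_s=2^{(p+\nu-1)(s-1)}\sigma_1$ with the uniform upper bound \eqref{eqn:estimate-line-search-2} on $L_{s,2N_s}$ and a lower bound $L_{s,N_s}\ge L_0$, which also gives the logarithmic bound on $S$ that makes the $+S$ term lower order.
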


\begin{proof}
Observe that the gradient norm $\|\nabla f(z_S^k)\|$ can be decomposed  as follows.
\begin{align*}
&\min\limits_{k=N_S+1,\dots, 2N_S}\|\nabla f(z_S^k)\|\notag\\
&\leq \underbrace{\min\limits_{k=N_S+1,\dots, 2N_S}\|\nabla f_S(z_S^k)\|}_{\texttt{Term I}}+\underbrace{\textstyle\sum_{i=1}^S(\sigma_i-\sigma_{i-1})\|x_S^k-{x}_{i-1}\|^{p+\nu-1}}_{\texttt{Term II}},
 \end{align*} 
We start with bounding \texttt{Term II}.  By the uniform convexity of $f_s$ and the optimality condition $\nabla f_s(x_s^*)= 0,$ we have
\begin{align}\label{eqn:uniform-convexity-fs-pf}
    f_s(x_s)-f_s(x_s^*)
    &\geq \tfrac{\sigma_s}{p+\nu}\left(\tfrac{1}{2}\right)^{p+\nu-2}\|x_s-x_s^*\|^{p+\nu}, \quad\forall s\in[S].
\end{align}
Similar to the proof of \Cref{ACN}, by using the optimality condition of \eqref{eqn:sub-prox-composite-pf}, for all $s\geq 1,$ $s\in[S],$ we have
\begin{align}
        \|x_{s-1}-x_s^*\|&\leq \|x_{s-1}-x_{s-1}^*\|.\label{eqn:consecutive-sol-dis-pf}
        \end{align}
Combining this with \eqref{eqn:gradient-direct-bound-anpe-h-pf} in \Cref{as:stronger-ANPE-tensor-pf}, we have 
\begin{align}\label{eqn:linear-per-epoch-function-h-pf}
\tfrac{\sigma_s}{p+\nu}\left(\tfrac{1}{2}\right)^{p+\nu-2}\|x_s-x_s^*\|^{p+\nu}
% &\leq f_s(x_s)-f_s(x_s^*)\notag\\
&\overset{\eqref{eqn:gradient-direct-bound-anpe-h-pf}}{\leq} \tfrac{L_{s,k}\|{x_{s-1}}-x_{s-1}^*\|^{p+\nu}}{(k-1)^{p+\nu}}.
 \end{align}
Combining this with the subroutine sample size condition \eqref{eqn:sample-size-pf},
we have
\begin{align}\label{eqn:linear-pf}
    \|{x_{s}}-x_{s}^*\|\leq\tfrac{1}{4} \|{x_{s-1}}-x_{s-1}^*\|.
\end{align}
Similarly, notice that in each epoch $s$,  we have 
  \begin{align*}
     \tfrac{\sigma_s}{p+\nu}\left(\tfrac{1}{2}\right)^{p+\nu-2}\|z_s^k-x_s^*\|^{p+\nu}&\,\,\leq   f_s(z_s^k)-f_s(x_s^*)\notag\\
     &\overset{\text{A.3}}{\leq}f_s(x_s^k)-f_s(x_s^*)\overset{\eqref{eqn:gradient-direct-bound-anpe-h-pf}}{\leq} \tfrac{L_{s,k}\|x_s^*-{x_{s-1}}\|^{p+\nu}}{(k-1)^{p+\nu}}\notag\\
&\overset{\eqref{eqn:consecutive-sol-dis-pf}}{\leq} \tfrac{L_{s,k}\|x_{s-1}^*-{x_{s-1}}\|^{p+\nu}}{(k-1)^{p+\nu}}.
  \end{align*}
  Combining this with the subroutine sample size condition \eqref{eqn:sample-size-pf},
we have $ \|{z_{s}^k}-x_{s}^*\|\leq\tfrac{1}{4} \|{x_{s-1}}-x_{s-1}^*\|.$
Therefore, the conditions of \Cref{lem:linear} holds, thus, we have
\begin{align*}
\texttt{Term II}
&\,\,\,\,\leq\tfrac{2\sigma_S\|x_0-x^*\|^{p+\nu-1}}{4^{(p+\nu-1)(S-2)}}+\tfrac{(18\|x_0-x^*\|)^{p+\nu-1}}{2}
\textstyle\sum_{i=1}^S\tfrac{\sigma_i-\sigma_{i-1}}{4^{i(p+\nu-1)}}\notag\\
&\,\,\overset{\text{(i)}}{\leq}\tfrac{2\cdot 2^{(p+\nu-1)(S-1)}\sigma_1 \|x_0-x^*\|^{p+\nu-1}}{4^{(p+\nu-1)(S-2)}}+\tfrac{(18\|x_0-x^*\|)^{p+\nu-1}}{2}\textstyle\sum_{i=1}^S\tfrac{2^{(p+\nu-1)(i-1)}\sigma_1}{4^{i(p+\nu-1)}}\notag\\
&\,\,\overset{\text{(ii)}}{\leq} 2\sigma_1(9\|x_0-x^*\|)^{p+\nu-1},
 \end{align*}
 where $x^*$ is any solution in the optimal solution set $X^*.$
 In (i), we substitute the choice for $\sigma_s=2^{p+\nu-1}\sigma_{s-1}.$ In (ii), we used $p+\nu\geq 2,$ and $S\geq 1.$ It remains to bound \texttt{Term I}. 
 Substituting the slow gradient norm convergence rate from 
 \Cref{as:stronger-ANPE-tensor-pf}, we have 
\begin{align*}
(\texttt{Term I})^{p+\nu}
&\overset{\eqref{eq:6.14-assumption-h-pf},\eqref{eqn:consecutive-sol-dis-pf}}{\leq} \left[\tfrac{L_{S, 2N_S}\|x_{S-1}^*-x_{S-1}\|^{p+\nu-1}}{(2N_S-1)^{\frac{(p+\nu-1)(p+\nu+1)}{p+\nu}}}\right]^{p+\nu}
\notag\\
 &\,\,\,\,\,\overset{\eqref{eqn:sample-size-pf}}{\leq} \left[\tfrac{L_{S, 2N_S}(4\sigma_S)^{\frac{(p+\nu-1)(p+\nu+1)}{(p+\nu)^2}}\|x_{S-1}^*-x_{S-1}\|^{p+\nu-1}}{16^{\frac{(p+\nu-1)(p+\nu+1)}{p+\nu}}[L_{S, N_S}(p+\nu)]^{\frac{(p+\nu-1)(p+\nu+1)}{(p+\nu)^2}}}\right]^{p+\nu}
\notag\\
 &\,\,\,\,\,\,\overset{\text{(iii)}}{\leq} \sigma_S\left(\tfrac{{16\sigma_S\|x_{S-1}^*-x_{S-1}\|^{p+\nu}}{}}{16^{p+\nu}(p+\nu)}\right)^{p+\nu-1}
\overset{\text{(iv)}}{\leq} \sigma_S\left(\tfrac{  16\sigma_{S} \|x_0-x^*\|^{p+\nu}}{{ (p+\nu)}{ 4^{(p+\nu)(S+1)}}}\right)^{p+\nu-1},
\end{align*}
where in (iii), we used the termination rule in \Cref{alg:main-composite-pf}. In (iv),
we substitute the linear convergence of $\|x_s-x_s^*\|$ from \eqref{eqn:linear-pf}. 
Substituting the choice of $\sigma_S=2^{(p+\nu-1)(S-1)}\sigma_1,$ we have
\begin{align}
    \texttt{Term I}&\leq 2^{(p+\nu-1)(S-1)}\sigma_1\left(\tfrac{  16\|x_0-x^*\|^{p+\nu}}{{ (p+\nu)}{ 4^{(p+\nu)(S+1)}}}\right)^{\frac{p+\nu-1}{p+\nu}}\leq \sigma_1\|x_0-x^*\|^{p+\nu-1}.
\end{align}
Combining the errors from \texttt{Term I, II}, we have
\begin{align*}
    \|\nabla f(\widehat{x})\|&=\min\limits_{k=N_S+1,\dots, 2N_S}\|\nabla f(z_S^k)\|\notag\\
    &\leq 2\sigma_1(9\|x_0-x^*\|)^{p+\nu-1} +\sigma_1\|x_0-x^*\|^{p+\nu-1}\leq 3\sigma_1(9\|x_0-x^*\|)^{p+\nu-1} .
\end{align*}
Furthermore, by 
the termination rule in \Cref{alg:main-composite-pf}, we have
\begin{align}
\tfrac{(L_{S,{2N_S}})^{(p+\nu)^2}}{(L_{S,{N_S}})^{(p+\nu-1)(p+\nu+1)}}&\overset{\eqref{alg:main-composite-pf}}{\leq} \sigma_S=\sigma_12^{(p+\nu-1)(S-1)}\notag\\
&\;\,=2^{p+\nu-1} \sigma_{S-1}\overset{\eqref{alg:main-composite-pf}}{\leq} 2^{p+\nu-1}\tfrac{(L_{s-1,{2N_{s-1}}})^{(p+\nu)^2}}{(L_{s-1,{N_{s-1}}})^{(p+\nu-1)(p+\nu+1)}} \notag\\
&\overset{\eqref{eqn:estimate-line-search-2}}{\leq} 2^{p+\nu-1}c_{\mathcal{A}} \max\{pL_{p,\nu}(f), L_0, \theta\}^{(p+\nu)^2}\left(\tfrac{1}{L_0}\right)^{(p+\nu-1)(p+\nu+1)}.
\end{align}
  Therefore, we have
  \begin{align*}
      S&\leq 2+\tfrac{(p+\nu)^2}{{p+\nu-1}}\log_{2}\left(\tfrac{c_{\mathcal{A}}^{\frac{1}{(p+\nu)^2}} \max\{pL_{p,\nu}(f), L_0, \theta\}}{\sigma_1^{\frac{1}{(p+\nu)^2}}}\right)-(p+\nu+1)\log_{2}L_0.
  \end{align*}
 The RHS is positive as $\sigma_1\leq c_{\mathcal{A}}.$ This is mild as $\sigma_1$ serves as the accuracy precision (small) and $c_{\mathcal{A}}$ is a subroutine-dependent constant.
The total oracle calls read
\begin{align*}
\textstyle\sum_{s=1}^S2N_s&\overset{\eqref{eqn:sample-size-pf}}
\leq16\textstyle\sum_{s=1}^S\left[\tfrac{L_{s,N_s}(p+\nu)}{4\sigma_s}\right]^{\frac{1}{p+\nu}}+S\notag\\
% &
% \leq16\textstyle\sum_{s=1}^S\left[\tfrac{c_{\mathcal{A}} \max\{L_{p,\nu}(f), L_0, \theta \}(p+\nu)}{4\sigma_s}\right]^{\frac{1}{p+\nu}}+S\notag\\
&\overset{\eqref{eqn:estimate-line-earch-1}}
\leq16\left[\tfrac{c_{\mathcal{A}} \max\{pL_{p,\nu}(f), L_0, \theta \}(p+\nu)}{4\sigma_{1}}\right]^{\frac{1}{p+\nu}}\textstyle\sum_{s=1}^S\left[\tfrac{1}{2^{(p+\nu-1)(s-1)}}\right]^{\frac{1}{p+\nu}}+S\notag\\
&\,\,
\leq32\left[\tfrac{c_{\mathcal{A}} \max\{pL_{p,\nu}(f), L_0, \theta \}(p+\nu)}{4\sigma_{1}}\right]^{\frac{1}{p+\nu}}+S.
\end{align*}
This concludes the proof.
  \end{proof}
  The expression \eqref{eqn:gradient-sigma}  clarifies the dependence of the convergence rate on the initial regularization parameter $\sigma_1.$ Specifically, the following comments are in order.
\begin{itemize}
    \item[\bf 1)] {\bf Matching the convergence rate of the function-value:}
      If we choose  $\sigma_1={\varepsilon}/{\{3[9\cdot\operatorname{dist}(x_0, X^*)]^{p+\nu-1}\}},$ then by \Cref{thm:p-order-gradient-optimal-h-pf}, \Cref{alg:main-composite-pf} can compute an approximate solution  $\widehat{x}$ such that $\|\nabla f(\widehat{x})\|\leq\varepsilon$ after 
\begin{align}\label{eqn:perfect-D}
\mathcal{O}\left(\left[\tfrac{\max\{{pL_{p,\nu}}(f), L_0, \theta \}\operatorname{dist}(x_0, X^*)^{p+\nu-1}}{\varepsilon}\right]^{\frac{1}{p+\nu}}\right)
     \end{align}
{calls to the $p$-th-order oracle.} This convergence result for inexact and adaptive gradient minimization is new and matches the faster function-value rate in the subroutine $\mathcal{A}$ (cf. \Cref{as:stronger-ANPE-tensor-pf}), see for example, \cite{grapiglia2020tensor,grapiglia2022tensor, jiang2020unified,grapiglia2023adaptive}. It improves over the current convergence result for adaptive and inexact gradient minimization as \eqref{eq:6.14-assumption-h-pf} shown in \cite{grapiglia2023adaptive}. 
 \item[\bf 2)] {\bf Unknown initial optimality gap $D$:}
However, in general $\operatorname{dist}(x_0, X^*)$ is unknown, and we may use $D$ to estimate it. If $D>\operatorname{dist}(x_0, X^*),$ by choosing
 $\sigma_1={\varepsilon}/[3(9D)^{p+\nu-1}],$ we can compute an approximate solution   $\widehat{x}$ using \Cref{alg:main-composite-pf}  such that $\|\nabla f(\widehat{x})\|\leq\varepsilon$ after $$\mathcal{O}\left(\left[\tfrac{\max\{{pL_{p,\nu}}(f), L_0, \theta \}D^{p+\nu-1}}{\varepsilon}\right]^{\frac{1}{p+\nu}}\right)$$
{calls to the $p$-th-order oracle,}
 which is worse than \eqref{eqn:perfect-D}. Next we show that we can avoid such case by choosing $D$ properly. Specifically, run $\mathcal{A}_1$ with $N_1=1,$
set $D^{p+\nu-1}\coloneqq \|\nabla f_1({x}_1^2)\|/L_{1,2},$ by \eqref{eq:6.14-assumption-h-pf}, we have
 \begin{equation*}
\|\nabla f_1({x}_1^2)\|\leq{ L_{1,2}\|x_{0}-x_1^*\|^{p+\nu-1}}\overset{\eqref{eqn:consecutive-sol-dis-pf}}{\leq}{ L_{1,2}\|x_{0}-x^*\|^{p+\nu-1}},
   \end{equation*}
 where $x_1^2$ is the computed approximate solution at epoch $1$ after $2$ iterations, and
 $x^*$ is any optimal solution in $X^*.$ Therefore, we have $D\leq \operatorname{dist}(x_0, X^*).$
 If $D<\operatorname{dist}(x_0, X^*),$
 by choosing
 $\sigma_1={\varepsilon}/[3(9D)^{p+\nu-1}],$ the computed approximate solution $\widehat{x}$ only satisfies $\|\nabla f(\widehat{x})\|\leq \varepsilon[\operatorname{dist}(x_0, X^*)/{D}]^{p+\nu-1}.$ Therefore, it does not attain the desired accuracy $\varepsilon.$ In this case, we simply discard all previous results and restart the computation with a larger guess $D,$ this increasing $D$ schedule is a ``guess-and-check'' procedure. It was first used for parameter-free optimal gradient minimization with first-order methods \cite{lan2024optimalparameterfreegradientminimization}. We will show below that such a strategy maintains the same oracle complexity as if $\operatorname{dist}(x_0,X^*)$ were known, while still guaranteeing $\|\nabla f(\widehat{x})\|\le \varepsilon$.
\end{itemize}
\begin{algorithm}[!ht]
\caption{A guess-and-check implementation of \Cref{alg:main-composite-pf}}\label{alg:main-composite-pf-gc-D}
\begin{algorithmic}[1]
 \Require Initial $\sigma_0=0,$ $x_0\in\mathbb{R}^n,$ and $L_0.$ Target accuracy $\varepsilon.$
 \State Run the subroutine $\mathcal{A}_s$ with $s=1,N_1=1,$ define $D_0^{p+\nu-1}\coloneqq \|\nabla f_1({x}_1^2)\|/L_{1,2}.$ 
  \For{$t=1,2\dots$} 
  \State Set $D_t=4 D_{t-1}.$
  \State Compute $\widehat{x}\leftarrow \text{\Cref{alg:main-composite-pf}} (f, x_0, {\varepsilon}/{[3(9D_t)^{p+\nu-1}]}).$
  \State If $\|\nabla f(\widehat{x})\|\leq \varepsilon,$ then {\bf terminate} with $\widehat{x}.$
     \EndFor
\end{algorithmic}
\end{algorithm}
 We have the following convergence guarantee for \Cref{alg:main-composite-pf-gc-D}.
\begin{theorem}
    Suppose the assumptions in \Cref{thm:p-order-gradient-optimal-h-pf}. Then \Cref{alg:main-composite-pf-gc-D} computes a solution $\widehat{x}$ such that $\|\nabla f(\widehat{x})\|\leq \varepsilon$
    within at most \eqref{eqn:perfect-D}
calls to the $p$-th-order oracle.
\end{theorem}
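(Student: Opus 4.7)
The plan has three ingredients: (i) verify that the preprocessing step produces a lower bound $D_0 \leq \operatorname{dist}(x_0, X^*)$; (ii) identify the first index $t^*$ at which $D_{t^*} \geq \operatorname{dist}(x_0, X^*)$ and invoke \Cref{thm:p-order-gradient-optimal-h-pf} to conclude termination by $t^*$; and (iii) show the cumulative oracle cost across $t = 1,\ldots,t^*$ is a geometric series dominated by the cost of the last (successful) epoch, which matches \eqref{eqn:perfect-D}.

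For (i), I would invoke \eqref{eq:6.14-assumption-h-pf} with $s=1$ and $N_1=1$, evaluated at $k=2N_1=2$. Combining this with the monotonicity $\|x_0-x_1^*\|\leq \|x_0-x^*\|$ from \eqref{eqn:consecutive-sol-dis-pf} (valid for every $x^*\in X^*$) yields $\|\nabla f_1(x_1^2)\|\leq L_{1,2}\|x_0-x^*\|^{p+\nu-1}$; minimizing over $x^*\in X^*$ and using the definition $D_0^{p+\nu-1}=\|\nabla f_1(x_1^2)\|/L_{1,2}$ gives $D_0\leq \operatorname{dist}(x_0,X^*)$. For (ii), since $D_t=4^t D_0$, the smallest index with $D_t\geq \operatorname{dist}(x_0,X^*)$ satisfies $t^*\leq \lceil\log_4(\operatorname{dist}(x_0,X^*)/D_0)\rceil$. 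At this epoch, \Cref{alg:main-composite-pf} is invoked with $\sigma_1 = \varepsilon/[3(9D_{t^*})^{p+\nu-1}]$, and \eqref{eqn:gradient-sigma} in \Cref{thm:p-order-gradient-optimal-h-pf} yields
\[
\|\nabla f(\widehat{x})\|\leq 3\sigma_1[9\operatorname{dist}(x_0,X^*)]^{p+\nu-1} \leq 3\sigma_1(9D_{t^*})^{p+\nu-1} = \varepsilon,
\]
so the outer loop terminates no later than $t^*$.

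For (iii), by \Cref{thm:p-order-gradient-optimal-h-pf} the oracle cost at epoch $t$ is $\mathcal{O}\bigl(\bigl[\max\{pL_{p,\nu}(f),L_0,\theta\}\,(9D_t)^{p+\nu-1}/\varepsilon\bigr]^{1/(p+\nu)}\bigr)$. Substituting $D_t=4^tD_0$, this grows as $\bigl(4^{(p+\nu-1)/(p+\nu)}\bigr)^t$ in $t$, so $\sum_{t=1}^{t^*}$ is a geometric series whose ratio is strictly greater than $1$ (since $p+\nu\geq 2$). Hence the sum is dominated by its last term up to an absolute constant, and using $D_{t^*}\leq 4\operatorname{dist}(x_0,X^*)$ (which follows from $D_{t^*-1}<\operatorname{dist}(x_0,X^*)$ when $t^*\geq 2$, and trivially when $t^*=1$) gives the bound \eqref{eqn:perfect-D}. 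The $\mathcal{O}(1)$ preprocessing cost to compute $D_0$ is absorbed.

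The main obstacle is the geometric-series collapse in (iii): its success hinges on the growth exponent $4^{(p+\nu-1)/(p+\nu)}$ being strictly greater than $1$, which is guaranteed by the standing assumption $p+\nu\geq 2$, and on the overshoot factor $D_{t^*}/\operatorname{dist}(x_0,X^*)$ being uniformly bounded (by $4$) so that the last term of the series does not carry an unbounded penalty. Steps (i) and (ii) are comparatively clean: (i) is a direct consequence of the slow-gradient guarantee \eqref{eq:6.14-assumption-h-pf} together with \eqref{eqn:consecutive-sol-dis-pf}, and (ii) is essentially a restatement of \Cref{thm:p-order-gradient-optimal-h-pf} under the chosen $\sigma_1$.
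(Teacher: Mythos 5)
Your proposal is correct and follows essentially the same route as the paper: bound the preprocessing estimate $D_0\le\operatorname{dist}(x_0,X^*)$ via \eqref{eq:6.14-assumption-h-pf} and \eqref{eqn:consecutive-sol-dis-pf}, show termination no later than the first $t$ with $D_t\ge\operatorname{dist}(x_0,X^*)$ using \eqref{eqn:gradient-sigma}, and collapse the geometric series $\sum_t 4^{t(p+\nu-1)/(p+\nu)}$ (ratio $\ge 2$ since $p+\nu\ge 2$) into its last term bounded by $D_{t^*}\le 4\operatorname{dist}(x_0,X^*)$. The paper carries out the same geometric sum explicitly; your version simply states the "last term dominates" conclusion, which is equivalent.
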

\begin{proof}
Observe that after at most $T$ calls to \Cref{alg:main-composite-pf}, \Cref{alg:main-composite-pf-gc-D} 
will terminate, where $T$ satisfies
$D_T=4^{T}D_0\geq \operatorname{dist}(x_0, X^*), $ and $D_{T-1}=4^{T-1}D_0\leq \operatorname{dist}(x_0, X^*),$ therefore, 
we have $T-1\leq \lceil  \log_4{\operatorname{dist}(x_0, X^*)}/{D_0}\rceil.$
By \Cref{thm:p-order-gradient-optimal-h-pf},
in total, \Cref{alg:main-composite-pf-gc-D} requires
   \begin{align*}
&c\textstyle\sum_{t=1}^{T}\left[\tfrac{c_{\mathcal{A}} \max\{pL_{p,\nu}(f), L_0, \theta \}(p+\nu)[3(9D_0\cdot 4^t)^{p+\nu-1}]}{4{\varepsilon}}\right]^{\frac{1}{p+\nu}}\notag\\
&\leq c\left[\tfrac{c_{\mathcal{A}} \max\{pL_{p,\nu}(f), L_0, \theta \}(p+\nu)[3(9D_0)^{p+\nu-1}]}{4{\varepsilon}}\right]^{\frac{1}{p+\nu}}\tfrac{4^{{T(p+\nu-1)}/{(p+\nu)}}\cdot4^{{(p+\nu-1)}/{(p+\nu)}}}{4^{{(p+\nu-1)}/{(p+\nu)}}-1}\notag\\
&\leq c\left[\tfrac{c_{\mathcal{A}} \max\{pL_{p,\nu}(f), L_0, \theta \}(p+\nu)[3(9\operatorname{dist}(x_0, X^*))^{p+\nu-1}]}{4{\varepsilon}}\right]^{\frac{1}{p+\nu}}\tfrac{16^{{(p+\nu-1)}/{(p+\nu)}}}{4^{{(p+\nu-1)}/{(p+\nu)}}-1}
\end{align*}
calls to the $p$-th-order oracle, where $c$ is a universal constant.
\end{proof}
\section{Gradient Minimization Framework for Uniformly Convex Problems}\label{sec:Global high-order non-degeneracy}

In this section, leveraging the (sub)gradient norm convergence results of the high-order AR algorithms established in \Cref{sec:Optimal high-order methods for structured convex problems} and \Cref{sec:parameter-free-convex} for convex composite problems, we obtain fast (sub)gradient-norm rates for composite problems that satisfy a suitable regularity condition.
\subsection{High-order AR for uniformly convex composite problems}
Consider a class of composite problems \eqref{eqn:composite-problem},
and in addition, $f$ is uniformly convex 
on $\mathbb{R}^n$ of degree $q$ with 
parameter $\sigma_{q}(f)>0$
\cite{polyak1966existence, aze1995uniformly,vladimirov1978uniformly,zǎlinescu1983uniformly}, i.e.,
\begin{align}\label{eqn:f-uniform-convexity}
    f(x)\geq f(y)+\langle g_y, x-y\rangle+\tfrac{\sigma_{q}(f)}{q}\|x-y\|^q,\quad \forall\, x, y\in\mathbb{R}^n, \quad \forall\,g_y\in\partial f(y),
\end{align}
where $q\geq 2.$ Here, we adopt the characterization of uniform convexity via the subgradient inequality; see for example \cite[Proposition~17.26]{bauschke2020correction}.
\begin{algorithm}[!ht]
\caption{High-order AR for uniformly convex functions }\label{alg:main-uniformconvex}
\begin{algorithmic}[1]
 \Require 
 Initial point $x_0\in\mathbb{R}^n$ and initial subgradient $\nu_0\in\partial f(x_0);$
 $L_{p+1}(g); \sigma_{q}(f).$
  \For{$k=1, 2\dots, $} 
    \State 
    Compute an approximate solution $(x_{k},\nu_k)$ of $f$ with initialization $(x_{k-1},\nu_{k-1})$
    by running high-order AR framework \Cref{alg:main-composite} for $m_k$ iterations, where
    \begin{align*}
        m_k=\max\left\{\left(\tfrac{qL_{p+1}(g)}{{\sigma_{q}(f)}}\right)^{\frac{2}{3p+1}}\left({\tfrac{q\|\nu_{k-1}\|}{2\sigma_{q}(f)}}\right)^{\frac{2(p-q+1)}{(3p+1)(q-1)}}, 1\right\}.
    \end{align*}
    \smallskip
    \State If $\|\nu_k\|\leq \varepsilon,$ then {\bf terminate} with $\widehat{x}=x_k.$
     \EndFor
 \State \Return{$\widehat{x}$}
\end{algorithmic}
\end{algorithm}
To solve this problem,
 we employ a restart scheme: restart the high-order AR framework \Cref{alg:main-composite}
whenever the (sub)gradient norm decreases by half. Specifically, we consider \Cref{alg:main-uniformconvex}, where 
for each epoch $k,$ we run $m_k$
iterations of the high-order AR framework \Cref{alg:main-composite} with the initialization 
$x_{k-1},$ i.e.,
the output of the previous epoch, $k-1.$
With a suitable selection of the epoch lengths \(m_k\), determined by the upper smoothness order \(p\) and the lower uniform convexity order \(q\), we obtain a sequence of subgradients with exponentially decaying norms across epochs.
We next present the convergence guarantee for \Cref{alg:main-uniformconvex}.
\begin{proposition}\label{eqn:prop:uniform-convexity}
 Suppose that \(\|\nu_0\|\ge \varepsilon\).
Then \Cref{alg:main-uniformconvex} computes a point \(\widehat{x}\) such that there exists
\(\widehat{\nu}\in\partial f(\widehat{x})\) with \(\|\widehat{\nu}\|\le \varepsilon\)
within at most
\begin{align*}
\mathcal{O}\left(\left[\tfrac{(p+1)L_{p+1}(g)}{{\sigma_{q}(f)}}\right]^{\frac{2}{3p+1}}\log \tfrac{\|\nu_0\|}{\varepsilon}\right)
\end{align*}
calls to the $p$-th-order oracle
if $q=p+1;$ 
and  at most
\begin{align*}
  \mathcal{O}\left(  \left[\tfrac{qL_{p+1}(g)}{{\sigma_{q}(f)}}\right]^{\frac{2}{3p+1}}\left(\tfrac{q}{\sigma_{q}(f)}\right)^{{\frac{2(q-1-p)}{(3p+1)(q-1)}}}\left(\tfrac{1}{\varepsilon}\right)^{{\frac{2(q-1-p)}{(3p+1)(q-1)}}}\right)
\end{align*}
calls to the $p$-th-order oracle
if $q>p+1;$
and at most
\begin{align*}
   \mathcal{O}\left( \tfrac{1}{\log{\left(\frac{p}{q-1}\right)}}{\log\log\left[\tfrac{\sigma_q(f)}{{q\varepsilon}}{\left(\tfrac{\sigma_q(f)}{qL_{p+1}(g)}\right)^{\frac{q-1}{p+1-q}}}\right]}+\left(\tfrac{qL_{p+1}(g)}{{\sigma_{q}(f)}}\right)^{\frac{2}{3p+1}}\left(\tfrac{q\|\nu_0\|}{\sigma_{q}(f)}\right)^{{\frac{2(p-q+1)}{(3p+1)(q-1)}}}\right)
\end{align*}
calls to the $p$-th-order oracle
if $q<p+1.$ 
\end{proposition}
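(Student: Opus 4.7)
The plan is to combine the per-epoch subgradient-norm bound supplied by \Cref{thm:p-order-gradient-optimal} with a strong-convexity-like distance estimate coming from \eqref{eqn:f-uniform-convexity}, and then telescope the iteration counts $m_k$ across the restart scheme of \Cref{alg:main-uniformconvex}. Applying \eqref{eqn:f-uniform-convexity} at $y=x_{k-1}$ with $x=x^\ast\in X^\ast$, using $f(x^\ast)\le f(x_{k-1})$, and invoking Cauchy--Schwarz yields $\operatorname{dist}(x_{k-1},X^\ast)\le(q\|\nu_{k-1}\|/\sigma_q(f))^{1/(q-1)}$, so the unknown distance $D$ appearing in \Cref{thm:p-order-gradient-optimal} can be replaced by an expression depending only on $\|\nu_{k-1}\|$.

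Inverting \Cref{thm:p-order-gradient-optimal}, after $m_k$ oracle calls inside \Cref{alg:main-composite} one has
\begin{equation*}
\|\nu_k\|\;\le\; \tfrac{c\,L_{p+1}(g)\operatorname{dist}(x_{k-1},X^\ast)^p}{m_k^{(3p+1)/2}}\;\le\;\tfrac{c\,L_{p+1}(g)}{m_k^{(3p+1)/2}}\Bigl(\tfrac{q\|\nu_{k-1}\|}{\sigma_q(f)}\Bigr)^{\!p/(q-1)}.
\end{equation*}
Imposing $\|\nu_k\|\le\|\nu_{k-1}\|/2$ reduces, after elementary algebra, to exactly the $m_k$ prescribed in \Cref{alg:main-uniformconvex} up to universal constants; the exponent $2(p-q+1)/((3p+1)(q-1))$ on $\|\nu_{k-1}\|$ is precisely what the algebra produces. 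Hence each epoch halves the subgradient norm, and reaching $\|\widehat\nu\|\le\varepsilon$ requires at most $K=\lceil\log_2(\|\nu_0\|/\varepsilon)\rceil$ epochs and $\sum_{k=1}^K m_k$ oracle calls in total.

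The three cases follow from the sign of $p-q+1$. When $q=p+1$ the $\|\nu_{k-1}\|$-exponent vanishes, $m_k$ is constant, and the total is $K\cdot m_1$, which gives the linear rate. When $q>p+1$ the exponent is negative, so $m_k$ roughly doubles each epoch and $\sum m_k$ is a geometric series dominated by its last term $m_K$ evaluated at $\|\nu_{K-1}\|\asymp\varepsilon$; this produces the sublinear $\varepsilon^{-2(q-p-1)/((3p+1)(q-1))}$ dependence. When $q<p+1$ the sequence $m_k$ decays geometrically during the linear phase, so $\sum m_k=\mathcal{O}(m_1)$, matching the second summand of the bound. The linear phase ends when the formula for $m_k$ first falls below $1$, at $\|\nu_{\mathrm{trans}}\|\asymp(\sigma_q(f)/q)(\sigma_q(f)/(qL_{p+1}(g)))^{(q-1)/(p+1-q)}$. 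Thereafter $m_k\equiv 1$, and inverting \Cref{thm:p-order-gradient-optimal} at $N=1$ combined with the uniform-convexity estimate yields $\|\nu_k\|\le C\|\nu_{k-1}\|^{p/(q-1)}$, a genuine superlinear contraction with exponent $\alpha:=p/(q-1)>1$. Renormalizing via $y_k:=\log(1/\|\nu_k\|)-\log C/(\alpha-1)$ turns the recursion into $y_k\ge\alpha y_{k-1}$, so $\mathcal{O}(\log\log(\|\nu_{\mathrm{trans}}\|/\varepsilon)/\log(p/(q-1)))$ single-oracle epochs suffice, producing the $\log\log$ summand.

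The main obstacle is the $q<p+1$ case, specifically the handoff between the linear and superlinear phases. Two items require care: (i) verifying that the per-iteration contraction $\|\nu_k\|\le C\|\nu_{k-1}\|^{p/(q-1)}$ persists throughout the superlinear phase, which follows because $C\|\nu_{k-1}\|^{p/(q-1)-1}$ is decreasing in $\|\nu_{k-1}\|$ and is at most $1/2$ at $\|\nu_{\mathrm{trans}}\|$; and (ii) tracking the constants emerging from \Cref{thm:p-order-gradient-optimal} and the renormalization in $y_k$ so that the argument of $\log\log$ takes precisely the claimed form $(\sigma_q(f)/(q\varepsilon))(\sigma_q(f)/(qL_{p+1}(g)))^{(q-1)/(p+1-q)}$. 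Once these are pinned down, the three subtotals combine into the stated complexity.
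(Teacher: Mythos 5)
Your proposal follows essentially the same route as the paper: you derive the same distance bound from uniform convexity (the paper adds two copies of \eqref{eqn:f-uniform-convexity} to obtain \eqref{eqn:lower-uniform}, which just improves the constant by a factor of $2$, while you apply the inequality once with Cauchy--Schwarz — equivalent up to constants), you invoke \Cref{thm:p-order-gradient-optimal} per epoch, impose the halving condition to recover the prescribed $m_k$, and split into the three cases by the sign of $p-q+1$. Your Phase II renormalization $y_k=\log(1/\|\nu_k\|)-\log C/(\alpha-1)$ with $y_k\ge\alpha y_{k-1}$ is the same device the paper packages via the explicit constant $H$ and the bound $\|\nu_k\|\le H2^{-(p/(q-1))^{k-K_1}}$; both yield the $\log\log$ term with the same argument inside.
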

\begin{proof}
By adding two copies of the inequality \eqref{eqn:f-uniform-convexity}, we have
\begin{align}\label{eqn:lower-uniform}
    \langle x-y, g_x-g_y\rangle\geq \tfrac{2\sigma_{q}(f)}{q}\|x-y\|^{q}, \quad \forall\,\, x, y\in\mathbb{R}^n,\quad\forall\,g_x\in\partial f(x), \,g_y\in\partial f(y).
\end{align}
Substituting $x=x_{k-1}$ and $y=x^*$ into \eqref{eqn:lower-uniform}, and 
using the Cauchy-Schwarz inequality and the optimality condition $0\in\partial f(x^*),$ we can choose $g_{x^*}=0$ and thus,
we have
\begin{align}\label{eqn:initial-optimality-bound-q}
  \|x_{k-1}-x^*\|\leq  \left({\tfrac{q\|g_{k-1}\|}{2\sigma_{q}(f)}}\right)^{\frac{1}{q-1}},\quad \forall\,g_{k-1}\in\partial f(x_{k-1}).
\end{align}
Let us prove by induction that there exists a sequence $\{\nu_k\}_{k\geq 1}$ such that
$\nu_k\in\partial f(x_k)$ and
$\|\nu_k\|\leq \|\nu_0\|/2^{k}.$
Suppose it holds for $k-1.$
At epoch $k,$ we restart \Cref{alg:main-composite} with the initial point $x_{k-1}.$ 
By \Cref{thm:p-order-gradient-optimal}, 
 \Cref{alg:main-composite} can compute an approximate solution 
 $x_k$ together with $\nu_k\in\partial f(x_k)$
  such that
\begin{align*}
     \|\nu_k\|&\,\,\,\leq m_k^{-\frac{3p+1}{2}}{L_{p+1}(g)\|x_{k-1}-x^*\|^{p}}{{}}\overset{\eqref{eqn:initial-optimality-bound-q}}{\leq}m_k^{-\frac{3p+1}{2}}{L_{p+1}(g)}{{}}\left({\tfrac{q\|g_{k-1}\|}{2\sigma_{q}(f)}}\right)^{ {\frac{p-q+1}{q-1}}}{\tfrac{q\|g_{k-1}\|}{2\sigma_{q}(f)}}.
\end{align*}
Given that $g_{k-1}$ can be any subgradient of $f$ at the point $x_{k-1},$ we choose  $g_{k-1}=\nu_{k-1}.$
By the choice of $m_k$ in \Cref{alg:main-uniformconvex}, we have $\|\nu_k\|\leq {{\|\nu_{k-1}\|}/{2}}\leq \|\nu_{0}\|/2^{k}.$
We next bound the iteration complexity.

{\noindent \it Case I}: $q=p+1.$
In this case, $m_k\equiv m_0=\left[{qL_{p+1}(g)}/{{\sigma_{p+1}(f)}}\right]^{\frac{2}{3p+1}},$ so the convergence rate is linear.
We can obtain an approximate solution $\widehat{x}$ such that $\|\widehat{\nu}\|\leq \varepsilon$
within at most $\mathcal{O}(\log {\|\nu_0\|}/{\varepsilon})$ epochs.

{\noindent \it Case II}: $q>p+1.$ In this case,  $m_k$ increases exponentially, and to reach a point ${x}_K$ such that $\|\nu_K
\|\leq \varepsilon,$
we need
\begin{align*}
\textstyle\sum_{k=1}^{K}m_k&=\left(\tfrac{qL_{p+1}(g)}{{\sigma_{q}(f)}}\right)^{\frac{2}{3p+1}}\left(\tfrac{\sigma_{q}(f)}{q\|\nu_0\|}\right)^{\frac{2(q-p-1)}{(3p+1)(q-1)}}\textstyle\sum_{k=1}^{K}2^{\frac{2(q-p-1)(k-1)}{(3p+1)(q-1)}}\notag\\
&\overset{\text{(a)}}{\leq }\left(\tfrac{qL_{p+1}(g)}{{\sigma_{q}(f)}}\right)^{\frac{2}{3p+1}}\left(\tfrac{2\sigma_{q}(f)}{q}\right)^{{\frac{2(q-p-1)}{(3p+1)(q-1)}}}\tfrac{1}{2^{\frac{2(q-p-1)}{(3p+1)(q-1)}}-1}\left(\tfrac{1}{\varepsilon}\right)^{{\frac{2(q-p-1)}{(3p+1)(q-1)}}}
\end{align*}
calls to the $p$-th-order oracle, where in (a), we used $\varepsilon<\|\nu_{K-1}\|\leq \|\nu_0\|/2^{K-1}.$ 

{\noindent \it Case III:} $q < p + 1.$ In this case, $m_k$ decays exponentially until it reaches $1,$ then, the algorithm converges superlinearly. Thus, the algorithm has two phases. Let $k = K_1$ denote the final iteration count of Phase~I, where $m_{K_1} = 1$ and $m_{K_1-1} > 1,$ we have
\begin{align}\label{eqn:m_K1=1}
m_{K_1}=\max\left\{\left(\tfrac{qL_{p+1}(g)}{{\sigma_{q}(f)}}\right)^{\frac{2}{3p+1}}\left(\tfrac{q\|\nu_{K_1-1}\|}{2\sigma_{q}(f)}\right)^{{\frac{2(p-q+1)}{(3p+1)(q-1)}}}, 1\right\}=1.
\end{align}
Then, at epoch $K_1,$ by \Cref{thm:p-order-gradient-optimal}, we have
\begin{align}\label{eqn:h-bound}
    \|\nu_{K_1}\|&\,\,\,\,\leq
    {L_{p+1}(g)\|x_{K_1-1}-x^*\|^{p}}\notag\\
    &\,\overset{\eqref{eqn:initial-optimality-bound-q}}{\leq}{L_{p+1}(g)\left({\tfrac{q\|\nu_{K_1-1}\|}{2\sigma_{q}(f)}}\right)^{\frac{p}{q-1}}}\overset{\eqref{eqn:m_K1=1}}{\leq}  \tfrac{1}{2}\left(\tfrac{1}{L_{p+1}(g)}\right)^{{\frac{q-1}{p+1-q}}}\left(\tfrac{{\sigma_{q}(f)}}{q}\right)^{{\frac{p}{p+1-q}}}.
\end{align}
The total number of iterations in Phase I is
\begin{align*}
\textstyle\sum_{k=1}^{K_1}m_k&\leq\textstyle\sum_{k=1}^{K_1-1}\left(\tfrac{qL_{p+1}(g)}{{\sigma_{q}(f)}}\right)^{\frac{2}{3p+1}}\left(\tfrac{q\|\nu_0\|}{2^{k-1}\sigma_{q}(f)}\right)^{{\frac{2(p-q+1)}{(3p+1)(q-1)}}}+1\notag\\
&\leq c\left(\tfrac{qL_{p+1}(g)}{{\sigma_{q}(f)}}\right)^{\frac{2}{3p+1}}\left(\tfrac{q\|\nu_0\|}{\sigma_{q}(f)}\right)^{{\frac{2(p-q+1)}{(3p+1)(q-1)}}}.
\end{align*}
Next, \Cref{alg:main-uniformconvex} enters  Phase II: the superlinear convergence phase. For all $k\geq K_1,$ $m_k=1,$ furthermore,
by \Cref{thm:p-order-gradient-optimal} and \eqref{eqn:initial-optimality-bound-q}, we have 
$$\|\nu_{k}\|\leq L_{p+1}(g)\left(\tfrac{q\|\nu_{k-1}\|}{2\sigma_{q}(f)}\right)^{\frac{p}{q-1}}.$$
Define $H\coloneqq{L_{p+1}(g)}^{\frac{q-1}{q-(p+1)}}\left(\tfrac{q}{2\sigma_q(f)}\right)^{\frac{p}{q-(p+1)}},$
thus, we have $$\tfrac{ \|\nu_k\|}{H}\leq \left(\tfrac{\|\nu_{k-1}\|}{H}\right)^{\frac{p}{q-1}}\quad\text{and}\quad \tfrac{\|\nu_{K_1}\|}{H}\overset{\eqref{eqn:h-bound}}{\leq} \tfrac{1}{2}. $$
Therefore, for all epochs $k\geq K_1,$
we have $ \|\nu_k\|\leq H2^{-(\frac{p}{q-1})^{k-K_1}}$ and $m_k=1.$  
For any $\varepsilon>0,$ to reach a point $\widehat{x}$ that satisfies $\|\widehat{\nu}\|\leq\varepsilon,$ we need 
 $$t\geq \tfrac{1}{\log{\left(\frac{p}{q-1}\right)}}\left({\log\log\tfrac{H}{\varepsilon}- \log \log2}\right){}+\textstyle\sum_{k=1}^{K_1}m_k$$ iterations. This concludes the proof.
\end{proof}
Depending on the relation between $p+1$ and $q,$ \Cref{alg:main-uniformconvex} can exhibit linear, sublinear or superlinear convergence rates.
Such complexities in terms of the gradient norm have not been derived before.   
\subsection{Parameter-free gradient minimization for uniformly convex functions}
In this section, we develop a parameter-free, inexact method for uniformly convex problems that does not assume knowledge of the Lipschitz constant or the uniform convexity parameter. We build on \Cref{sec:parameter-free-convex} and use a guess-and-check scheme to estimate the uniform convexity parameter.

For simplicity, we will focus on the unconstrained problem \eqref{eqn:main} where $f$ is $p$-times differentiable with a Lipschitz continuous $p$-th derivative \eqref{eqn:smoothness-p}, and assume that $f$  is uniformly convex with order $q=p+1.$ Below we present Algorithm~\ref{alg:main-uniformconvex-pf}, a parameter-free and inexact algorithm tailored to uniformly convex functions, together with its convergence guarantees.
\begin{algorithm}[!ht]
\caption{{Parameter-free uniformly convex accumulative regularization} }\label{alg:main-uniformconvex-pf}
\begin{algorithmic}[1]
 \Require 
 Initial point $x_0;$ initial uniform convexity guess $  \sigma_{p+1, 0};$ initial line-search value $ L_0;$ target accuracy $\varepsilon.$
  \For{$t=1, 2\dots, $} 
    \State 
    Set $(x_t, L_t)\leftarrow\text{\Cref{alg:main-composite-pf}}(f, x_{t-1}, {\sigma_{p+1,t-1}}/{[3(p+1)9^p]}, L_{t-1}).$
   \State If $\|\nabla f(x_t)\|>{\|\nabla f(x_{t-1})\|}/{2},$ then $\sigma_{p+1, t}={\sigma_{p+1, t-1}}/{4}.$
    \smallskip
    \State If $\|\nabla f(x_t)\|\leq \varepsilon,$ then {\bf terminate} with $\widehat{x}=x_t.$
     \EndFor
 \State \Return{$\widehat{x}.$}
\end{algorithmic}
\end{algorithm}

\begin{theorem}\label{thm:mu-pf}
   Suppose the assumptions in \Cref{thm:p-order-gradient-optimal-h-pf} hold and that $f$ is uniformly convex of order $p+1$ with parameter $\sigma_{p+1}(f)$.
 Then \Cref{alg:main-uniformconvex-pf} computes a solution $\widehat{x}$ such that $\|\nabla f(\widehat{x})\|\leq \varepsilon$
    within at most 
    \begin{align*}
       \mathcal{O}\left(\left(\tfrac{\max\{{pL_{p+1}}(f), L_0, \theta \}}{\sigma_{p+1}(f)}\right)^{\frac{1}{p+1}}\left\{\left\lceil \log_4\tfrac{\sigma_{p+1,0}}{\sigma_{p+1}(f)}\right\rceil+\left\lceil \log_2\tfrac{\|\nabla f(x_0)\|}{\varepsilon}\right\rceil\right\}\right)
    \end{align*}
calls to the $p$-th-order oracle, where $\theta$ is a user-defined inexactness parameter in \Cref{as:stronger-ANPE-tensor-pf}.
\end{theorem}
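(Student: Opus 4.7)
The plan is to combine Theorem~\ref{thm:p-order-gradient-optimal-h-pf} applied at each outer iteration of \Cref{alg:main-uniformconvex-pf} with the uniform convexity of $f$ to obtain a per-step contraction of the gradient norm, and then count how many outer iterations are needed. First I would invoke Theorem~\ref{thm:p-order-gradient-optimal-h-pf} with $\nu = 1$ and $\sigma_1 = \sigma_{p+1,t-1}/[3(p+1)9^p]$, which tells us that the output $x_t$ of the inner call to \Cref{alg:main-composite-pf} satisfies
\begin{equation*}
\|\nabla f(x_t)\| \;\leq\; 3\sigma_1\bigl[9\operatorname{dist}(x_{t-1},X^*)\bigr]^p \;=\; \tfrac{\sigma_{p+1,t-1}}{p+1}\operatorname{dist}(x_{t-1},X^*)^p.
\end{equation*}
The second ingredient is uniform convexity: applying \eqref{eqn:lower-uniform} with $y = x^*$ together with the Cauchy--Schwarz inequality yields $\operatorname{dist}(x_{t-1},X^*)^p \leq (p+1)\|\nabla f(x_{t-1})\|/(2\sigma_{p+1}(f))$. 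Multiplying these two bounds would give the cornerstone per-step contraction
\begin{equation*}
\|\nabla f(x_t)\| \;\leq\; \tfrac{\sigma_{p+1,t-1}}{2\sigma_{p+1}(f)}\,\|\nabla f(x_{t-1})\|.
\end{equation*}

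Next I would use this contraction to count outer iterations. Whenever $\sigma_{p+1,t-1}\leq \sigma_{p+1}(f)$ the contraction factor is at most $1/2$, so the iteration is automatically ``good'' in the language of \Cref{alg:main-uniformconvex-pf} and the gradient norm halves. Every ``bad'' iteration, in which $\sigma$ is divided by $4$, must therefore occur while $\sigma_{p+1,t-1}>\sigma_{p+1}(f)$, giving $T_{\mathrm{bad}}\leq \lceil \log_4(\sigma_{p+1,0}/\sigma_{p+1}(f))\rceil$. Once $\sigma_{p+1,t}$ has dropped into the interval $(\sigma_{p+1}(f)/4,\sigma_{p+1}(f)]$, all subsequent iterations are good, so at most $T_{\mathrm{good}}\leq \lceil \log_2(\|\nabla f(x_0)\|/\varepsilon)\rceil$ additional good iterations suffice to bring the gradient norm below $\varepsilon$.

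I would then close the accounting with the per-call cost of \Cref{alg:main-composite-pf}. By Theorem~\ref{thm:p-order-gradient-optimal-h-pf}, each call uses $\mathcal{O}\bigl((\max\{pL_{p+1}(f),L_0,\theta\}/\sigma_1)^{1/(p+1)}\bigr)$ oracle queries, and since the monotone non-increasing sequence $\sigma_{p+1,t-1}$ never drops below $\sigma_{p+1}(f)/4$ (no further division occurs once $\sigma$ dips below $\sigma_{p+1}(f)$), every outer iteration costs at most $\mathcal{O}\bigl((\max\{pL_{p+1}(f),L_0,\theta\}/\sigma_{p+1}(f))^{1/(p+1)}\bigr)$ oracle calls. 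Multiplying this uniform per-iteration cost by $T_{\mathrm{bad}}+T_{\mathrm{good}}$ then reproduces the stated complexity.

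The main hurdle I foresee is managing $\|\nabla f(x_t)\|$ during the bad phase: the contraction factor $\sigma_{p+1,t-1}/(2\sigma_{p+1}(f))$ can exceed $1$, so the gradient norm need not decrease monotonically while $\sigma$ is still being driven down. I would resolve this by observing that each bad iteration strictly divides $\sigma_{p+1,t}$ by four, so the cumulative worst-case amplification over the $\log_4$ bad iterations is controlled against the geometric contraction in the subsequent good phase, and a careful telescoping (or, equivalently, tracking the best-so-far gradient norm along the outer iterates) absorbs any transient increase into the $\log_2(\|\nabla f(x_0)\|/\varepsilon)$ term. Finally, I would verify that the precondition $\sigma_1\leq c_{\mathcal{A}}$ required by Theorem~\ref{thm:p-order-gradient-optimal-h-pf} is preserved at every outer step, which follows from the monotone non-increase of $\sigma_{p+1,t}$ provided $\sigma_{p+1,0}$ is initialized below $3(p+1)9^p c_{\mathcal{A}}$.
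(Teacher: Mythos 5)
Your proposal follows essentially the same route as the paper's proof: the per-epoch contraction $\|\nabla f(x_t)\| \leq \tfrac{\sigma_{p+1,t-1}}{2\sigma_{p+1}(f)}\|\nabla f(x_{t-1})\|$ obtained by combining \Cref{thm:p-order-gradient-optimal-h-pf} with uniform convexity, the partition into ``good'' epochs (gradient halves, $\sigma$ unchanged) and ``bad'' epochs ($\sigma$ divided by four), the observation that $\sigma_{p+1,t}$ stays above $\sigma_{p+1}(f)/4$ since a bad epoch is only possible when $\sigma_{p+1,t-1} > \sigma_{p+1}(f)$, and the final multiplication of the per-epoch cost from \Cref{thm:p-order-gradient-optimal-h-pf} by the epoch count $\lceil\log_4(\sigma_{p+1,0}/\sigma_{p+1}(f))\rceil + \lceil\log_2(\|\nabla f(x_0)\|/\varepsilon)\rceil$. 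This matches the paper's decomposition into the index sets $\mathcal{T}_1$ and $\mathcal{T}_2$.

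Where you go beyond the paper is in explicitly flagging the non-monotonicity of $\|\nabla f(x_t)\|$ during the bad phase; the paper simply asserts $|\mathcal{T}_2| \leq \lceil\log_2\|\nabla f(x_0)\|/\varepsilon\rceil$ without addressing this. Your concern is real, but your ``careful telescoping'' does not actually resolve it within the stated bound: the worst-case amplification at the $i$-th bad epoch is $\sigma_{p+1,0}/(2\cdot 4^{i-1}\sigma_{p+1}(f))$, so over $K = \lceil\log_4(\sigma_{p+1,0}/\sigma_{p+1}(f))\rceil$ bad epochs the cumulative amplification is $\prod_{i=1}^K \sigma_{p+1,0}/(2\cdot 4^{i-1}\sigma_{p+1}(f))$, which is on the order of $2^{\Theta(K^2)}$, not $2^{\mathcal{O}(K)}$; telescoping against the good phase would therefore inflate the $\log_2(\|\nabla f(x_0)\|/\varepsilon)$ term by an additive $\Theta(K^2)$, not $\Theta(K)$. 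The fix you gesture at with ``best-so-far tracking'' is the right one, but to make it rigorous one must actually revert to $x_{t-1}$ (discard the new iterate) whenever the epoch is bad, so that $\|\nabla f(x_t)\|$ is monotone non-increasing and each good epoch halves it relative to the best iterate seen so far; this reproduces the claimed epoch count exactly but requires a small modification to \Cref{alg:main-uniformconvex-pf} as written. You should make this modification explicit rather than leaving it as an informal telescoping argument.
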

\begin{proof}
    {\bf i)}  By 
\Cref{thm:p-order-gradient-optimal-h-pf} with $\nu=1$ and uniform convexity with $q=p+1,$ for each epoch $t,$
    we have \begin{align}\label{eqn:intermedia-uniformly-convex}
        \|\nabla f(x_t)\|&\,\,\leq \tfrac{\sigma_{p+1, t-1}}{p+1}\|x_{t-1}-x^*\|^{p}
\overset{\eqref{eqn:initial-optimality-bound-q}}{\leq}\tfrac{\sigma_{p+1, t-1}\|\nabla f(x_{t-1})\|}{2\sigma_{p+1}(f)}
    \end{align}
    after $\mathcal{O}([{\max\{{pL_{p+1}}(f), L_0, \theta \}}/{\sigma_{p+1, t-1}}]^{\frac{1}{p+1}})$
{calls to the $p$-th-order oracle.} 

{\bf ii)} At epoch $t,$ if $\|\nabla f(x_{t})\|>{\|\nabla f(x_{t-1})\|}/{2},$ 
 then by \eqref{eqn:intermedia-uniformly-convex}, we conclude the current $\sigma_{p+1, t-1}$ overestimates the true uniform convexity parameter $\sigma_{p+1}(f).$
Hence, we set
$\sigma_{p+1,t}={{\sigma_{p+1,t-1}}}/{{4}}.$
However,  the estimate $\sigma_{p+1,t}$ will no longer change after at most $\lceil \log_4\tfrac{\sigma_{p+1,0}}{\sigma_{p+1}(f)}\rceil$ epochs. Denote the set of such epoch indices by $\mathcal{T}_1.$ On the other hand, at epoch $t,$ if $\|\nabla f(x_t)\|\leq {\|\nabla f(x_{t-1})\|}/{2},$
 then by \eqref{eqn:intermedia-uniformly-convex}, we conclude the current $\sigma_{p+1, t-1}$ is small enough, and thus in the next epoch $t,$ we have $\sigma_{p+1, t}=\sigma_{p+1, t-1}.$  Furthermore, 
 $\|\nabla f(x_t)\|\leq\varepsilon$
 after at most $\lceil \log_2\tfrac{\|\nabla f(x_0)\|}{\varepsilon}\rceil$ epochs. Denote the set of such epoch indices by $\mathcal{T}_2.$  
Therefore, \Cref{alg:main-uniformconvex-pf} terminates in at most $$\lceil \log_4\tfrac{\sigma_{p+1,0}}{\sigma_{p+1}(f)}\rceil+\lceil \log_2\tfrac{\|\nabla f(x_0)\|}{\varepsilon}\rceil$$ epochs of the parameter-free AR method \Cref{alg:main-composite-pf}. Denote $\mathcal{T}\coloneqq \mathcal{T}_1\cup \mathcal{T}_2.$ Combining {\bf i), ii)}, we conclude the total number of $p$-th-order oracle calls is
 \begin{align*}
     &\textstyle\sum_{t\in \mathcal{T}}c_p\left[\tfrac{\max\{{pL_{p+1}}(f), L_0, \theta \}}{\sigma_{p+1, t-1}}\right]^{\frac{1}{p+1}}=c_p\max\{{pL_{p+1}}(f), L_0, \theta \}^{\frac{1}{p+1}}\textstyle\sum_{t\in \mathcal{T}}\left(\tfrac{1}{\sigma_{p+1, t-1}}\right)^{\frac{1}{p+1}}\notag\\
     &\overset{\text{(i)}}{\leq} c_p\left(\tfrac{\max\{{pL_{p+1}}(f), L_0, \theta \}}{\sigma_{p+1}(f)}\right)^{\frac{1}{p+1}} |\mathcal{T}_1|+c_p\left(\tfrac{\max\{{pL_{p+1}}(f), L_0, \theta \}}{\sigma_{p+1}(f)/4}\right)^{\frac{1}{p+1}} |\mathcal{T}_2|,
      % &\leq c_p\left(\tfrac{\max\{{pL_{p+1}}(f), L_0, \theta \}}{\sigma_{p+1, 0}}\right)^{\frac{1}{p+1}}\tfrac{4^{\frac{T_1}{p+1}}-1}{4^{\frac{1}{p+1}}-1}+\tfrac{c_p\max\{{pL_{p+1}}(f), L_0, \theta \}^{\frac{1}{p+1}}}{\sigma_{p+1}(f)^{\frac{1}{p+1}}}\lceil \log_2\tfrac{\|\nabla f(x_0)\|}{\varepsilon}\rceil\notag\\
      % &\overset{\text{(ii)}}{\leq}c_p\left(\tfrac{\max\{{pL_{p+1}}(f), L_0, \theta \}}{\sigma_{p+1}(f)}\right)^{\frac{1}{p+1}}\tfrac{4^{\frac{|\mathcal{T}|}{p+1}}}{4^{\frac{1}{p+1}}-1}+c_p\left(\tfrac{\max\{{pL_{p+1}}(f), L_0, \theta \}}{\sigma_{p+1}(f)}\right)^{\frac{1}{p+1}}\lceil \log_2\tfrac{\|\nabla f(x_0)\|}{\varepsilon}\rceil,
 \end{align*}
 where $c_p=3(p+1)9^p,$
and in (i), we used the fact $\sigma_{p+1, t}>\sigma_{p+1}(f),$ for all $t\in \mathcal{T}_1,$ and 
$\sigma_{p+1, t-1}=\sigma_{p+1, t}$ for all $t\in\mathcal{T}_2.$
Suppose $t_0$ is the smallest index such that $\sigma_{p+1, t_0}=\sigma_{p+1, t},$ then by \eqref{eqn:intermedia-uniformly-convex}
we conclude that $\sigma_{p+1}(f)\leq\sigma_{p+1, t_0-1},$ and $\sigma_{p+1, t_0-1}=4\sigma_{p+1, t_0}.$ Therefore, for all $t\in \mathcal{T}_2,$ we have $\sigma_{p+1, t}\geq \sigma_{p+1}(f)/4.$
% and $\sigma_{p+1, T_1}=\sigma_{p+1, T_1-1}/4=\sigma_{p+1, 0}/{4^{T_1}}\geq\sigma_{p+1}(f)/4;$
Substituting the definitions of $\mathcal{T}_1$ and $\mathcal{T}_2$ concludes the proof.
\end{proof}
Observe that the rate matches the case where $L_{p+1}(f), \sigma_{p+1}(f)$ are known. Similar guess-and-check procedures for unknown $\sigma_{p+1}(f)$
can be extended to the case $p+1\neq q,$
and are thus omitted for brevity.
\section{Concluding Remarks}
In this paper, we establish a unified high-order accumulative regularization framework  for gradient-norm minimization on convex and uniformly convex objectives. By carefully designing regularization strategies and leveraging high-order information, the proposed algorithms bridge the long-standing gap between function-residual and gradient norm complexity guarantees. Our results demonstrate that the high-order framework effectively accelerates the existing slow convergence of the gradient norm by exploiting the fast function-residual convergences of the subroutine. Furthermore, we develop several parameter-free variants that achieve the same convergence rates without requiring prior knowledge of problem parameters. Overall, our framework attains the best-known gradient norm rates across a broad range of smoothness and curvature regimes, recovering classical first-order results as special cases while extending to previously unexplored high-order settings.

\begin{appendix}

\section{Proof of Lemma \ref{lem:prop-cubic}}
\label{sec:Gradient Complexity of the  Subroutine}
\begin{proof}
We aim to maintain relations $ \mathcal{R}_k^1,  \mathcal{R}_k^2$ for all $k\geq 1.$ 
We prove by induction.
When $k=1,$
by the definition of $f_1(x)$ in \Cref{alg:inner-loop},  we have $f_1^*=f(x_1)+\sqrt{1/{(L_3(f)+M)}}\left\|\nabla f(x_1)\right\| ^{\frac{3}{2}}.$
By choosing $A_1=1,$
we conclude $\mathcal{R}_1^1$ holds. Furthermore,  we have
\begin{align*}    f_1(x)&\,=
    f(x_1)+\tfrac{1}{\sqrt{L_3(f)+M}}\left\|\nabla f(x_1)\right\| ^{\frac{3}{2}}+\tfrac{C\|x-x_0\|^3}{6}\notag\\
    &\overset{\text{(a)}}{\leq} \min\limits_{y\in\mathbb{R}^n}\left[ f(y)+\tfrac{2L_3(f)\|y-x_0\|^3}{6}\right]+\tfrac{\left\|\nabla f(x_1)\right\| ^{\frac{3}{2}}}{\sqrt{L_3(f)+M}}+\tfrac{C\|x-x_0\|^3}{6}\notag\\
     &\,\leq f(x)+\tfrac{(2L_3(f)+C)\|x-x_0\|^3}{6}+\tfrac{\left\|\nabla f(x_1)\right\| ^{\frac{3}{2}}}{\sqrt{L_3(f)+M}},
\end{align*}
where in (a), we used \cite[Lemma 4]{nesterov2006cubic}, \cite[Lemma 6]{nesterov2008accelerating}.
 Thus,  $ \mathcal{R}_1^2$ holds. Assume that $\mathcal{R}_{k}^1$ and $\mathcal{R}_{k}^2$   hold for some $k\geq 1.$ For $k+1,$ we have
 \begin{align*}
      f_{k+1}(x)\,&\,=f_k(x)+a_k\left[ f(x_{k+1})+\langle \nabla f(x_{k+1}), x-x_{k+1}\rangle\right]\notag\\
      &\overset{\mathcal{R}_{k}^2}{\leq} A_k f(x)+\tfrac{(2L_3(f)+C)\|x-x_0\|^3}{6}+\tfrac{\left\|\nabla f(x_1)\right\| ^{\frac{3}{2}}}{\sqrt{L_3(f)+M}}\notag\\
      &\quad+a_k[f(x_{k+1})+\langle \nabla f(x_{k+1}), x-x_{k+1}\rangle]\notag\\
      &\overset{\text{(b)}}{\leq} A_{k+1} f(x)+\tfrac{(2L_3(f)+C)\|x-x_0\|^3}{6}+\tfrac{\left\|\nabla f(x_1)\right\| ^{\frac{3}{2}}}{\sqrt{L_3(f)+M}},
 \end{align*}
 where in (b), we used the convexity of $f$ and $A_{k+1}=A_k+a_k.$ Therefore $ \mathcal{R}_{k+1}^2$ holds. It remains to show $ \mathcal{R}_{k+1}^1$ holds, which
it is a straightforward modification of \cite{nesterov2008accelerating}, we include it here for completeness. 
By the definition of $f_{k+1}(x),$ we have
 \begin{align}\label{eqn:lower-psi-optimal-k+1}
f_{k+1}^*&\,\,{=}\min\limits_{x\in\mathbb{R}^n}\left\{ f_k(x)+a_k \left[ f(x_{k+1})+\langle \nabla f(x_{k+1}), x-x_{k+1}\rangle\right]\right\}.
 \end{align}
 We proceed with providing a lower bound on $f_k(x).$
By the definition of $f_k(x)$ in \Cref{alg:inner-loop}, we have $f_k(x)=\ell(x)+\textstyle\sum_{i=1}^{k-1} a_i\left( f(x_{i+1})+\langle \nabla f(x_{i+1}), x-x_{i+1}\rangle\right)+{C}\|x-x_0\|^3/6,$
therefore, $f_k(x)$ is a uniform convex function of degree 3 (modulus $C/12$). Thus, we have
\begin{align}\label{eqn:k-uniform-convexity}
f_k(x)\overset{\text{(c)}}{\geq}&   f_k^*+\tfrac{C}{12}\|x-\nu_k\|^3\
\overset{\mathcal{R}_k^1}{\geq}A_k f(x_k)+\textstyle\sum_{j=1}^k{A_{j}\tfrac{\left\|\nabla f(x_{j})\right\| ^{\frac{3}{2}}}{\sqrt{L_3(f)+M}}}+\tfrac{C\|x-\nu_k\|^3}{12},
\end{align}
where in (c), we used the uniform strong convexity of $f_k.$  Note that $\nu_k$ minimizes $f_k(x)$ defined in \Cref{alg:inner-loop}, and $\nabla f_k(\nu_k)=0.$
Substituting the lower bound for $f_k(x)$ into \eqref{eqn:lower-psi-optimal-k+1},
we have
\begin{align}\label{eqn:induction-R-1}
f_{k+1}^*
% &\overset{\eqref{eqn:psi_k_def}}{=}\min\limits_{x\in\mathbb{R}^n}\left\{ f_k(x)+a_k \left[ f(x_{k+1})+\langle \nabla f(x_{k+1}), x-x_{k+1}\rangle\right]\right\}\notag\\
&\overset{\eqref{eqn:k-uniform-convexity}}{\geq}\min\limits_{x\in\mathbb{R}^n}\left\{ A_k f(x_k)+{\textstyle\sum_{j=1}^kA_{j}\tfrac{\left\|\nabla f(x_{j})\right\| ^{\frac{3}{2}}}{\sqrt{L_3(f)+M}}}+\tfrac{C\|x-\nu_k\|^3}{12}\right.\notag\\
&\quad\left.+a_k\left[ f(x_{k+1})+\langle \nabla f(x_{k+1}), x-x_{k+1}\rangle\right]\right\}\notag\\
% &\overset{\text{(a)}}{\geq}\min\limits_{x\in\mathbb{R}^n}\{ A_{k+1} f(x_{k+1})+A_k\langle\nabla f(x_{k+1}), x_k-x_{k+1}\rangle\notag\\
% &\quad+\textstyle\sum_{j=1}^k{A_{j}\sqrt{\frac{1}{L+M}}\left\|\nabla f(x_{j})\right\| ^{\frac{3}{2}}}+\frac{C}{12}\|x-\nu_k\|^3+a_k\left[ \langle \nabla f(x_{k+1}), x-x_{k+1}\rangle\right]\notag\\
% &{=}\min\limits_{x\in\mathbb{R}^n}\left\{A_{k+1}f(x_{k+1})+\langle\nabla f(x_{k+1}), A_{k+1}y_k-a_k \nu_k-A_kx_{k+1}\rangle\right.\notag\\
% &\quad\left.+\textstyle\sum_{j=1}^k{A_{j}\sqrt{\tfrac{1}{L+M}}\left\|\nabla f(x_{j})\right\| ^{\frac{3}{2}}}+a_k\left[ \langle \nabla f(x_{k+1}), x-x_{k+1}\rangle\right]+\tfrac{C}{12}\|x-\nu_k\|^3\right\}\notag\\
&\overset{\text{(d)}}{\geq}A_{k+1}[f(x_{k+1})+\langle\nabla f(x_{k+1}), y_k-x_{k+1}\rangle]+\textstyle\sum_{j=1}^k{\tfrac{A_{j}\left\|\nabla f(x_{j})\right\| ^{\frac{3}{2}}}{\sqrt{L_3(f)+M}}}\notag\\
&\quad+\min\limits_{x\in\mathbb{R}^n}\left\{a_k\langle \nabla f(x_{k+1}), x-\nu_k\rangle+\tfrac{C}{12}\|x-\nu_k\|^3\right\},
\end{align}
where in (d), we used the convexity of $f$ and $A_{k+1}=A_k+a_k.$ 
By \cite[Lemma 4]{nesterov2006cubic}, \cite[Lemma 6]{nesterov2008accelerating},  if $M\ge 2L_3(f),$
we have
\begin{align}\label{eqn:lemma2-substute}
 \langle \nabla f(x_{k+1}), y_k-x_{k+1}\rangle\geq \sqrt{\tfrac{2}{L_3(f)+M}}\|\nabla f(x_{k+1})\|^{\frac{3}{2}}.
\end{align}
Furthermore, given that $a_k, C, M$ are chosen as in \Cref{lem:prop-cubic}, we have
\begin{align*}
    A_k&=A_{k-1}+a_k
    % =\textstyle\sum_{i=1}^k a_k
    =\tfrac{k(k+1)(k+2)}{6},\quad 
   a_k^{-\frac{3}{2}}A_{k+1}\geq \tfrac{2}{3},\quad \tfrac{1}{L_3(f)+M}\geq \tfrac{4}{C(\sqrt{2}-1)^2}.
\end{align*}
Therefore, we have
\begin{align}\label{eqn:intermedis-parameters-relation2}
&\tfrac{\left(\sqrt{2}-1\right)A_{k+1}\left\|\nabla f(x_{k+1})\right\| ^{\frac{3}{2}}}{\sqrt{L_3(f)+M}}{\geq}\tfrac{4a_k^{\frac{3}{2}}\left\|\nabla f(x_{k+1})\right\| ^{\frac{3}{2}}}{3\sqrt{C}}\notag\\
&\geq -\min\limits_{x\in\mathbb{R}^n}\left\{a_k \langle \nabla f(x_{k+1}), x-\nu_k\rangle+\tfrac{C\|x-\nu_k\|^3}{12}\right\}.
\end{align}
% Hence, if we choose $a_k, C$ such that
% \begin{align}\label{eqn:cubic-equation}
% &\left(\sqrt{2}-1\right)A_{k+1}\sqrt{\frac{1}{L+M}}\left\|\nabla f(x_{k+1})\right\| ^{\frac{3}{2}}\geq -\min\limits_{x\in\mathbb{R}^n}\left\{a_k \langle \nabla f(x_{k+1}), x-\nu_k\rangle+\frac{C}{12}\|x-\nu_k\|^3\right\},
% \end{align}
Substituting \eqref{eqn:lemma2-substute} and \eqref{eqn:intermedis-parameters-relation2} into \eqref{eqn:induction-R-1}, $\mathcal{R}_{k+1}^1$ holds.
% i.e., $$f_{k+1}^*{\geq}   A_{k+1}f(x_{k+1})+\textstyle\sum_{j=1}^kA_{j+1}\sqrt{\frac{1}{L+M}}\left\|\nabla f(x_{j+1})\right\| ^{\frac{3}{2}}.$$ 
Using $\mathcal{R}_{k}^1$ and $\mathcal{R}_{k}^2,$ we have
\begin{align*}
   & A_k f(x_k)+\textstyle\sum_{j=1}^kA_j\tfrac{\|\nabla f(x_j)\| ^{\frac{3}{2}}}{\sqrt{L_3(f)+M}}\overset{\text{$\mathcal{R}_{k}^1$}}{\leq} f_k^*\leq f_k(x)\notag\\
   &
   \overset{\text{$\mathcal{R}_{k}^2$}} {\leq}A_k f(x)+\left[\left(\tfrac{1}{2\sqrt{2}}+\tfrac{1}{6}\right)(L_3(f)+M)+\tfrac{C}{6}\right]\|x_0-x\|^3+\tfrac{\left\|\nabla f(x_1)\right\| ^{\frac{3}{2}}}{\sqrt{L_3(f)+M}}.
\end{align*}
Substituting the choice of $A_k, C, M,$ and $x=x^*$ concludes the proof.
\end{proof}   
For a general $p$-th-order accelerated tensor method, we can utilize \citep[Corollary~1]{nesterov2021implementable} and analyze a modified accelerated tensor method to derive a slow gradient complexity $\mathcal{O}(1/\varepsilon^{p})$, which satisfies \Cref{as:stronger-ANPE-tensor} and thus can be used as a subroutine in the high-order AR framework \Cref{alg:main-composite}. The proof follows a similar idea to \cite{nesterov2021implementable}, modifying the estimating sequence; it is also analogous to the accelerated CNM in how the gradient norm output is obtained, so we omit the details.

\end{appendix}
% \section*{Acknowledgments}
% \Yao{here}

\bibliographystyle{siamplain}
\bibliography{references}

\end{document}